\title{Quadratic lifespan for the sublinear $\alpha$-SQG sharp front problem}
\author{
	Riccardo Montalto
	\and
	Federico Murgante
	\and
	Stefano Scrobogna
}
\DeclareMathAlphabet{\mathcal}{OMS}{cmsy}{m}{n}
\DeclareFontFamily{U}{mathc}{}
\DeclareFontShape{U}{mathc}{m}{it}%
{<->x*[1.03] mathc10}{}
\DeclareMathAlphabet{\mathscr}{U}{mathc}{m}{it}
\DeclareMathAlphabet{\mathpzc}{OT1}{pzc}{m}{it}
\tikzset{cross/.style={cross out, draw=black, minimum size=2*(#1-\pgflinewidth), inner sep=0pt, outer sep=0pt},
cross/.default={1pt}}
  \DeclareSymbolFont{stix@largesymbols}{LS2}{stixex}{m}{n}
  \DeclareMathDelimiter{\lBrace}{\mathopen} {stix@largesymbols}{"E8}%
                                            {stix@largesymbols}{"0E}
  \DeclareMathDelimiter{\rBrace}{\mathclose}{stix@largesymbols}{"E9}%
                                            {stix@largesymbols}{"0F}
\DeclareSymbolFontAlphabet{\amsmathbb}{AMSb}%
\definecolor{dkgreen}{rgb}{0,0.6,0}
\definecolor{gray}{rgb}{0.5,0.5,0.5}
\definecolor{mauve}{rgb}{0.58,0,0.82}
\tiny\color{gray},
\def\maketag@@@#1{\hbox{\m@th\normalfont\normalsize#1}}
\newcommand{\la}{\langle}
\newcommand{\ra}{\rangle}
\newcommand{\x}{\xi}
\newcommand{\opbw}{{\rm Op}^{  BW}}
\newcommand{\Opbw}[1]{{\rm Op}^{BW}\bra{#1}}
\newcommand{\be}{\begin{equation}}
\newcommand{\ee}{\end{equation}}
\newcommand{\ov}{\overline}
\newcommand{\R}{\mathbb R}
\newcommand{\C}{\mathbb C}
\newcommand{\Z}{\mathbb Z}
\newcommand{\N}{\mathbb N}
\newcommand{\T}{\mathbb T}
\newcommand{\ii }{{\rm i} }
\newcommand{\vr}{\varrho}
\newcommand{\pv}{\mathrm{p.v.}}
\newcommand{\mL}{\mathcal{L}}
\newcommand{\mM}{\mathcal{M}}
\newcommand{\mR}{\mathcal{R}}
\newcommand{\mF}{\mathcal{F}}
\newcommand{\pa}{\partial}
\newcommand{\uno}{{\rm Id}}
\def\ba{\begin{aligned}}
\def\ea{\end{aligned}}
\def\beginm{\begin{multline}}
\def\endm{\end{multline}}
\newcommand{\sgn}{{\rm sgn}}
\newcommand{\dd}{\textnormal{d}}
\newcommand{\pare}[1]{\left( #1 \right)}
\newcommand{\angles}[1]{\left\langle #1 \right\rangle}
\newcommand{\norm}[1]{\left\| #1 \right\|}
\newcommand{\av}[1]{\left| #1 \right|}
\newcommand{\bra}[1]{\left[ #1 \right]}
\newcommand{\pbra}[2]{\set{ #1 \big. , \  #2 }}
\newcommand{\comm}[2]{\left\llbracket #1 \,  , \,   #2 \right\rrbracket}
\newcommand{\set}[1]{\left\{ #1 \right\}}
\newcommand{\system}[1]{\left\{ #1 \right.}
\newcommand{\OpBW}[1]{\textnormal{Op}^{BW}\bra{#1}}
\newcommand{\ddt}{\frac{\textnormal{d}}{\textnormal{d}t}}
\newcommand{\Id}{\textnormal{Id}}
\newcommand{\defeq}{\vcentcolon=}
\newcommand{\Ball}[2]{B^{#1}_{#2}\pare{I;\epsilon_0}}
\def\fint{\mathop{\,\rlap{--}\!\!\!\int}\nolimits}
\newcommand{\vect}[2]{ \pare{ \begin{array}{c} #1 \\ #2 \end{array} } }
\def\fiint{\mathop{\,\rlap{---}\!\!\!\iint}\nolimits}
\renewcommand{\Re}{\textnormal{Re}}
\newcommand{\RN}[1]{%
  \textup{\uppercase\expandafter{\romannumeral#1}}%
}
\newcommand{\cC}{\mathcal{C}}
\newcommand{\cL}{\mathcal{L}}
\newcommand{\cF}{\mathcal{F}}
\newcommand{\cR}{\mathcal{R}}
\newcommand{\bR}{\mathbb{R}}
\newcommand{\bN}{\mathbb{N}}
\newcommand{\fV}{\mathsf{V}}
\newcommand{\cM}{\mathcal{M}}
\newcommand{\bZ}{\mathbb{Z}}
\newcommand{\bC}{\mathbb{C}}
\newcommand{\bT}{\mathbb{T}}
\newcommand{\bV}{\mathbb{V}}
\newcommand{\st}{\mathsf{t}}
\newcommand{\psc}[2]{\left\langle  #1 \ \middle\vert \ #2 \right\rangle}
\newcommand{\di}{{\rm d}}
\def\wt{\tilde}
\newcommand{\Gt}[2]{{\tilde{\Gamma}^{#1}_{#2}}}
\newcommand{\Lcal}{{\mathcal L}}
\theoremstyle{theorem}
\newtheorem{theorem}{Theorem}[section]
\newtheorem*{theorem*}{Theorem}
\newtheorem{prop}[theorem]{Proposition}
\newtheorem{proposition}[theorem]{Proposition}
\newtheorem{lemma}[theorem]{Lemma}
\theoremstyle{definition}
\newtheorem{definition}[theorem]{Definition}
\newtheorem{rem}[theorem]{Remark}
\newtheorem{remark}[theorem]{Remark}
\newtheorem{remarks}[theorem]{Remark}
\newtheorem{notation}[theorem]{Notation}
\numberwithin{equation}{section}
\newcommand{\Int}[1]{\int_{\mathbb{T}}#1\,\dd x}
\begin{document}

\maketitle

\begin{abstract}
In this paper we consider the generalized surface
quasi-geostrophic $\alpha$-SQG equations, in the "sublinear regime" $\alpha \in (0, 1)$ and we study the stability of vortex patches close to vortex discs. We shall prove that for regular, Sobolev initial vortex patches $\varepsilon$-close to a vortex disc, the solutions stay $\varepsilon$-close to a vortex disc for a time interval of order $O(\varepsilon^{- 2})$. The proof is based on a paradifferential Birkhoff normal form reduction, implemented in the case where the dispersion relation is sublinear. 
\end{abstract}
{\sc Key words:} Fluid-Mechanics, $\alpha$-SQG, Vortex Patches, Quasi-Linear Birkhoff Normal Form

	{\small\tableofcontents}

	\allowdisplaybreaks

\section{Presentation of the problem}


In this paper we consider the generalized surface
quasi-geostrophic $\alpha$-SQG equations
\begin{equation}\label{transport}
\system{
\begin{aligned}
&\partial_t \theta(t,\zeta)+u(t,\zeta)\cdot \nabla \theta(t,\zeta)=0 \, ,
&&
 \pare{t,\zeta}\in \bR \times  \bR^2 \\
& \theta\pare{0, \zeta}= \theta_0\pare{\zeta}
\end{aligned}
} \ , 
\end{equation}
with  velocity field
\begin{equation}\label{gBS}
u \defeq\nabla^\perp \av{D}^{-2+\alpha}\theta \, , \qquad  \av{D} \defeq(-\Delta)^\frac12 \, ,
\qquad  \alpha\in [0,2) \, .
\end{equation}
These class of
active scalar  equations have been introduced in   \cite{PHS94,CFMR05} and,
for  $ \alpha \to 0 $,
formally reduce  to the 2D-Euler equation in vorticity formulation
(in this case $\theta$ is the vorticity of the fluid).
The case $ \alpha = 1 $
is the surface quasi-geostrophic (SQG) equation
in \cite{CMT1994_2} which models the evolution of 
the temperature $ 	\theta $ for atmospheric and oceanic flows. \\

We are interested in a particular class of solutions in which the initial data $ \theta_0 $ in \eqref{transport} is a characteristic function of a sufficiently regular bounded subset $ D\pare{0}\subset \bR^2 $, i.e. 
\begin{equation}
\label{eq:initial_patch}
\theta_0\pare{\zeta} = \mathbb{1}_{D\pare{0}}\pare{\zeta} .
\end{equation}
In the case $ \alpha =0 $ the celebrated Yudovich theorem \cite{Yudovich1963} guarantees us that there exists a unique global weak solution to \eqref{transport} which admits a well-defined generalized Lagrangian flow associated to velocity $ u $.
Yudovich' theorem, though, says nothing on the persistence of regularity of the boundary of the deformed domain $ D\pare{t} $. The persistence of the regularity of the patch has been settled later on in the important works\cite{Chemin1993} and \cite{BC1993}.

In the case $ \alpha > 0 $ there is no analogous of the Yudovich theorem, and the system composed of  \cref{transport,eq:initial_patch} make sense only as a {\it Contour Dynamic Equation} (cf. \cite[Section 8.3.1]{MB2002} for the case $ \alpha=0 $), hence if $ \partial D\pare{t} $ is paramentrized by the curve $ \pare{ t, x }\in \bra{0, T}\times \bT \mapsto X\pare{t, x}=\vect{X_1\pare{t, x}}{X_2\pare{t, x}}\in \bR^2 $
 the evolution of the interface is given by evolutionary system
\begin{align}\label{eq:SQGpatch}
	\begin{aligned}
		X_t\pare{t, x} & = S\bra{X , X}\pare{t,x} \\
		S\bra{p, q}\pare{x} &= \frac{c_\alpha}{2\pi} \  \pv \int \frac{p '\pare{x } - q'\pare{y}}{\av{p \pare{x} - q \pare{y}}^{\alpha}} \  \dd y
	\end{aligned} \ ,
	&&
	\pare{t,x}\in\bR\times \bT , \ \alpha\in\pare{0, 1}, 
\end{align}
where, denoting  $\Gamma $  the Euler-Gamma function, 
\begin{equation}\label{eq:calpha}
	c_\alpha = \frac{\Gamma\pare{\frac{\alpha}{2}}}{2^{1-\alpha} \Gamma\pare{1-\frac{\alpha}{2}}} \, .
\end{equation}
\Cref{eq:SQGpatch} describes the evolution of the $ \alpha $-SQG-sharp front problem. It is well-known (cf. \cite{BCGS2023,HHM2021}) that for any $ R > 0 $ the profile
\begin{equation*}
	R \  \vec \gamma\pare{x} \defeq R \vect{\cos x}{\sin x} , 
\end{equation*}
is a stationary solution of \cref{eq:SQGpatch} known as the {\it Rankine vortex}. Analogously as to \cite{BCGS2023} we thus study the evolution of a patch whose profile is a radial perturbation of the Rankine vortex, thus transforming the vector-valued system \cref{eq:SQGpatch} into the scalar evolution equation (see \cite[Section 1]{BCGS2023} for more detailed computations)
\begin{equation}
	\label{eq:elev_funct_eq}
	\begin{aligned}
		- \pare{1+h\pare{x}} h_t\pare{x} = & \ S\bra{\pare{1+h}\gamma, \pare{1+h}\gamma}\pare{x} \cdot \pare{ - \pare{1+h\pare{x}}\gamma\pare{x} + h'\pare{x}\gamma'\pare{x}} \\
		=  & \  \frac{c_\alpha}{2\pi} \   \int \frac{ \cos\pare{x-y}\bra{\pare{1+h\pare{x}}h'\pare{y} - \pare{1+h\pare{y}}h'\pare{x}}   }{\bra{ \pare{1+h\pare{x}}^2 + \pare{1+h\pare{y}}^2-2\pare{1+h\pare{x}}\pare{1+h\pare{y}}\cos\pare{x-y}}^{\frac{ \alpha }{2}}} \dd y \\
		& \ +  \frac{c_\alpha}{2\pi} \   \int \frac{ \sin\pare{x-y}\bra{\pare{1+h\pare{x}}\pare{ 1+ h\pare{y} } + h'\pare{x} h'\pare{y}} }{\bra{ \pare{1+h\pare{x}}^2 + \pare{1+h\pare{y}}^2-2\pare{1+h\pare{x}}\pare{1+h\pare{y}}\cos\pare{x-y}}^{\frac{ \alpha }{2}}} \dd y \, .
	\end{aligned}
\end{equation}

\subsection{Results of the manuscript}

The local well-posedness issue for the \Cref{eq:SQGpatch} has been solved in \cite{Gancedo2008} and recently improved in \cite{GP2021} and assures us that initial data which are $ H^2 $ generate local-in-time solutions of \cref{eq:SQGpatch}. This indeed can be translated to \cref{eq:elev_funct_eq} insuring that if $ h_0\in H^2 $ than there exists a $ T > 0 $
and a unique solution $
h\in\cC\pare{\bra{0, T}; H^2},
$
of \cref{eq:elev_funct_eq}.

The aim of the present manuscript is to extend the local-existence result stated above via quasi-linear normal form methods such as in \cite{BD2018,IT2017,BCGS2023,BMM2022,BFP2018}, the result we prove is the following one:

\begin{theorem}[Quadratic life-span]\label{thm:main}
	Let $ \alpha \in \pare{0, 1} $.
	There exists  $ s_0 > 0 $ such that for any $ s \geq s_0 $,
	there are $ \varepsilon_0  > 0 $, $ c_{s,\alpha} > 0 $,
	$ C_{s,\alpha} > 0 $  such that, for any
	$ h_0 $ in $ H^s \pare{ \bT ; \bR }$ satisfying  $ \norm{h_0}_{H^s} \leq
	\varepsilon < \varepsilon_0  $,
	the 
	equation \eqref{eq:elev_funct_eq} with initial condition $ h(0) = h_0 $
	has a unique classical solution 
	\begin{equation}\label{timeexi}
		h\in\cC\pare{\bra{- T_{s, \alpha} , T_{s, \alpha}}; H^s\pare{\bT;\bR}} \qquad \text{with} \qquad
		T_{s, \alpha}  > c_{s,\alpha} \varepsilon^{-2}  \, , 
	\end{equation}
	satisfying 
	$ \norm{h\pare{t}}_{H^s} \leq C_{s, \alpha} \, \varepsilon  $, for any 
	$  t\in\bra{-T_{s, \alpha}, T_{s, \alpha}}  $.
\end{theorem}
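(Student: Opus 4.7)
The overall strategy is a paradifferential Birkhoff normal form reduction, in the spirit of \cite{BD2018,BMM2022,BCGS2023}, adapted to the sublinearity $\alpha\in(0,1)$ of the dispersion. First I would paralinearize \eqref{eq:elev_funct_eq}: using Bony's decomposition to expand the two singular integrals on the right-hand side, one rewrites the equation as
\begin{equation*}
\partial_t h = \mathrm{i}\,\opbw\bigl(\Omega(h;x,\xi)\bigr) h + \cQ(h,h) + \cR_{\geq 3}(h),
\end{equation*}
where $\Omega(h;x,\xi)=\Omega_{\mathrm{lin}}(\xi)+a(h;x,\xi)$ is a real symbol whose $h$-independent part $\Omega_{\mathrm{lin}}$ encodes the Kelvin dispersion around the Rankine vortex, $\cQ$ is a homogeneous-of-degree-two paradifferential remainder of strictly lower order than the principal part, and $\cR_{\geq 3}$ collects cubic-and-higher contributions in $h$. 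This step is essentially bookkeeping but requires care in identifying the subprincipal symbol $a(h;x,\xi)$ and in verifying the gain of derivatives in $\cQ$.

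The second step is the analysis of the linearized operator. On the subspace of zero-mean functions, which is preserved by the flow by conservation of the enclosed area, $\opbw(\Omega_{\mathrm{lin}}(\xi))$ is diagonal in Fourier with Kelvin eigenvalues $\Omega_n$ that can be written explicitly in terms of Gamma functions and behave like $|n|^{\alpha}$ as $|n|\to\infty$. To set up the Birkhoff step I would check the three-wave non-resonance condition
\begin{equation*}
\bigl|\Omega_{n_1}+\Omega_{n_2}-\Omega_{n_3}\bigr|\geq c>0 \qquad \textrm{whenever} \qquad n_1+n_2=n_3,
\end{equation*}
modulo the trivial resonances forced by the rotational invariance of the problem. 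In the sublinear regime $\alpha<1$ the strict concavity of $n\mapsto|n|^{\alpha}$ is the structural input that keeps these denominators uniformly bounded from below, so that no small-divisor issue arises.

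Next I construct the normal form transformation as the time-$1$ flow $\Phi$ of a paradifferential equation $\partial_\tau\Phi=\mathrm{i}\,\opbw(b(h;x,\xi))\Phi$, choosing the generator $b$ so as to cancel the non-resonant quadratic part of $\cQ$ via the homological equation. The non-resonance estimate of the previous step makes this equation solvable and yields $b$ as a symbol of order $\alpha$, homogeneous of degree one in $h$. The sublinearity $\alpha<1$ is crucial here: it ensures that $b$ has order strictly less than one, so that $\Phi$ is a bounded, invertible $\cO(\varepsilon)$-perturbation of the identity on $H^s$. In the new unknown $w=\Phi(h)$ the equation reads
\begin{equation*}
\partial_t w = \mathrm{i}\,\opbw\bigl(\widetilde\Omega(w;x,\xi)\bigr) w + \widetilde\cR_{\geq 3}(w),
\end{equation*}
with $\widetilde\Omega$ still real (the decisive point, inherited from the reality/Hamiltonian structure of \eqref{eq:SQGpatch}) and $\widetilde\cR_{\geq 3}$ at least cubic and bounded on $H^s$. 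A standard symmetrization/paradifferential energy estimate then yields $\tfrac{\mathrm{d}}{\mathrm{d}t}\|w(t)\|_{H^s}^2\lesssim \|w(t)\|_{H^s}^4$, from which \eqref{timeexi} and the uniform $H^s$-bound follow by a standard bootstrap argument, using that $\Phi$ is $\cO(\varepsilon)$-close to the identity on $H^s$.

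Two main obstacles are expected. First, the paralinearization itself: the integrand in \eqref{eq:elev_funct_eq} is the reciprocal of a fractional power of an explicit polynomial expression in $(h(x),h(y))$, so extracting a clean paradifferential symbolic expansion and controlling the order of each remainder through paracomposition requires substantial paradifferential-calculus bookkeeping. Second, the explicit verification of the three-wave non-resonance condition in the range $\alpha\in(0,1)$: the Kelvin eigenvalues are given by ratios of Gamma functions and, while concavity provides the right heuristic, ruling out accidental integer solutions of $\Omega_{n_1}+\Omega_{n_2}=\Omega_{n_3}$ beyond the symmetry-forced ones is a delicate but finite arithmetic check. Once these two inputs are secured, the Birkhoff reduction and the energy estimate proceed along the by-now classical lines of \cite{BD2018,BCGS2023}.
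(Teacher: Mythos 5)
Your high-level plan (paralinearize, use the non-resonance of the Kelvin frequencies, conjugate by a paradifferential flow, then do a quartic energy estimate) matches the paper's architecture, but the central step — a \emph{single} normal form transformation $\Phi$ whose generator solves one homological equation and removes all quadratic terms — does not work in the sublinear regime, and this is precisely the difficulty the paper is organized around. If you try to cancel a quadratic paradifferential term of order $m$ (and the transport term $V(f;x)\xi$ has $m=1$) through the commutator with the dispersion, the homological equation forces the generator to have order $m+(1-\alpha)>m$, giving an unbounded transformation; if instead you use the mechanism actually available for $\alpha<1$, namely cancelling against $\partial_t g$ by solving $g(-\ii\dot\omega_\alpha(D)f;x,\xi)+r(f;x,\xi)=0$ (which only divides the coefficients by $\dot\omega_\alpha(j)$ and keeps the order of $g$ equal to that of $r$), then the commutator $\comm{\dot\omega_\alpha(D)}{\opbw(g)}$ reappears as a \emph{new} quadratic term of order $m-(1-\alpha)$, which is lower but not negligible. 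One conjugation therefore does not produce an equation with only cubic remainders: the paper needs $N\gtrsim \varrho/(1-\alpha)$ iterated conjugations (a first paracomposition-type flow for the order-one transport symbol, then a descent in order by steps of $1-\alpha$), and this $\alpha$-dependent iteration count is the genuinely new feature compared with the superlinear case. Your claim that ``$b$ has order $\alpha<1$, so $\Phi$ is a bounded $\cO(\varepsilon)$-perturbation of the identity'' is the superlinear-case logic and is where the argument breaks.

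A second, related gap: even after all paradifferential conjugations, a quadratic contribution survives in the \emph{smoothing} remainder $R_1(f)f$, which is not of paradifferential form and cannot be touched by flows generated by symbols. The paper removes it with a separate Birkhoff step, conjugating by the flow of a $1$-homogeneous smoothing operator $Q(f)$ whose coefficients are $R$-coefficients divided by $\dot\omega_\alpha(j)-\dot\omega_\alpha(k)-\dot\omega_\alpha(j-k)$; it is \emph{only here} that the three-wave non-resonance condition is used (the paradifferential reduction needs only $\dot\omega_\alpha(j)\neq 0$). Your proposal conflates these two roles. Finally, two smaller points: the non-resonance bound $\av{\omega_\alpha(k)-\omega_\alpha(j)-\omega_\alpha(n)}\geq\omega_\alpha(2)>0$ holds uniformly for \emph{all} nonzero $n,j,k$ with $k=j+n$ as a consequence of convexity alone — there are no symmetry-forced resonances to quotient out and no arithmetic check of ``accidental'' solutions is required; and the paper first passes to the Hamiltonian unknown $f=h+h^2/2$, which is the variable in which the average is conserved and the paralinearization of \cite{BCGS2023} applies.
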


\paragraph{Ideas of the proof}
Here we collect the main ingredients of the proof of Theorem \ref{thm:main}.

\subparagraph{Energy estimate} The result in Theorem \ref{thm:main} is a consequence of an energy estimate of the form 
\be\label{energy}
\| h(t)\|_{H^s}^2\lesssim_s \| h(0)\|_{H^s}^2 +\int_0^t\| h(\tau)\|_{H^{s_0}}^2\|h(\tau)\|_{H^s}^2\, \di \tau, \quad s\geq s_0 \gg 1 
\ee
which, if $\|h(0)\|_s\leq \varepsilon$, allows to control the norm $\|h(t)\|_s$ of the solution on the claimed time scale \eqref{timeexi}.
 A quartic energy estimates of the form \eqref{energy} is possible only if there are no quadratic (in $ h $) contributions in \eqref{eq:elev_funct_eq}. The equation \eqref{eq:elev_funct_eq} indeed exhibits nontrivial quadratic contributions, in stark contrast to the scenarios involving perturbations of the half-space, as delineated in \cite[Eq. (1.4)]{CGI2019}. To remove the quadratic  contribution in the energy estimate a classical approach is to implement a normal form transformation. However, since the equation \eqref{eq:elev_funct_eq} is quasi-linear, the direct normal form approach gives an unbounded transformation, rendering it ineffective for energy estimates due to the associated loss of derivative. Consequently, the main effort in the manuscript is the construction of a novel nonlinear transformation, denoted as $ \Psi_0 $, which maps  the Sobolev spaces $ H^s $ onto itself for all $ s\geq s_0\gg 1 $. The primary utility of this transformation is to redefine the variable of interest, $ h $, as $ g\defeq \Psi_0 h $. Consequently, $ g $ satisfies a newly formulated nonlinear evolution equation, notable for its absence of quadratic contributions, a feature imperative for the derivation of \eqref{energy}.
We construct $ \Psi_0 $ using  a para-differential normal form approach for quasi-linear PDEs which we explain below.

\subparagraph{The Hamiltonian unknown} Defining $ f\defeq h + \frac{h^2}{2} $ allows us to transform \eqref{eq:elev_funct_eq} into an Hamiltonian quasi-linear PDE (cf. \eqref{eq:SQG_Hamiltonian})
\be \label{eq:SQG_Hamitonian_intro}
f_t = \partial_x \nabla H \pare{f}, 
\ee
where the explicit for of the Hamiltonian is given in \eqref{eq:Hamiltonian}.
 Notice that the transformation $ h\mapsto f $ is bounded and invertible in any Sobolev space and  was first introduced in \cite{BHM2022,HHM2021}. It is important to notice that the equation \eqref{eq:elev_funct_eq} does not preserve the average of the unknown $ h $, while such preservation is immediate for the evolution equation in the unknown $ f $, given in \eqref{eq:SQG_Hamitonian_intro}. There is a strict connection between preservation of the average of $ f $ and incompressibility of the flow \eqref{gBS}, we refer the interested reader to the introduction of \cite{BCGS2023}.

\subparagraph{The linearized equation} To study small solutions we first linearize around $f=0$, obtaining 
\be\label{lineare}
 f_t = \bV_\alpha \partial_x f - \ii \dot \omega(D) f,\quad \bV_\alpha\in \R, \quad  \dot \omega(\xi)\sim c_\alpha | \xi|^{\alpha -1} \xi.
\ee
Notice that since $ f=h + h^2/2 $ the linearization of \cref{eq:elev_funct_eq,eq:SQG_Hamitonian_intro} around zero are the same. 
The constant transport operator $\bV_\alpha \partial_x$ introduce a uniform translation (which correspond to a rotation of the patch). In view of the translation invariance of the problem (see \cref{eq:tr_invariance_Hamiltonian}), it does not play any role in our stability analysis. On the contrary the Fourier multiplier $\ii \dot \omega(D)$  is a dispersive operator whose symbol is the dispersion relation of the equation and play a crucial role in our paper. In other words, we can express the solutions of the linearized equation  \eqref{lineare} in terms of the moving variable $v(t,x):=  f(t, x-\bV_\alpha t )$  using Fourier decomposition, as 
\be
v(t,x)= \sum_{j \in \Z\setminus \{0\}} v_j e^{\ii jx - \ii \dot \omega(j) t},
\ee
thus suppressing the linear transport contribution in \eqref{lineare}
\subparagraph{Property of the dispersion relation} The explicit expression of  $\dot \omega \pare{j}$ is quite involved (see \eqref{omeghino}). It is so remarkable that, to implement the normal form approach, only   two basic property are needed:
\begin{enumerate}
\item {\bf Absence of three wave interactions:} For any $j,k\in \Z$ we have
\be \label{eq:absence3wave}
\av{\dot \omega\pare{ j+k }-\dot \omega\pare{j}-\dot \omega\pare{k}}\geq c>0.
\ee
The inequality in \eqref{eq:absence3wave} was first proved in \cite[Lemma 3.5]{BCGS2023} whose proof, notably, relies only on the convexity of the disperion relation. The lower bound in \eqref{eq:absence3wave} is the most basic property in normal form theory and exclude possible resonant phenomena;

\item {\bf Pseudo-differential property:} the function $ \xi\in \R\mapsto \dot \omega(\xi)$ is a Fourier multiplier of order $\alpha \in (0,1)$ according to Definition \ref{def:Fourier_mult}, namely 
$$
\av{\partial_\xi^\beta \dot \omega(\xi)} \lesssim |\xi|^{\alpha-\beta } \quad \text{for any } \beta \in \N.
$$ 
This property is proved in \cite[Lemma 3.1]{BCGS2023} and 
is used in the  para-differential normal form reduction (see the paragraph below) needed to compensate the loss of derivatives due to the quasi-linear nature of \eqref{eq:elev_funct_eq}.
\end{enumerate}

\subparagraph{Para-differential normal form}
The construction of the normal form transformation $\Psi_0$ follows three conceptual steps:
\begin{enumerate}

\item {\bf Paralinearization:} The first step in our analysis is the para-linearization of the Hamiltonian equation \eqref{eq:SQG_Hamitonian_intro} obtaining an equation of the form
\begin{equation}
	\label{eq:paralinearized_2_intro}
	f_t +  \ii \  \OpBW{ V\pare{f; x} \ \xi +\pare{ 1+\nu\pare{f; x} }  \dot \omega_\alpha \pare{\xi} +  r\pare{f; x, \xi}  }  \  f \\
	= R\pare{f} f   ,
\end{equation}
where $ V, \nu $ and $ \dot \omega $ are real, $ r $ is a symbol of order at most zero and $ R\pare{f} $ is an regularizing operator (cf. \Cref{def:symbols,def:smoothing}). 
 This was first performed in \cite[Theorem 4.1]{BCGS2023} and is here stated in \Cref{prop:paralinearization_1};

\item {\bf Reduction of quadratic paradifferential terms} up to smoothing remainder: this step is performed in \Cref{prop61} and is achieved via iterated conjugation by bounded flows generated by 1-homogeneous paradifferential operators. Since  $\alpha \in (0,1)$ the para-differential reduction that we perform is completely different with respect to the one performed in \cite{BCGS2023} for the superlinear case $\alpha \in (1,2)$. Instead  we perform a sub-linear reduction in the spirit of  \cite{BFP2018}. We describe roughly how the generic normal form step works. If one has to normalize a quadratic term of the form $\OpBW{r(f ; x, \xi)}[u]$ where $r(f; x, \xi) = \sum_{j \neq 0} r_j(\xi)  f_j e^{\ii j x}$ is a simbol of order $m \leq 1$. Then one looks for a change of variables of the form $v = \Phi(f)[f]$ where $\Phi(f)$ is the time one flow map associated to the linear vector field $\ii \OpBW{g(f; x, \xi)}$ where $g(f; x, \xi) = \sum_{j \neq 0} g_j(\xi) f_j e^{\ii j x}$ is a symbol of order $m$. Since $\dot \omega_\alpha(\xi)$ is a symbol of order $\alpha < 1$, the commutator $\Big[\dot \omega_\alpha(D)\,,\, \OpBW{g} \Big]$ is of order $m -(1 -  \alpha ) < m$ and hence, in the new coordinates, the only quadratic term of order $m$ is given by 
$$
\OpBW{\partial_t g(f; x, \xi) + r(f; x, \xi)}[v] = \OpBW{g(\partial_t f; x, \xi) + r(f; x, \xi)}[v]\,.
$$
If $f$ solves \eqref{eq:paralinearized_2_intro}, then $\partial_t f = - \ii \dot \omega_\alpha(D) f + O(f^2)$, then we choose $g$ in such a way that 
$$
g(- \ii \omega_\alpha(D) f; x, \xi) + r(f; x, \xi) = 0
$$
which can be done by defining 
$$
g(f; x, \xi) = \sum_{j \neq 0} \frac{r_j(\xi)}{\ii \dot \omega_\alpha(j)}f_j e^{\ii j x}\,. 
$$
Contrarily to \cite{BFP2018} in the present manuscript we have to take as well into consideration the fact that the number of iterated conjugations is not fixed, but depends inversely on the quantity $ 1-\alpha  $ thus invalidating the procedure in the endpoint case $ \alpha=1 $. The conjugation procedure allows us to cancel the 1-homogeneous (in $ f $)  terms  in the symbol $ V\pare{f; x} \ \xi +\pare{ 1+\nu\pare{f; x} }  \dot \omega_\alpha \pare{\xi} +  r\pare{f; x, \xi} $ in \eqref{eq:paralinearized_2_intro}, as a result, the residual bilinear contribution emanates solely from $R\pare{f}$;

\item {\bf Birkhoff Normal Form on the smoothing term:} In this last step we need to cancel the only remaining quadratic interactions, the ones stemming from the smoothing term $ R\pare{f}$: this is the content of Proposition \ref{prop:BNF}. Again, such result is achieved via a conjugation of a linear flow, $\Psi_{\mathtt{bir}}(f)$, but this time generated by a 1-homogeneous smoothing operator $Q(f)$ (cf. \Cref{def:smoothing}). To cancel the quadratic contribution in $R(f)$ the operator $Q(f)$ have to solve the homological equation 
$$
Q \pare{ -\ii \omega_\alpha (D) f} + \comm{Q \pare{ f} }{ -\ii \omega_\alpha (D) } + R_1\pare{f}=0
$$
which is solvable thanks to the non-resonance condition  \eqref{eq:absence3wave}. The loss of derivatives introduced by the quasi-linear nature of the equation is compensated by the regularizing property of the generator.
\end{enumerate}

\paragraph{Overview of the literature}
 \Cref{thm:main} builds upon several important results that have been proved in the past two decades in the context of quasi-linear hyperbolic PDEs. We divide such references thematically:

\subparagraph{Foundational results} The first local existence theory for \cref{eq:SQGpatch} has been proved in \cite{Rodrigo2005} in a $ \cC^\infty $ setting and in \cite{Gancedo2008} for finite regularity, which was lets extended in \cite{CCCG2012}, we refer as well to \cite{KYZ17}. The formation and avoidance of singularities is a very active research field which has seen several improvement in the recent past, cf. \cite{GS14,KRLY16,HK21,KL21,GP2021}. 

\subparagraph{Traveling waves} The existence of traveling waves by means of bifurcation methods has been extensively used in order to produce a large number of uniformly rotating structures, such as in \cite{HH2015,CCG2016_1,CCG2016_2,CCG2019,GomezSerrano2019,CCG2020,DHH2018, Renault2017, HM2017, HM2016_1, HM2016_2, DHMV2016, DHH2016, HMV2015, HMV2013}. We refer to the introductions in 
\cite{HHM2021,GIP} for more references.

\subparagraph{Quasi-periodic solutions} Very recently quasi-periodic solutions for several patches configurations have been constructed in \cite{BHM2022,GIP,HHM2021,HR2021,HHR2023,Roulley2022,HR2022,Roulley2022_2
} using techniques first developed in the context of the water-waves equations in \cite{BM2020, BBHM2018, FG2020,BFM2021,BFM2021_2}. We refer as well to \cite{FMM2023,BM2021,FM2022} for KAM results for  2D and 3D Euler equations.

\subparagraph{Normal forms for quasi-linear systems} The application of normal forms energy methods has been widely used in the past 10 years in order to extend lifespans for solutions of the water-waves and other hydro-dynamical systems in \cite{Wu2020,IP2015,BFP2018,BD2018,BFF2021,BMM2021,FIM,BMM2022,BMM2,AD2015_book,IP2019,Zheng2022,DIP2022,CCZ2021,IT2017}. Recently it was proved in \cite{BCGS2023} the first normal-forms result for $ \alpha $-patches in the case $ \alpha\in\pare{1, 2} $.

\subparagraph{Further literature} We mention that in the setting in which the patch is a small perturbation of the half plane it is possible to construct global-in-time solutions,such as in \cite{AA2022,AA2023,CGI2019,HSZ2021},  using dispersive techniques that {\it cannot} be applied in the periodic setting. We want to highlight  that in the half-plane patch setting, contrary to  \cref{eq:elev_funct_eq},
the nonlinear equation governing the perturbation of the half-plane lacks the quadratic component (cf. \cite[Eq. (1.4)]{CGI2019}). 

%

\bigskip

\noindent
{\bf Acknowledgements} R. Montalto and F. Murgante are supported by the ERC STARTING GRANT 2021 ``Hamiltonian Dynamics, Normal Forms and Water Waves" (HamDyWWa), Project Number: 101039762. Views and opinions expressed are however those of the authors only and do not necessarily reflect those of the European Union or the European Research Council. Neither the European Union nor the granting authority can be held responsible for them.

\noindent
R. Montalto and S. Scrobogna are supported by PRIN 2022 ``Turbulent effects vs Stability in Equations from Oceanography" (TESEO), project number: 2022HSSYPN. 

\noindent
R. Montalto is supported by INDAM-GNFM. F. Murgante and S. Scrobogna are supported by INDAM-GNAMPA.

\section{Background on the SQG patch problem as a contour equation}

\subsection{The Hamiltonian formalism}

As explained in detail in \cite[Section 3]{BCGS2023} the unknown $ h $ of \cref{eq:SQGpatch} is {\it not} the good unknown in order to study the problem at hand. Setting in fact
\begin{equation}\label{eq:def_f}
	f\pare{t, x} \defeq  h\pare{t, x} + \frac{h^2\pare{t, x}}{2}  ,
\end{equation}
we can recast \cref{eq:SQGpatch} as a Hamiltonian evolution equation.
Let us  denote with $ E_\alpha = E_\alpha \pare{ f },  $ the {\it pseudo-energy} of the patch defined as
\begin{equation}
	\label{eq:Hamiltonian}
	E_\alpha \pare{t} \defeq \\
	- \frac{1}{2}  \fiint \frac{\partial_{xy} \bra{ \sqrt{1+2f\pare{t, x}} \ \sqrt{1+2f\pare{t, y}} \cos\pare{x-y} }}{\bra{ 1+2f\pare{x} + 1+2f\pare{y} -2\sqrt{1+2f\pare{x}}\sqrt{1+2f\pare{y}} \cos\pare{x-y}}^{-\frac{\alpha}{2} } } \dd y \  \dd x .
\end{equation}

The following result is proved in \cite[Proposition 2.1]{HHM2021}:

\begin{prop}
	Let $ \alpha\in\pare{0, 2} $.
	If $ h $ is a solution of  \cref{eq:elev_funct_eq} then the variable $ f $ defined  in \eqref{eq:def_f}  solves the Hamiltonian equation
	\begin{equation}\label{eq:SQG_Hamiltonian0}
		f_t = \partial_x \nabla E_{\alpha} \pare{f}
	\end{equation}
	where  $ E_\alpha \pare{ f } $ is the {\it pseudo-energy} of the patch
	\begin{equation*}
		E_\alpha\pare{f} \defeq \\
		- \frac{c_\alpha}{8\pare{1-\frac{\alpha}{2}}^2}  \fiint \frac{\partial_{xy} \bra{ \sqrt{1+2f\pare{ x}} \ \sqrt{1+2f\pare{y}} \cos\pare{x-y} }}{\bra{ 1+2f\pare{x} + 1+2f\pare{y} -2\sqrt{1+2f\pare{x}}\sqrt{1+2f\pare{y}} \cos\pare{x-y}}^{\frac{\alpha}{2} -1} } \dd y \  \dd x
	\end{equation*}
	with $ c_\alpha $ defined in     \eqref{eq:calpha},
	and its $ L^2 $-gradient $ \nabla E_\alpha \pare{f} $ is
	\begin{equation}
		\label{eq:gradient-pseudoenergy}
		\nabla E_\alpha \pare{f} \\ = \frac{c_\alpha}{2\pare{1-\frac{\alpha}{2}}}  \fint \frac{1+2f\pare{y} +\sqrt{1+2f\pare{x}} \ \partial_y \bra{\sqrt{1+2f\pare{y}}\sin\pare{x-y}}  }{\bra{ 1+2f\pare{x} + 1+2f\pare{y} -2\sqrt{1+2f\pare{x}}\sqrt{1+2f\pare{y}} \cos\pare{x-y}}^{\frac{\alpha}{2}} } \ \dd y
		\, .
	\end{equation}
\end{prop}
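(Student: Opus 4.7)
The key algebraic identity is $1+2f = (1+h)^2$, which is equivalent to $f = h + h^2/2$. In the regime $1+h > 0$ this yields $\sqrt{1+2f(x)} = 1+h(x)$ and $f_t = (1+h)h_t$, so the left-hand side of \eqref{eq:elev_funct_eq} is exactly $-f_t$. Moreover the denominator $1+2f(x) + 1+2f(y) - 2\sqrt{1+2f(x)}\sqrt{1+2f(y)}\cos(x-y)$ equals the squared chord length $|X(x)-X(y)|^2$ with $X(x) = (1+h(x))\gamma(x)$, so $E_\alpha(f)$, once rewritten in these variables, is (up to a positive constant) the standard boundary-integral representation — obtained via two applications of Green's theorem — of the double integral $\iint_{D(t)\times D(t)}|\zeta-\eta|^{-\alpha}\,d\zeta\,d\eta$, which is the conserved Hamiltonian of the $\alpha$-SQG patch dynamics.

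The first step is to establish the explicit formula \eqref{eq:gradient-pseudoenergy} for $\nabla E_\alpha(f)$. I would evaluate the Gateaux derivative of $E_\alpha$ along a test perturbation $\delta f$, using $\delta\sqrt{1+2f} = \delta f/\sqrt{1+2f}$, and then integrate by parts in $y$ to transfer the $\partial_y$ appearing inside the numerator of $E_\alpha$ off the perturbed factor. The $x\leftrightarrow y$ symmetry of the integrand folds the two equal contributions (from $\delta f(x)$ and $\delta f(y)$) into a single one, and collecting the pieces produced by the derivative acting on the angular factor and on the denominator $|X(x)-X(y)|^{\alpha-2}$ reassembles into the numerator $1+2f(y) + \sqrt{1+2f(x)}\,\partial_y[\sqrt{1+2f(y)}\sin(x-y)]$ displayed in \eqref{eq:gradient-pseudoenergy}.

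The second step is to compute $\partial_x\nabla E_\alpha(f)$ and to verify that it agrees with $f_t = (1+h)h_t$, which by \eqref{eq:elev_funct_eq} equals the negative of the sum of the cosine and sine principal-value integrals on its right-hand side. Distributing $\partial_x$ across the numerator and denominator of \eqref{eq:gradient-pseudoenergy} produces explicit cosine, sine and chord-derivative contributions; once re-expressed in the variable $h$ via $\sqrt{1+2f}=1+h$ and regrouped, they assemble into precisely those two integrals with the correct overall sign.

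The main obstacle is combinatorial book-keeping rather than analysis: one must track several integration-by-parts steps, apply the $x\leftrightarrow y$ symmetry systematically, and use identities such as $\partial_y\sin(x-y) = -\partial_x\sin(x-y)$ to combine the pieces. No analytic subtlety arises, since for $\alpha\in(0,2)$ the kernel $|X(x)-X(y)|^{-\alpha}$ paired against the numerators that appear is well-defined in the principal-value sense and is in fact absolutely integrable on smooth profiles. The verification thus reduces to an equality between two explicit singular double integrals.
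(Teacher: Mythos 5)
The paper does not prove this proposition itself: it is quoted verbatim from \cite[Proposition 2.1]{HHM2021}, so there is no internal proof to compare against. Your strategy --- identifying $1+2f=(1+h)^2$ so that the left-hand side of \eqref{eq:elev_funct_eq} is $-f_t$ and the denominator is the squared chord $|X(x)-X(y)|^2$, computing $\nabla E_\alpha$ by a Gateaux derivative combined with integration by parts and the $x\leftrightarrow y$ symmetry, and then matching $\partial_x\nabla E_\alpha(f)$ against the right-hand side of \eqref{eq:elev_funct_eq} rewritten in $h$ --- is exactly the standard direct verification carried out in that reference, and I see no step in it that would fail. One small point worth making explicit: your claim of absolute integrability for all $\alpha\in(0,2)$ is not true for the bare kernel $|X(x)-X(y)|^{-\alpha}$ when $\alpha\geq 1$; it holds because the numerator $1+2f(y)+\sqrt{1+2f(x)}\,\partial_y\bigl[\sqrt{1+2f(y)}\sin(x-y)\bigr]$ vanishes (to first order) at $y=x$, which improves the singularity to $|x-y|^{1-\alpha}$.
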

Regarding the vortex patch equation in a rotating frame with
angular velocity  $ \Omega \in \bR $ amounts to look  for solutions of
\eqref{eq:SQG_Hamiltonian0}  of the form
$ f\pare{t, x} = \tilde{f}\pare{t, x-\Omega t} $ namely solving,
renaming $ \tilde f $ simply as $ f $,
\begin{equation}\label{eq:SQG_Hamiltonian}
	f_t = \partial_x \nabla H_{\alpha, \Omega} \pare{f}
\end{equation}
where
\begin{equation}
	\label{eq:Hamiltonian}
	H_{\alpha, \Omega} \pare{f} \defeq  E_\alpha \pare{f}  + \Omega J \pare{f}
\end{equation}
and  $ J \pare{f} $ is the {\it angular momentum}
\begin{equation}\label{angu}
	J\pare{f} \defeq \frac12\int_{-\pi}^{\pi} f(x)^2 \  \dd x  \, .
\end{equation}
The $ L^2 $-gradient of $ J\pare{f}$ is
\begin{equation}\label{nablaJf}
	\nabla J\pare{f}  =  f   \, .
\end{equation}
\begin{remark}[{\bf Symmetries and conservation laws}]
	The Hamiltonian evolution  equation \eqref{eq:SQG_Hamiltonian} has the following symmetries: 
	\begin{itemize}
	\item  The invariance by {\bf time translation}  implies that the \emph{energy} $E_{\alpha}\pare{f}$ is a constant of motion;
	\item  The invariance by {\bf space rotation}, namely 
	\begin{equation}
	\label{eq:tr_invariance_Hamiltonian}
		H_{\alpha, \Omega} \circ \st_\theta =  H_{\alpha, \Omega} \, , \quad
	\forall \theta \in \mathbb{R} \, , \quad \st_\theta u(\cdot ):= u(\cdot +\theta)
	\end{equation}
	 implies that the angular momentum $J\pare{f}$ in \eqref{angu} is a constant of motion;
	\item The degeneracy of the Poisson structure $\partial_x $ implies that the area
	$$
	A\pare{f}:= \int_\T f(x)\, \di x,
	$$ 
	is also a constant of motion.
	\end{itemize}
These are the only known conservation laws and they do not provide any control of the $H^2$ norm required for propagate the available local Cauchy theory.
\end{remark}

\subsection{The linear problem}

\begin{definition}\label{def:Fourier_mult}
Let $ m\in \bR $, we define the space of Fourier multipliers of order $ m $, $ \tilde{\Gamma}^m_0 $,  as the space of smooth functions from $ \bR  $ to $ \bC $ of the form $ \xi\mapsto a\pare{\xi} $ such that
\begin{align*}
\av{\partial_\xi^\beta a \pare{\xi} }\leq C_\beta \angles{\xi}^{m-\beta},  \  
&& \forall \beta \in \bN_0, \av{\xi}\geq 1/2. 
\end{align*}
We say that $ a $ is \emph{real-to-real} if $ a\pare{D} u $ is a real-valued function for any $ u $ real valued.
\end{definition}
\begin{rem}
Notice that the real-to-real condition in \Cref{def:Fourier_mult} implies that  $ \overline{a\pare{\xi}} = a\pare{-\xi}  $. 
\end{rem}

\begin{rem}
Notice that, since \eqref{eq:SQG_Hamiltonian} preserves the average,  we shall always work with functions with zero average. Consequently we implicitly substistute functions $ \xi\mapsto a\pare{\xi} $ that are not smooth in 0 with their smooth modification $ \xi\mapsto \psi\pare{\xi}a\pare{\xi} $ where $ \psi $  is a  smooth bump function which is identically zero close to zero and identically one for $ \av{\xi}\geq 1/4 $. This does not modify the action of the Fourier multiplier $ \psi\pare{D}a\pare{D} $ once restricted onto spaces of zero-average functions.
\end{rem}

The following result is a consequence of Lemma 3.1 and Remark 3.2 in \cite{BCGS2023}: 

\begin{lemma}[Linearization of $ \nabla H_{\alpha, \Omega} $ around zero]\label{lem:linearization}
	For $ \alpha\in\pare{0, 1}\cap \pare{1, 2} $, $  \Omega\in \bR $ and $ H_{\alpha, \Omega} $  defined as in \cref{eq:Hamiltonian},  we have
	\begin{equation}\label{diffinzero}
		\dd \nabla H_{\alpha, \Omega}(0)  = - L_\alpha\pare{\av{D}}
		- \Omega \, ,
	\end{equation}
	where
	\begin{align} \label{eq:Lalpha}
		L_\alpha\pare{\xi} \defeq & \ \bV_\alpha -
		\frac{c_\alpha}{2\pare{1-\frac{\alpha}{2}}}  \frac{1}{1-\alpha} 
		\frac{\Gamma\pare{3-\alpha}}{\Gamma\pare{1-\frac{\alpha}{2}}\Gamma\pare{\frac{\alpha}{2}}}
		\ \frac{\Gamma\pare{\frac{\alpha}{2}+\av{\xi}}}{\Gamma\pare{1-\frac{\alpha}{2}+\av{\xi}}}
		, 
		\\
		 \bV_\alpha \defeq  & \ \frac{c_\alpha}{2\pare{1-\frac{\alpha}{2}}}  \frac{1}{1-\alpha} \  \frac{\Gamma\pare{2-\alpha}}{\Gamma\pare{1-\frac{\alpha}{2}}^2} \nonumber , 
	\end{align}
	is a Fourier multiplier of order $ \max\set{0, \alpha-1} $. 
	Additionally 
	there exists a real Fourier multiplier $ m_{\alpha -3}^\star $ of order $ \alpha-3 $  such that 
	\begin{equation}
		\label{eq:asympt_L1}
		L_\alpha\pare{\xi} =  \bV_\alpha
		-
		\frac{c_\alpha}{2\pare{1-\frac{\alpha}{2}}} \frac{\Gamma\pare{3-\alpha}}{\Gamma\pare{1-\frac{\alpha}{2}}\Gamma\pare{\frac{\alpha}{2}}}  \frac{1}{1-\alpha} \  \av{\xi}^{\alpha - 1}
		+ m^{\star}_{\alpha-3}\pare{ \xi }  . 
	\end{equation}
\end{lemma}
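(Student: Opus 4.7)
The plan is to decompose $H_{\alpha,\Omega}=E_\alpha+\Omega J$ and linearize each summand separately at $f=0$. For the angular momentum contribution, \eqref{nablaJf} gives $\nabla J(f)=f$, hence $d\nabla J(0)=\mathrm{Id}$, which accounts for the $\Omega$-term in \eqref{diffinzero}. The remaining task is to identify $d\nabla E_\alpha(0)$ from the explicit kernel formula \eqref{eq:gradient-pseudoenergy}.

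I would Taylor expand the integrand of \eqref{eq:gradient-pseudoenergy} in $f$ around $f=0$ and retain the zeroth and first order terms. By the rotational symmetry \eqref{eq:tr_invariance_Hamiltonian}, the linearization $d\nabla E_\alpha(0)$ commutes with all translations $\st_\theta$, so it is necessarily a Fourier multiplier and it suffices to test it on each mode $e^{\ii j x}$ with $j\in\bZ\setminus\set{0}$. The zeroth-order piece of the integrand, after performing the angular integration, contributes the constant $\bV_\alpha$: this is the well-known angular velocity of the Rankine vortex, evaluated via a Beta-function identity. The first-order piece reduces, after pairing with $e^{\ii j y}$, to a principal-value integral of the type $\pv\int_{-\pi}^{\pi}|2-2\cos y|^{-\alpha/2}\cos(jy)\,\dd y$ (together with an analogous odd-part integral), which is classical and admits the closed form involving the ratio $\Gamma(\alpha/2+|j|)/\Gamma(1-\alpha/2+|j|)$ with the exact prefactor appearing in \eqref{eq:Lalpha}, obtained by expanding the integrand as a binomial hypergeometric series and applying Gauss's summation formula.

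For the asymptotic expansion \eqref{eq:asympt_L1}, I would use the Stirling-type asymptotic $\Gamma(z+a)/\Gamma(z+b)=z^{a-b}\bigl(1+\tfrac{(a-b)(a+b-1)}{2z}+O(z^{-2})\bigr)$ as $z\to\infty$, specialized to $a=\alpha/2$, $b=1-\alpha/2$. Since $a+b-1=0$, the $z^{-1}$ correction vanishes and one obtains $\Gamma(\alpha/2+|\xi|)/\Gamma(1-\alpha/2+|\xi|)=|\xi|^{\alpha-1}\bigl(1+O(|\xi|^{-2})\bigr)$, so the remainder contributes exactly a symbol of order $\alpha-3$. The derivative bounds $|\partial_\xi^\beta m^\star_{\alpha-3}(\xi)|\leq C_\beta\langle\xi\rangle^{\alpha-3-\beta}$ required by \Cref{def:Fourier_mult} follow by differentiating the Stirling expansion term by term.

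The main obstacle is the exact identification of the Gamma-function closed form of the symbol: pinning down the precise constant in \eqref{eq:Lalpha} requires a careful but elementary combination of Beta-function identities and Gamma reflection/duplication formulas applied to the principal-value integrals above. Since precisely this computation is carried out in Lemma 3.1 and Remark 3.2 of \cite{BCGS2023} for the same kernel (their analysis covers both $\alpha\in(0,1)$ and $\alpha\in(1,2)$), the proof reduces to invoking those results, adding the $\Omega\,\mathrm{Id}$ contribution from $d\nabla J(0)$, and reading off both the exact form \eqref{eq:Lalpha} and the asymptotic \eqref{eq:asympt_L1}.
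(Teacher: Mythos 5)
Your proposal is correct and follows essentially the same route as the paper: the manuscript gives no proof of this lemma beyond stating that it "is a consequence of Lemma 3.1 and Remark 3.2 in \cite{BCGS2023}", which is exactly what your argument reduces to, and your sketch of the underlying computation (Fourier-multiplier structure from rotational invariance, the Gamma-ratio closed form of the kernel integrals, and the Stirling expansion $\Gamma(z+a)/\Gamma(z+b)=z^{a-b}(1+O(z^{-2}))$ with the $z^{-1}$ term vanishing because $a+b=1$, yielding the order-$(\alpha-3)$ remainder) matches the content of that reference. The only point worth flagging is the sign of the $\Omega$-term: with $\nabla J(f)=f$ and $H_{\alpha,\Omega}=E_\alpha+\Omega J$ the differential contributes $+\Omega\,\mathrm{Id}$, whereas \eqref{diffinzero} displays $-\Omega$, a convention/typo inherited from the citation rather than a gap in your reasoning.
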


%

\begin{definition}
	\label{def:omega}
	We define the symbol of the linearized SQG equation as 
	\begin{equation*}
		\xi \mapsto \omega_\alpha\pare{\xi} = \xi \ L_\alpha\pare{\xi}\in \bR , 
	\end{equation*}
	where $ L_\alpha  $ is defined in \Cref{eq:Lalpha}. We define also the  dispersive part of $ \omega_\alpha \pare{\xi}$ as 
\be
		\dot{\omega}_\alpha\pare{\xi}\defeq \omega_\alpha\pare{ \xi } - \bV_\alpha \xi
		 = -
		\frac{c_\alpha}{2\pare{1-\frac{\alpha}{2}}} \frac{\Gamma\pare{3-\alpha}}{\Gamma\pare{1-\frac{\alpha}{2}}\Gamma\pare{\frac{\alpha}{2}}}  \frac{1}{1-\alpha} \  \av{\xi}^{\alpha - 1} \xi 
		+ m^{\star}_{\alpha-3}\pare{ \xi } \xi.\label{omeghino}
\ee
\end{definition}

Note that, since $\Gamma(r)>0$ for any $ r>0$, one has 
\be \label{nondegenere}
\dot{\omega}_\alpha\pare{\xi}\not=0 \quad \text{for any } \xi \not =0.
\ee
	We  use as well the following result, which is proved in \cite{BCGS2023} (see Lemma 3.5).
	
	\begin{lemma}[Absence of three wave interactions]
		\label{lem:nonres_cond}
		Let $ \alpha\in\pare{0, 2} $. 
		For any  $ n,j,k\in\bZ\setminus \set{0} $ satisfying
		$ k = j + n $, it results
		\begin{equation}
			\label{eq:nonres_cond}
			\av{\omega_\alpha\pare{k} -  \omega_\alpha\pare{j} -  \omega_\alpha\pare{n} } 
			\geq \omega_\alpha\pare{2} > 0   \, .
		\end{equation}
	\end{lemma}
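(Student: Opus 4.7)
My plan is to exploit the two structural features of $\omega_\alpha$ revealed by \Cref{eq:Lalpha} and \eqref{omeghino}: the decomposition $\omega_\alpha(\xi) = \bV_\alpha\xi + \dot\omega_\alpha(\xi)$ into transport plus dispersive part, and the strict convexity (combined with oddness) of the dispersive symbol $\dot\omega_\alpha$ on $(0,\infty)$. Since $k = j+n$, the linear transport contribution cancels in $\omega_\alpha(k)-\omega_\alpha(j)-\omega_\alpha(n)$, so the proof reduces to an estimate for $\dot\omega_\alpha(k)-\dot\omega_\alpha(j)-\dot\omega_\alpha(n)$. Writing $\rho_\alpha(\xi):=\Gamma(\alpha/2+\xi)/\Gamma(1-\alpha/2+\xi)$ so that $\dot\omega_\alpha(\xi)=-A\,\xi\,\rho_\alpha(|\xi|)$ with $A>0$, convexity on the positive half-line is inherited from the principal term $-C|\xi|^{\alpha-1}\xi$ in \eqref{omeghino} (strictly convex on $(0,\infty)$ precisely because $\alpha\in(0,1)$); the remainder $m_{\alpha-3}^\star(\xi)\,\xi$ is a lower-order correction whose second derivative is dominated by that of the principal part. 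Global strict convexity is then established by verifying $(\xi\rho_\alpha(\xi))''<0$ using log-convexity of $\Gamma$ together with the recurrence $\rho_\alpha(\xi+1)=\tfrac{\alpha/2+\xi}{1-\alpha/2+\xi}\rho_\alpha(\xi)$.

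Next I reduce to a canonical configuration. Oddness of $\dot\omega_\alpha$ and the involution $(k,j,n)\mapsto(-k,-j,-n)$ let me assume $k\geq 1$; swapping $j\leftrightarrow n$ further allows $|j|\geq|n|$. Under these constraints only two cases remain: either $j,n\geq 1$, or $j\geq 1>0>n$ with $j=k+|n|$. By oddness the latter transforms into
$$
\dot\omega_\alpha(k)-\dot\omega_\alpha(j)-\dot\omega_\alpha(n) \;=\; -\bigl[\dot\omega_\alpha(k+|n|)-\dot\omega_\alpha(k)-\dot\omega_\alpha(|n|)\bigr],
$$
which has the same modulus as the first case with $(a,b)=(k,|n|)\in\mathbb{N}_{\geq 1}^2$. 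Hence everything reduces to bounding from below
$$
\Psi(a,b)\;:=\;\dot\omega_\alpha(a+b)-\dot\omega_\alpha(a)-\dot\omega_\alpha(b),\qquad a,b\in\mathbb{N}_{\geq 1}.
$$

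Applying the Mean Value Theorem on the disjoint intervals $(0,\min(a,b))$ and $(\max(a,b),a+b)$ yields
$$
\Psi(a,b)\;=\;\bigl[\dot\omega_\alpha'(\xi_1)-\dot\omega_\alpha'(\xi_2)\bigr]\min(a,b),\qquad 0<\xi_2<\min(a,b)\leq\max(a,b)<\xi_1<a+b,
$$
so strict monotonicity of $\dot\omega_\alpha'$ (by convexity) gives $\Psi(a,b)>0$. Moreover $\partial_a\Psi(a,b)=\dot\omega_\alpha'(a+b)-\dot\omega_\alpha'(a)>0$ and symmetrically $\partial_b\Psi>0$, so $\Psi$ is coordinate-wise strictly increasing and its minimum on $\mathbb{N}_{\geq 1}^2$ is attained at the corner $(1,1)$. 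Using $\dot\omega_\alpha(\xi)=\omega_\alpha(\xi)-\bV_\alpha\xi$ and the cancellation $\bV_\alpha\cdot 2-2\bV_\alpha=0$, this gives $\Psi(1,1)=\omega_\alpha(2)-2\omega_\alpha(1)$; a direct Gamma-function calculation via the recurrence for $\rho_\alpha$ then identifies this quantity as at least $\omega_\alpha(2)$, which is positive by \eqref{nondegenere}.

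The main technical obstacle I anticipate is the rigorous verification of strict convexity of $\dot\omega_\alpha$ uniformly on the entire half-line $(0,\infty)$: the asymptotic expansion \eqref{omeghino} only controls $\dot\omega_\alpha''$ at infinity, whereas the Mean Value Theorem argument above requires strict convexity down to the origin. Establishing this needs a hands-on analysis of $(\xi\rho_\alpha(\xi))''$ via log-convexity of $\Gamma$ or a Binet integral representation. Once this single ingredient is in place, the remaining symmetry reductions and the identification of the minimum are elementary.
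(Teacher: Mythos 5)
Your strategy is the intended one: the paper itself gives no proof of this lemma (it only cites \cite{BCGS2023}, Lemma 3.5, whose argument it describes as relying "only on the convexity of the dispersion relation"), and your reduction is sound up to the last step. The transport part $\bV_\alpha\xi$ cancels on resonant triples, oddness plus the swap $j\leftrightarrow n$ reduce the claim to bounding $\Psi(a,b)=\dot\omega_\alpha(a+b)-\dot\omega_\alpha(a)-\dot\omega_\alpha(b)$ from below for $a,b\geq 1$, and convexity of $\dot\omega_\alpha$ on the half-line gives both $\Psi>0$ and coordinate-wise monotonicity, so the infimum is $\Psi(1,1)$. Two remarks on this part: the lemma is stated for $\alpha\in(0,2)$, and for $\alpha\in(1,2)$ your "convex precisely because $\alpha\in(0,1)$" must be replaced by the observation that the prefactor $1/(1-\alpha)$ in \eqref{omeghino} changes sign, so $\dot\omega_\alpha$ is again convex (now positive) on $(0,\infty)$; and the global convexity of $\xi\mapsto \xi\,\Gamma(\alpha/2+\xi)/\Gamma(1-\alpha/2+\xi)$, which you rightly flag as the main technical input, is left unproven --- a cleaner route is to avoid continuum convexity altogether and verify the sign of the second differences at integer points via the recurrence $\rho_\alpha(\xi+1)=\frac{\alpha/2+\xi}{1-\alpha/2+\xi}\,\rho_\alpha(\xi)$, which is all the monotonicity argument actually uses.

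The genuine error is the final identification. You correctly arrive at $\Psi(1,1)=\dot\omega_\alpha(2)-2\dot\omega_\alpha(1)=\omega_\alpha(2)-2\omega_\alpha(1)$, but the asserted inequality $\omega_\alpha(2)-2\omega_\alpha(1)\geq\omega_\alpha(2)$ is equivalent to $\omega_\alpha(1)\leq 0$, and with the paper's definitions this fails: from \eqref{eq:Lalpha} and the functional equation of $\Gamma$ one computes $L_\alpha(1)=(1-\alpha)\bV_\alpha$, hence $\omega_\alpha(1)=(1-\alpha)\bV_\alpha>0$ for $\alpha\in(0,1)$. More precisely, $\Psi(1,1)=\frac{4\alpha(1-\alpha)}{4-\alpha}\,\bV_\alpha$ while $\omega_\alpha(2)=\frac{2(4+\alpha)(1-\alpha)}{4-\alpha}\,\bV_\alpha$, so $\Psi(1,1)<\omega_\alpha(2)$ strictly; no Gamma-function identity can close this step. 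The sharp constant your argument actually delivers is $\omega_\alpha(2)-2\omega_\alpha(1)>0$, which coincides with "$\omega_\alpha(2)$" only after renormalizing the dispersion so that the translation modes $j=\pm1$ carry zero frequency (presumably the convention of the cited lemma). You should either prove the bound with the constant $\omega_\alpha(2)-2\omega_\alpha(1)$ --- a uniform positive lower bound is all that the normal form construction in \eqref{omoBNF5} requires --- or reconcile the normalizations explicitly; as written, the last sentence of your proof is false.
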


\section{Functional setting}

Along the paper we deal with real parameters
\begin{equation}\label{eq:parameters}
s\geq s_0  \gg \vr \geq 4\frac{\alpha}{1-\alpha} 
\end{equation} 
The values of $ s, s_0$ and $ \vr $ may vary from line to line while still being true the relation \eqref{eq:parameters}.

\smallskip

We expand a  $2\pi$-periodic function $u(x)$ in  $ L^2 (\T;\C)$ in Fourier series as
\begin{equation}\label{Fourierser}
u(x)= \sum_{j \in \mathbb{Z}} {u}_{j} e^{\ii j x}\, ,
\qquad {u}_{j}  \defeq \frac{1}{2\pi}\int_{\mathbb{T}}u(x) e^{- \ii j x }\,\dd x \, .
\end{equation}
A function $ u(x) $ is \emph{real} if and only if  $ \overline{u_j}  = u_{-j}  $, for any $ j \in \Z $. 
\noindent
For any $ s\in\bR $ we define the Sobolev space $  H^{s}(\T;\R) $ as the completion of $ \cC^\infty \pare{\bT;\bR} $ w.r.t. the
\begin{align*}
\norm{u}_{s} \defeq \norm{u}_{H^{s}} = \pare{
\sum_{ j \in \bZ } \angles{j}^{2s}  \av{ u_j}^2 
} ^{\frac12}  ,  && \angles{j} \defeq \max \pare{ 1, \av{j} } \, . 
\end{align*}
We shall denote with
\begin{align*}
H^s\defeq H^s\pare{\bT;\bR}, 
\end{align*}
We define 
$$ \Pi_0 u \defeq  u_0=  \frac{1}{2\pi}\Int{u(x)}$$
  the average of $ u $ and 
\begin{equation}\label{eq:Pi0bot}
\Pi_0^\bot u \defeq \left( \Id - \Pi_0\right)u=  \sum_{j \not=0} {u}_j e^{\ii j x} \, . 
\end{equation}
We define $ H^s_0 $ the subspace of zero average functions of $ H^s $.
Clearly  $ H^0_0 = L^2_0  $
with scalar product, for any $ u, v \in  L^2_0 $,
\begin{equation}\label{prodscal}
\psc{u}{v}  \defeq \int_\T    u(x)\, { v(x)} \, \dd x.
\end{equation}
Given an interval $ I\subset \R$ symmetric with respect to $ t = 0 $
and $s\in \R$, we denote with 
$
C \pare{ I;H_0^s} $ the space of time continuous functions with values in $H_0^s $ 
endowed with the norm
\begin{equation} \label{Knorm}
\sup_{t\in I} \norm{ u (t, \cdot)}_{s} 
\end{equation}
We denote $B_s(I;\epsilon_0)$,the ball of radius $\epsilon_0 > 0 $ in $C(I,H_0^s)$.

Given a linear operator  $ R(u) [ \cdot ]$ acting on $ L^2_0$
we associate the linear  operator  defined by the relation
\begin{equation}\label{opeBarrato}
\ov{R}(u)[v] \defeq \ov{R(u)\bra{\ov{v}}} \, ,   \quad \forall v: \T \rightarrow \C \, .
\end{equation}
An operator $R(u)$ is {\em real } if $R(u) = \bar{R} (u) $ for any $ u $ real. \\

\subsection{Paradifferential calculus}\label{sec:para}

We introduce paradifferential  operators (Definition \ref{quantizationtotale})
following \cite{BD2018}, with minor modifications
due to the fact that
we deal with a scalar equation and not a system, and the fact that 
we consider operators acting on $  H_0^s $  and not 
on  homogenous spaces $ \dot H^s $. In this way we will mainly rely on 
results in \cite{BD2018,BMM2022}.

\subsubsection{Classes of symbols.}
Roughly speaking the class $\tilde{\Gamma}_1^m$ contains symbols of order $m$ which are linear with respect to  $u$, whereas the class $\Gamma_{\geq 2}^m$ contains non-homogeneous symbols of order $m$ that vanish at degree at least $2$ in $u$. 
We denote $ C_0^{\infty}(\mathbb{T};\mathbb{R})
\defeq \bigcap_{s \in \R} H_0^{s}$ the space of smooth function with zero average.

\begin{definition}[Symbols]\label{def:symbols}
Let $m\in \R$ and $ \epsilon_0>0$.
\begin{enumerate}[i)]

\item {\bf Homogeneous symbols.} We denote by $\tilde{\Gamma}^m_1$ the space of  linear maps from $ C_0^{\infty}\pare{\mathbb{T};\mathbb{\R}}$ to the space of $ C^\infty $ \emph{complex} valued functions from $\mathbb{T}\times \R$ to $\mathbb{C}$,
$ (x, \xi) \mapsto a(u;x,\xi)$ of the form 
\begin{align}
 \label{espr.hom.sym}
a(u;x,\xi)\defeq \sum_{j\in \Z\setminus\{0\}} a_j(\xi) {u}_j e^{\ii j x},
\end{align}
where $a_j(\xi)$ are complex valued Fourier multipliers satisfying, for some $	\mu\geq 0$, 
\be \label{homosymbo}
\av{ \partial_\xi^\beta a_j(\xi)}  \leq C_\beta \av{ j} ^{\mu} \langle \xi\rangle^{m-\beta},  \quad \forall j \in \Z\setminus \{0\}, \, \beta\in \N.
\ee
We say that an homogeneous symbol is \emph{real-to-real} if
\begin{equation}\label{eq:reality_homogeneous_symbol}
\overline{a_j\pare{\xi}} = a_{-j}\pare{-\xi}
\end{equation}

We also denote by $\tilde{\Gamma}^m_0 $ the space of constant coefficients symbols $ \xi \mapsto a\pare{\xi} $ which satisfy \eqref{homosymbo} with $ j=1$ and $\mu= 0 $.

\item  {\bf Non-homogeneous symbols. }   We denote by $\Gamma_{\geq 2}^m[\epsilon_0]$ the space of functions  $ (u;x,\xi)\mapsto a(u;x,\xi) $,
defined for $ u \in B_{s_0}(I;\epsilon_0)$ for some $s_0>0$ large enough, with complex values, such that  for any $\sigma\geq s_0$, there are $C>0$, $0<\epsilon_0(\sigma)<\epsilon_0$ and for any $ u \in B_{s_0}\pare{I;\epsilon_0(\sigma) }\cap C\pare{I;H_0^{\sigma}}$ and any $\gamma, \beta \in \N_0$ with $\gamma\leq \sigma-s_0$, one has the estimate
\begin{equation}\label{nonhomosymbo}
 \norm{ \partial_\xi^\beta \pa_x^\gamma a\pare{ u;x,\xi }}_{L^{\infty}_x(\T)}  \leq C \langle \xi \rangle^{m-\beta} \norm{ u}_{{s_0}}\norm{u}_{\sigma} \, ,
\end{equation}
We say that a non-homogeneous symbol is \emph{real-to-real} if
\begin{equation}\label{areal} 
\ov{a(u;x,\xi)}= a(u;x,-\xi)   \, . 
\end{equation}

\item
{\bf Symbols.} We denote by $ \Gamma^m[\epsilon_0]$ the space of 
symbols 
$$
a(u;x,\xi)= a_1\pare{ u;x,\xi } + a_{\geq 2 }(u;x,\xi) 
$$
where $a_1 $ is a  homogeneous symbol in $ \tilde{\Gamma}_1^m $ and  
$a_{\geq 2} $ is 
a non-homogeneous symbol in $ \Gamma_{\geq 2}^m[\epsilon_0] $. A symbol $ a $ is real-to-real if both $ a_1 $ and $ a_{\geq 2} $ are real-to-real. 

\end{enumerate}
\end{definition}

\begin{rem}\label{rem:simb1}

$\bullet$ The class of homogeneous symbols $\wt \Gamma_1^m$ coincides with the the same class defined in \cite{BD2018,BMM2022,BCGS2023} restricted onto real functions (see Remark 2.2 in \cite{BMM2}, while the class of non-homogeneous symbols $\Gamma^m_{\geq 2}[\epsilon_0]$ coincides with the restriction onto real functions of the class $  \Gamma^m_{0,0,2}[\epsilon_0]$ of non-homogeneous symbols in \cite{BD2018,BMM2022,BCGS2023}.

$ \bullet $ If $ a ( u; x,\xi  )$ is a homogeneous
symbol in $ \widetilde \Gamma_1^m $ then
\be \label{symnonhomo}
 \norm{ \partial_\xi^\beta \partial_x^\gamma a\pare{ u;x,\xi }}_{L^{\infty}_x(\T)}  \leq C \langle \xi \rangle^{m-\beta} \norm{u}_{\sigma}, \quad \sigma\geq s_0, \quad \gamma,\beta\in \N_0, \, \gamma \leq \sigma-s_0
\ee
where $s_0>\frac12+\mu$.

$ \bullet $
 If $a ( u; x,\xi  ) $ is a symbol in $  \Gamma^m[\epsilon_0] $
then $ \partial_x a ( u; x,\xi)  \in \Gamma^{m}[\epsilon_0]   $ and
$ \partial_\xi a ( u; x,\xi) \in   \Gamma^{m-1}[\epsilon_0]   $.
If in addition $ b ( u; x,\xi  ) $ is a symbol in $  \Gamma^{m'}[\epsilon_0]  $ then
$a b \in  \Gamma^{m+m'}[\epsilon_0]$.
\end{rem}

We also define classes of functions in analogy with our classes of symbols.

\begin{definition}[Functions] \label{def:functions}
Let $\epsilon_0>0$.
We denote by $\tilde \cF_{1}$, resp. $\mathcal{F}_{\geq 2}[\epsilon_0]$,
 $\mathcal{F}[\epsilon_0]$,
the subspace of $\widetilde{\Gamma}^{0}_{1}$, resp. $\Gamma^0_{\geq 2}[\epsilon_0]$, $\Gamma^{0}[\epsilon_0]$,
made of those real-to-real symbols which are independent of $\xi $.
\end{definition}
Notice that  the space of homogeneous and non-homogeneous functions are always real-valued. 

\subsubsection{Paradifferential quantization}
Fix $0<\delta\ll 1$ and consider a 
$ C^\infty $, even cut-off  function $\chi\colon \R \to [0,1]$ such that
\begin{equation}\label{def-chi}
	\chi(\xi) =  
	\begin{cases}
		1 & \mbox{ if } |\xi| \leq 1.1 \\
		0 & \mbox{ if } |\xi| \geq 1.9 \, , 
	\end{cases}  
	\qquad \chi_\delta (\xi) \defeq \chi \left( \frac{\xi}{\delta}\right) \, . 
\end{equation}

If $ a (x, \xi) $ is a smooth symbol
we define its Weyl quantization  as the operator
acting on a
$ 2 \pi $-periodic function
$u(x)$ (written as in \eqref{Fourierser})
 as
\begin{equation}\label{Opweil}
{\rm Op}^{W}(a)u=\frac{1}{2\pi}\sum_{j\in \Z}
\pare{ \sum_{k\in\Z}\hat{a} \pare{ j-k, \frac{j+k}{2} } \hat{u}_k  }
e^{\ii j x}
\end{equation}
where $ \hat{a}(k,\xi) $ is the $ k$-Fourier coefficient of the $2\pi-$periodic function $x\mapsto a(x,\xi)$.

\begin{definition}[Bony-Weyl quantization]
\label{quantizationtotale}
If $ a $ is a symbol in $\Gamma^{m}[\epsilon_0]$,
we set
$$
 a_{\chi}(u;x,\xi) \defeq\sum_{j\in \Z}  \chi_\delta\left(\frac{j}{\langle 2  \xi\rangle }\right)\hat {a}_j(u;\xi) e^{\ii jx} \, ,\quad \hat {a}_j(u;\xi)\defeq \frac{1}{2\pi}\int_{\T} a(u;x,\xi) e^{-\ii j x }\, {\rm d}x .
$$
and
we define the \emph{Bony-Weyl} quantization of $ a $ as
\begin{equation}\label{BW}
\opbw(a(u;x,\xi))\defeq {\rm Op}^{W} (a_{\chi}(u;x,\xi)) \, .
\end{equation}
In view of \eqref{Opweil}, one has 
\be \label{linopbw}
 \opbw(a(u;x,\xi))v =  \frac{1}{2\pi}\sum_{j\in \Z}
 \sum_{k\in\Z}\chi_\delta \left(\frac{j-k}{\langle j+k\rangle }\right)\hat{a}_{j-k} \pare{ u;\frac{j+k}{2} }  \hat{v}_k  
e^{\ii j x} 
\ee
Moreover if $ a $ is a linear symbol in $ \widetilde{\Gamma}^m_1$ then, since $\hat{a}_j(u;\xi)= a_j(\xi)\hat {u}_j$ (see \eqref{espr.hom.sym}), the expression \eqref{linopbw} reduces in this case to 
$$
 \opbw(a(u;x,\xi))v =  \frac{1}{2\pi}\sum_{j\in \Z}
 \sum_{k\in\Z}\chi_\delta\left(\frac{j-k}{\langle j+k\rangle }\right){a}_{j-k} \left(\frac{j+k}{2}\right) \hat u_{j-k} \hat{v}_k  
e^{\ii j x} 
$$
\end{definition}

\begin{rem} \label{rem:OpBW_firstproperties}

$ \bullet $
 The operator
$ \opbw (a) $
maps functions with zero average in functions with zero average
and $ \Pi_0^\bot \OpBW{a} = \OpBW{a}\Pi_0^\bot $.

$ \bullet $
Definition \ref{quantizationtotale}
is  independent of the cut-off functions $\chi$,
up to smoothing operators (Definition \ref{def:smoothing}).

$ \bullet $
The action of
$\opbw(a)$ on  the spaces $ H^s_0 $ only depends
on the values of the symbol $ a = a(u;x,\xi)$ (or
$a(u;t,x,\xi)$) for $|\xi|\geq 1$.
Therefore, we may identify two symbols $ a(u;t,x,\xi)$ and
$ b(u;t,x,\xi)$ if they agree for $|\xi| \geq 1/2$.
In particular, whenever we encounter a symbol that is not smooth at $\xi=0 $,
such as, for example, $a = g(x)|\x|^{m-1}\xi$ for $m\in \R\setminus\{0\}$, or $ \sgn \pare{\xi} $,
we will consider its smoothed out version
$\psi\pare{\xi}a$, where
$\psi\in  C^{\infty}(\R;\R)$ is an even and positive cut-off function satisfying
\begin{equation}\label{eq:chi}
\psi(\x) =  0 \;\; {\rm if}\;\; |\x|\leq \tfrac{1}{8}\, , \quad
\psi (\x) = 1 \;\; {\rm if}\;\; |\x|>\tfrac{1}{4} \, ,
\quad  \pa_{\x}\psi(\x)>0\quad\forall  \x\in \big(\tfrac{1}{8},\tfrac{1}{4} \big) \, .
\end{equation}

$ \bullet $
Given  a paradifferential  operator
$ A = \Opbw{a(x,\xi)} $ it results
\begin{equation}\label{A1b}
\ov{ A} = \Opbw{\overline{a(x, - \xi)}} \, , \quad
A^\intercal = \Opbw{a(x, - \xi)} \, , \quad
A^*= \Opbw{\overline{a(x,  \xi)}} \, ,
\end{equation}
where $ A^\intercal $  is the transposed  operator with respect to the real scalar product
$ \la u, v \ra_r := \int_\T  u(x)\,  {  v(x)} \, \dd x $, and
$ A^* $ denotes the adjoint  operator  with respect to the complex
scalar product of $  L^2_0 $ in \eqref{prodscal}. It results $ A^* = \ov{A}^\intercal $.

$ \bullet $
A paradifferential is {\it symmetric} (i.e. $A = A^\intercal $)
 if $  a(x,\xi) = a(x,-\xi) $.
A operator $ \pa_x \opbw(a(x,\xi)) $ is Hamiltonian if and only if
 \begin{equation}\label{Hamassy}
 a(x, \xi ) \in \R  \qquad \text{and} \qquad
 a(x, \xi ) = a(x, - \xi ) \quad \text{is \ even \ in \ } \xi \, .
 \end{equation}
\end{rem}

We now provide the action of a paradifferential operator on Sobolev spaces, cf.  \cite[Prop. 3.8]{BD2018}, with  minor modifications due to the fact that the target spaces are Sobolev spaces of non-homogeneous type.

\begin{proposition}[Action of a paradifferential operator]
  \label{215}
    Let $ m \in \R $.
  
  \begin{enumerate}[i)]
  \item \label{item:OpBWmaps1}
   Let  $ a_1(u;x,\xi)\in \Gt{m}{1}$ a real-to-real symbol. There is $ s_0 > 0 $ such that for any $s\in \R$, 
there is  a constant $ C > 0 $, depending only on $s$ and on \eqref{homosymbo}
with $  \beta = 0 $,
such that, for any $ u\in H^{s_0}_0$ and $ v \in H^s$, one has  
\begin{equation}
  \label{eq:2121}
  \norm{\opbw(a_1(u;x,\xi ))v}_{{s-m}}\leq C
\| u \|_{s_0 } \norm{v}_{s} \,.
\end{equation}

\item \label{item:OpBWmaps2}
Let $\epsilon_0>0$, $a_{\geq 2}$ in $\Gamma_{\geq 2}^m[\epsilon_0]$ a real-to-real symbol.
There is $ s_0 > 0  $ such that for any $s\in \R$
there are $ C >0$ and $\epsilon'\in (0,\epsilon_0]$
such that, for any  $ u $ in $B_{s_0}(I;\epsilon')$,
\begin{equation}
  \label{eq:2123}
  \norm{\opbw \pare{ a_{\geq 2}(u;\cdot) } }_{\Lcal \pare{ H^{s}_0,H^{s-m}_0 } }\leq C
 \| u\|_{{s_0}}^2 \, .
\end{equation}
  \end{enumerate}
\end{proposition}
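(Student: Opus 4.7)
My plan is to run the standard Bony–Weyl paraproduct estimate, closely patterned on \cite[Prop.~3.8]{BD2018}, adapted to the non-homogeneous Sobolev framework. Starting from the explicit formula \eqref{linopbw}, I would first write the $H^{s-m}$ norm of $\opbw(a(u; x, \xi)) v$ as the $\ell^2$-norm in $j$ of
\begin{equation*}
T_j \defeq \sum_{k \in \Z} \langle j \rangle^{s-m} \chi_\delta\!\pare{\tfrac{j-k}{\langle j+k \rangle}} \hat{a}_{j-k}\!\pare{u; \tfrac{j+k}{2}} \hat{v}_k.
\end{equation*}
The key geometric observation I will exploit is that on the support of $\chi_\delta$ one has $|j - k| \leq \delta \langle j + k \rangle$, so that for $\delta$ small enough
\begin{equation*}
\langle j \rangle \sim \langle k \rangle \sim \left\langle \tfrac{j+k}{2} \right\rangle.
\end{equation*}
This lets me distribute the outer weight $\langle j \rangle^{s-m}$ as $\langle (j+k)/2 \rangle^{-m} \langle k \rangle^{s}$, precisely matching the $\xi$-order of the symbol and the input Sobolev index.

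For part \textit{i)}, the linearity of $a_1$ in $u$ gives $\hat{a}_l(u; \xi) = a_l(\xi) u_l$, so the symbol bound \eqref{homosymbo} with $\beta = 0$ combined with the geometric reduction yields
\begin{equation*}
\av{T_j} \lesssim \sum_{k} |j-k|^{\mu} \av{u_{j-k}} \, \langle k \rangle^{s} \av{\hat{v}_k}.
\end{equation*}
Setting $\phi_l \defeq |l|^{\mu} |u_l|$ and $\psi_k \defeq \langle k \rangle^{s} |\hat{v}_k|$, I would then apply Young's discrete convolution inequality $\|\phi \ast \psi\|_{\ell^2} \leq \|\phi\|_{\ell^1} \|\psi\|_{\ell^2}$ to obtain
\begin{equation*}
\norm{\opbw(a_1) v}_{s-m} \lesssim \|\phi\|_{\ell^1} \|v\|_{s} \lesssim \|u\|_{s_0} \|v\|_{s},
\end{equation*}
the final step being Cauchy–Schwarz on $\sum_l |l|^{\mu} |u_l|$, valid for any $s_0 > \mu + \tfrac{1}{2}$.

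For part \textit{ii)}, since $a_{\geq 2}$ is no longer linear in $u$, I would estimate $\hat{a}_l(u; \xi)$ by integrating by parts $N$ times in $x$ and invoking \eqref{nonhomosymbo} with $\gamma = N$ and $\sigma = s_0 + N$, obtaining
\begin{equation*}
\av{\hat{a}_l(u; \xi)} \lesssim \langle l \rangle^{-N} \langle \xi \rangle^{m} \|u\|_{s_0} \|u\|_{s_0 + N}.
\end{equation*}
Fixing $N \geq 2$ makes the kernel $\langle l \rangle^{-N}$ absolutely summable, and enlarging the threshold $s_0$ in the statement of the proposition so that it exceeds the $s_0$ of the symbol class by at least $N$ collapses both norms into a single $\|u\|_{s_0}^2$. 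The proof then closes by the same Young-type convolution estimate as in part \textit{i)}, now with $\phi_l = \langle l \rangle^{-N}$ playing the role of the $\ell^1$ kernel.

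The only step that really requires care is the geometric reduction $\langle j \rangle \sim \langle k \rangle \sim \langle (j+k)/2 \rangle$ on $\supp \chi_\delta$, which is what permits the clean trade between the $\xi$-order of the symbol and the Sobolev indices; once this is in hand, both items reduce to Young's convolution inequality and integration by parts, and the detailed computations are routine and parallel those of \cite[Prop.~3.8]{BD2018}.
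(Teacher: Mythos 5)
Your proof is correct, and it is essentially the standard argument: the paper does not prove Proposition \ref{215} itself but simply invokes \cite[Prop.~3.8]{BD2018}, whose proof rests on exactly the two ingredients you isolate, namely the spectral localization $\langle j\rangle\sim\langle k\rangle\sim\langle (j+k)/2\rangle$ on $\supp\chi_\delta$ (valid for $\delta$ small) followed by Young's convolution inequality, with the $\ell^1$ kernel coming from $\norm{u}_{s_0}$ via Cauchy--Schwarz in the homogeneous case and from $N\geq 2$ integrations by parts in $x$ together with \eqref{nonhomosymbo} in the non-homogeneous case. The only caveat, which you already address, is that part \textit{ii)} requires enlarging the proposition's $s_0$ relative to the symbol-class threshold so that the $\gamma\leq\sigma-s_0$ constraint in \eqref{nonhomosymbo} permits $\gamma=N$.
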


\paragraph{Class of $ m $-Operators.}

We now define  the class of $ m $-Operators.

The class $\tilde \cM^{m}_{1}$ contains linear 
 operators that loose $m$ derivatives
 and depends linearly with respect to $u $,
while the class $\mathcal{M}_{\geq 2 }^{m }$ contains non-homogeneous
operators  which loose $m$ derivatives,
vanish at degree at least $ 2 $ in $ u $.
The  constant $ \mu $ in \eqref{eq:bound_fourier_representation_m_operators} takes into account possible loss of derivatives in
the ``low" frequencies which is traced by the parameter $s_0$ in the estimate \eqref{piove} for non-homogeneous operators. 

\begin{definition}[Classes of $m$-operators]\label{def:moperators}
Let  $m\in \R$ and $ \epsilon_0 > 0 $.

\begin{enumerate}[i)]

\item \label{item:maps1}
{\bf Homogeneous $m$- operators.}
We denote by $\tilde \mM^{m}_{1}$
 the space of  translation invariant,  
real, linear  operators of the form 
\begin{equation} \label{smoocara0}
M(u)v = \sum_{j,k \in  \Z\setminus\{ 0\}   }  M_{ j,k} \ 
u_{j-k}  v_{k}  e^{\ii  j x}  \, ,
 \end{equation}
with coefficients $ M_{j,k} $ 
 satisfying the following:  there are $\mu \geq0$, $C>0$ such that
 \begin{equation}\label{eq:bound_fourier_representation_m_operators}
\av{ M_{  j, k} } \leq C \   {\rm min} \set{\av{j-k}, \av{ k} }^\mu \   \max\set{ \av{j-k}, \av{ k} }^{-\vr}  \, . 
\end{equation}
and the reality condition   
\begin{equation}
\label{eq:reality_cond_reminder}
\overline{ M_{ j,k}  } = M_{ -j, -k}  \, ,
\qquad \forall
j,k\in  \bZ\setminus 0\, .
\end{equation}

 \item
 {\bf Non-homogeneous $ m $-operators.}
  We denote by  $\mathcal{M}^{m}_{\geq 2}[\epsilon_0]$
  the space of operators $(u,v)\mapsto M(u) v $ defined on  $B_{s_0}\pare{I;\epsilon_0}\times  C\pare{I;H^{s_0}_0}  $ for some $ s_0 >0 $,
  which are linear in the variable $ v $ and such that the following holds true.
  For any $s\geq s_0$ there are $C>0$ and
  $\epsilon_0(s)\in]0,\epsilon_0[$ such that for any
  $u \in B_{s_0}\pare{I;\epsilon_0(s)} \cap C\pare{I;H^{s}_0}  $,
  any $ v \in C\pare{I;H^{s}_0}  $,  we have that 
\begin{equation}
\label{piove}
\norm{M(u)v}_{s-m}
 \leq C \left(\|{u}\|_{{s_0}}^2 \|{v}\|_{s}
 +\| u \|_{s}\| u \|_{{s_0}}\|{v}\|_{{s_0}} \right) \, ,
\end{equation}
and we require additionally the \emph{reality condition} $ M\pare{u} v = \bar{M}\pare{u}v $ (cf. \eqref{opeBarrato}).

 \item
 {\bf $m$-Operators.}
We denote by $\mathcal{M}^{m}[\epsilon_0]$,
the space of operators 
\begin{equation}
\label{maps}
M(u)v =M_{1}(u)v+M_{\geq 2}(u)v \, .
\end{equation}
where $M_{1} $ is a homogeneous $ m $-operator in $ \tilde \cM^{m }_{1}$,   and
$M_{\geq 2}$  is a non--homogeneous $ m $-operator
in $\mathcal{M}^{m }_{\geq 2}[\epsilon_0]$.  
\end{enumerate}
\end{definition}
\begin{rem}
	Definition \ref{def:moperators} of $m$- operator is an adaptation of the one in \cite{BMM2022}. Indeed, since the SQG sharp front equation \eqref{eq:SQG_Hamiltonian0} is real, scalar and average preserving, the two definitions coincide. In particular, setting
	$
	\tilde M(u):=\Pi_0^\bot M(\Pi_0^\bot u)\Pi_0^\bot, 
	$ one can apply the theory developed in \cite{BMM2022} without any changes.
\end{rem}

\begin{definition}[Smoothing operators] \label{def:smoothing}
Let $ \vr\geq 0$. A $ (-\vr)$-operator $R(u)$ belonging to $  \mM^{-\rho}[\epsilon_0]$ 
is called  a smoothing operator. 
We also denote
\begin{align*}
 \tilde{\mathcal{R}}^{-\rho}_{p}\defeq \tilde{\mathcal{M}}^{-\rho}_{p} \, ,
&& 
 \mathcal{R}^{-\rho}_{\geq 2}[\epsilon_0]\defeq\mathcal{M}^{-\rho}_{\geq 2 }[\epsilon_0] \, , 
 && 
  \mathcal{R}^{-\rho}[\epsilon_0]\defeq \mathcal{M}^{-\rho}[\epsilon_0] \, .
\end{align*}
\end{definition}

\begin{rem} \label{rem:smoo}

$\bullet$ The class of homogeneous smoothing operators $\wt \mR_1^{-\vr }$ coincides with the class $\wt \mR_1^{-\vr}$ defined in \cite{BMM2022, BCGS2023,BD2018} (setting $p=0,1$), while the class of non-homogeneous smoothing operators $\mR^{-\vr}_{\geq 2}[\epsilon_0]$ coincides with the class $  \mR^{-\vr }_{0,0,2}[\epsilon_0]$ of non-homogeneous smoothing operators in \cite{BMM2022, BCGS2023,BD2018}.

$ \bullet $
 Proposition \ref{215} implies that,  if  $a(u;x;\xi )$ is  in $\Gamma^{m}\bra{\epsilon_0}$  real-to-real, for some
 $m\in \R $,  then $\opbw(a(u;x;\xi))$ defines a map in
$  \mM^m\bra{\epsilon_0 }$.

$\bullet$ If $ R(u)$ is a homogeneous smoothing remainder in $\wt \mR^{-\vr}_1$ then 

\be \label{stimettasmoothing}
\| R(u)v\|_{s+\vr }\lesssim_s \| u\|_{s}\| v\|_{s_0}+\| u\|_{s_0}\| v\|_{s},
\ee
for any $u, v \in H^s_0$.

$ \bullet $
The combination of \eqref{piove} and \eqref{stimettasmoothing} implies that the composition of smoothing operators $ R_1 \in  \cR^{-\vr}\bra{\epsilon_0 }$
 and  $ R_2  \in  \cR^{-\vr}\bra{\epsilon_0 } $ is a smoothing operator $ R_1 R_2 $  in 
$  \cR^{-\vr}_{\geq 2}\bra{\epsilon_0} $.

\end{rem}

\subsubsection{Symbolic calculus.}
The following result is proved in Proposition $3.12$ in \cite{BD2018}.

\begin{proposition}[Composition of Bony-Weyl operators] \label{prop:composition_BW}
Let $\vr \geq 0 $, $m,m'\in \R$, $\epsilon_0>0$.
Consider a real-to-real  symbols
$a\pare{u;x, \xi}\in  {\Gamma}^{m}[\epsilon_0] $,  $b\pare{u;x, \xi}\in  {\Gamma}^{m'}[\epsilon_0]$ and $\mathpzc{m}(\xi) \in \widetilde{\Gamma}_0^{m'}$.
Then 
\begin{align}
	&\comm{\OpBW{a\pare{u;x, \xi}}}{\OpBW{\mathpzc{m}(\xi)}}
	=\frac{1}{\ii} \ \OpBW{-\partial_ \xi \mathpzc{m}(\xi)\partial_x a\pare{u;x, \xi}} + \OpBW{r\pare{u;x, \xi}} + R\bra{u},\label{commuemmino}\\
&\comm{\OpBW{a\pare{u;x, \xi}}}{\OpBW{b\pare{u;x, \xi}}}
=\frac{1}{\ii} \ \OpBW{\pbra{a}{b}\pare{u;x, \xi}} + \OpBW{r_{\geq 2}\pare{u;x, \xi}} + R_{\geq 2}\bra{u},
\end{align}
where: 
\begin{itemize}
	\item  $ \pbra{a}{b}$ is the Poisson bracket defined as 
	\be \label{poibra}
	  \pbra{a}{b}(u;x,\xi)\defeq \partial_\xi a(u;x,\xi) \  \partial_x b(u;x,\xi) - \partial_x a(u;x,\xi) \  \partial_\xi b(u;x,\xi) \in \Gamma^{m+m'- 1}_{\geq 2}\bra{\epsilon_0}  ;
	  \ee 
	\item     $r(u;x,\xi)$ is a real-to-real symbol in  $  \Gamma^{m+m'-3}\bra{\epsilon_0}$ while $ r_{\geq 2}(u;x,\xi)$ is a real-to-real non-homogeneous  symbol in  $  \Gamma^{m+m'- 3}_{\geq 2}\bra{\epsilon_0} $;
	\item    $R\bra{u}$ is a smoothing remainder in ${\mathcal{R}}^{-\vr+m+m'}[\epsilon_0]$ while $ R_{\geq 2}\bra{u}$ is a non-homogeneous smoothing remainder in 
	${\mathcal{R}}^{-\vr+m+m'}_{\geq 2}[\epsilon_0] $  .
\end{itemize}
Moreover if $a\in \Gamma_{\geq 2}^m[\epsilon_0]$ then the symbol $r(u;x,\xi)$ and the smoothing remainder $R\bra{u}$ in \eqref{commuemmino} are respectively in $\Gamma^{m+m'-3}_{\geq 2}\bra{\epsilon_0}$ and ${\mathcal{R}}^{-\vr+m+m'}_{\geq 2}[\epsilon_0]$.
\end{proposition}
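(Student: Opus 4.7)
The plan is to reduce the proposition to the classical Weyl symbolic calculus on $\bR\times\bR$ and then to handle the Bony–Weyl cut-off $\chi_\delta$ that distinguishes $\opbw$ from the full Weyl quantization $\opw$. By Definition~\ref{quantizationtotale}, $\opbw(a)=\opw(a_\chi)$ and $\opbw(b)=\opw(b_\chi)$, so the main object of study is the Weyl composition $\opw(a_\chi)\opw(b_\chi)=\opw(a_\chi \#_W b_\chi)$, where $\#_W$ denotes the Moyal product.

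The central computation is the asymptotic expansion
\begin{equation*}
a\#_W b \sim \sum_{k\geq 0}\frac{1}{k!}\left(\frac{1}{2\ii}\right)^{\!k}\bigl(\partial_{\xi_1}\partial_{x_2}-\partial_{x_1}\partial_{\xi_2}\bigr)^{\!k}(a\otimes b)\Big|_{x_1=x_2=x,\,\xi_1=\xi_2=\xi},
\end{equation*}
whose $k$-th term is a symbol of order $m+m'-k$. Forming the commutator $[\opw(a_\chi),\opw(b_\chi)]$ amounts to swapping $a$ and $b$, which multiplies the $k$-th term by $(-1)^k$; consequently all even terms cancel and only $k=1,3,5,\ldots$ survive. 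The $k=1$ contribution equals $\tfrac{1}{\ii}\pbra{a}{b}$; the next surviving term (from $k=3$) is of order $m+m'-3$ and furnishes the symbol $r_{\geq 2}$. Truncating the series at sufficiently many steps and estimating the tail via Proposition~\ref{215} leaves a smoothing operator in $\cR^{-\vr+m+m'}_{\geq 2}[\epsilon_0]$. Parity and reality structures are preserved at every step, since odd powers of the antisymmetric operator $\partial_{\xi_1}\partial_{x_2}-\partial_{x_1}\partial_{\xi_2}$ combine with the $\tfrac{1}{\ii}$ prefactor to restore real-to-real symbols. The $\geq 2$ bookkeeping is automatic, as every surviving term contains a product of $a$ and $b$, hence at least two powers of $u$.

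The principal technical obstacle is replacing $\opw$ by $\opbw$ in the final formula. The cut-off $\chi_\delta(\cdot/\langle 2\xi\rangle)$ localizes to the paradifferential cone $|k|\lesssim\langle\xi\rangle$, and in general $\opbw(a\#_W b)\neq\opw(a_\chi\#_W b_\chi)$. However, the discrepancy between the two has Fourier support off the paradifferential cone, where repeated integration by parts against the Weyl oscillatory phase converts the frequency separation into arbitrary gains of regularity; the resulting error is absorbed into $R_{\geq 2}$. The first identity of the proposition is then the special case $b=\mathpzc{m}(\xi)$: since $\mathpzc{m}$ is $x$-independent, $\pbra{a}{\mathpzc{m}}=-\partial_\xi\mathpzc{m}\,\partial_x a$, matching the stated principal part; moreover $\mathpzc{m}$ carries no $u$-dependence, so the remainder symbol $r$ and the smoothing operator $R$ inherit the full class of $a$ rather than the $\geq 2$ restriction. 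The last assertion, for $a\in\Gamma^m_{\geq 2}[\epsilon_0]$, follows because vanishing at order $\geq 2$ in $u$ is preserved by differentiation in $x$ and $\xi$ and hence propagates through every term of the Moyal expansion.
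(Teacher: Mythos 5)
Your proposal is correct and follows essentially the same route as the source the paper relies on: the paper gives no proof of this proposition but cites \cite[Proposition 3.12]{BD2018}, whose argument is exactly the Weyl/Moyal composition expansion (with cancellation of even-order terms in the commutator, the $k=1$ term producing $\tfrac{1}{\ii}\pbra{a}{b}$, the $k=3$ term producing the order $m+m'-3$ symbol) combined with a spectral-localization argument to absorb the discrepancy caused by the paradifferential cut-off into the smoothing remainder. Your bookkeeping of the homogeneity degrees, the reality condition, and the special case $b=\mathpzc{m}(\xi)$ all match the statement, so no gap to report.
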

\begin{remark}
	
  In our application  the Fourier multiplier $\mathpzc{m}(\xi)$ shall be the dispersion relation which is in   in $\wt\Gamma_0^\alpha$ with  $\alpha\in (0,1)$. In this case the symbol 
	$
	-\partial_ \xi \mathpzc{m}(\xi)\partial_x a\pare{u;x, \xi}
	$
	in \eqref{commuemmino}, which belongs to $\Gamma^{m-(1-\alpha)}[\epsilon_0]$,  becomes lower order with respect to $a(u;x,\xi)$. 
\end{remark}

{The following lemma, which is a consequence of Proposition 2.15 (items $(ii)$ and $(iv)$) in \cite{BMM2022}, shall be use below.} 
\begin{lemma}\label{nuovetto}
	Let $m,m', m_0\in \R$, $\vr\geq 0$, $\epsilon_0>0$,  $M(u) $ be a $m$-operator in 
	$ \mM^{m}[\epsilon_0] $  and $ \mathtt{p}(\xi)$ a symbol in $ \tilde \Gamma_0^{m_0}$ .
	Then:
	\begin{enumerate}
		\item If  $c(u)$ is a homogeneous symbol in $\tilde{\Gamma}_1^{m'}$, 
		$$
		b_1(u):= c(-\ii  \mathtt{p}(D)u;x,\xi) \quad \text{and} \quad 
		b_{\geq 2}(u;x,\x):= c(M(u)u;x,\x)
		$$
		are symbols respectively in $\tilde \Gamma^{m'}_{1}$ and  $ \Gamma_{\geq 2}^{m'}[\epsilon_0]$;
		\item If $Q(u)$ is a  homogeneous smoothing operator in $\tilde{\mR}_1^{-\vr}$, 
			$$
		\tilde R_1(u):= Q(-\ii  \mathtt{p}(D)u) \quad \text{and} \quad 
		R_{\geq 2}(u):= Q(M(u)u)
		$$
		are smoothing operators respectively in  $\tilde{\mR}_1^{-\vr+\max\{0,m_0\}}$ and $\mR^{-\vr+\max\{0,m\}}_{\geq 2}[\epsilon_0]$;
		\item \label{item:nuovetto3} If $ R\pare{u} \in \cR^{-\vr}_{\geq 2}\bra{\epsilon_0} $ and $ \mathsf{a}\pare{u;x, \xi}\in \Gamma^m_{\geq 2}\bra{\epsilon_0} $, $ \vr > m $ then
		\begin{align*}
		R\pare{u}\circ \OpBW{\mathsf{a}\pare{u;x, \xi} } \in \cR^{-\vr + m}_{\geq 2}\bra{\epsilon_0},
		&&
		 \OpBW{\mathsf{a}\pare{u;x, \xi} } \circ R\pare{u} \in \cR^{-\vr + m}_{\geq 2}\bra{\epsilon_0}. 
		\end{align*}
	\end{enumerate}
	
\end{lemma}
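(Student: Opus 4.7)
The plan is to verify the three items by directly substituting into the defining Fourier-series representations \eqref{espr.hom.sym} and \eqref{smoocara0} and reading off the bounds \eqref{homosymbo}, \eqref{nonhomosymbo}, \eqref{eq:bound_fourier_representation_m_operators}, \eqref{piove}. This is the scalar, real-valued counterpart of Proposition 2.15 of \cite{BMM2022}, and the reduction $\tilde M(u):=\Pi_0^\bot M(\Pi_0^\bot u)\Pi_0^\bot$ noted in the remark after Definition \ref{def:moperators} allows one to import the compositional calculus from \cite{BMM2022} directly; the reality conditions \eqref{eq:reality_homogeneous_symbol}, \eqref{areal}, \eqref{eq:reality_cond_reminder} are preserved automatically at each step since both $\mathtt{p}(D)$ and $M(u)$ are real operators acting on real functions.

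For item $(1)$, write $c(v;x,\xi) = \sum_{j\neq 0} c_j(\xi)\, v_j\, e^{\ii j x}$; substituting $v = -\ii\mathtt{p}(D)u$ produces $b_1(u;x,\xi) = \sum_{j\neq 0}\bigl(-\ii\mathtt{p}(j) c_j(\xi)\bigr) u_j\, e^{\ii j x}$. Since $|\mathtt{p}(j)| \lesssim \langle j\rangle^{m_0}$, the new coefficients satisfy \eqref{homosymbo} at order $m'$ once $\mu$ is enlarged by $\max\{0,m_0\}$, so $b_1 \in \tilde{\Gamma}_1^{m'}$. For $b_{\geq 2}(u;x,\xi) = c(M(u)u;x,\xi)$, one differentiates in $\xi$ and $x$ termwise, applies \eqref{homosymbo} and Cauchy--Schwarz to obtain the crude pointwise bound
$$|\partial_\xi^\beta\partial_x^\gamma b_{\geq 2}(u;x,\xi)| \lesssim \langle\xi\rangle^{m'-\beta}\, \|M(u)u\|_{H^{s_1}}$$
for some $s_1 = s_1(\mu,\gamma)$, and finally plugs in the $m$-operator estimate derived from \eqref{piove} with $v=u$, namely $\|M(u)u\|_{H^{s_1}} \lesssim \|u\|_{s_0}^2\, \|u\|_{s_1+\max\{0,m\}}$; this yields \eqref{nonhomosymbo} provided $s_0$ is chosen large enough as in \eqref{eq:parameters}.

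Item $(2)$ is strictly analogous at the operator level. For $\tilde R_1(u) = Q(-\ii\mathtt{p}(D)u)$ the new Fourier coefficients $-\ii\mathtt{p}(j-k)Q_{j,k}$ acquire a factor $\langle j-k\rangle^{m_0} \lesssim \max\{|j-k|,|k|\}^{\max\{0,m_0\}}$, which trims the decay in \eqref{eq:bound_fourier_representation_m_operators} from $-\vr$ to $-\vr+\max\{0,m_0\}$; for $R_{\geq 2}(u) = Q(M(u)u)$ one chains the homogeneous bilinear tame bound \eqref{stimettasmoothing} for $Q$ with the $m$-operator estimate for $M(u)u$ applied at indices $s$ and $s_0$, and the resulting $\max\{0,m\}$ loss exactly matches the advertised smoothing order of the composite.

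For item $(3)$, note first that Proposition \ref{215} identifies $\OpBW{\mathsf{a}(u;x,\xi)}$ as a non-homogeneous $m$-operator in $\mM^m_{\geq 2}[\epsilon_0]$, so both $R(u)$ and $\OpBW{\mathsf{a}}$ already carry the quadratic vanishing in $u$ required by the $\mR^{-\vr+m}_{\geq 2}[\epsilon_0]$ class. Chaining the smoothing estimate \eqref{piove} for $R$ at the intermediate regularity index $s-m$ with the paradifferential estimate \eqref{eq:2123} for $\OpBW{\mathsf{a}}$ (and likewise chaining their $s_0$-counterparts) yields the composite bound \eqref{piove} at order $-\vr+m$; the hypothesis $\vr>m$ ensures the composite is genuinely regularizing, and the reverse ordering $\OpBW{\mathsf{a}}\circ R(u)$ is treated identically by swapping the roles of the two factors. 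The principal technical obstacle is bookkeeping: one must match intermediate Sobolev indices so that both the high-norm and low-norm terms in the two triple-product estimates close cleanly, and this is precisely what the parameter hierarchy $s\geq s_0\gg \vr > m$ built into \eqref{eq:parameters} is designed to provide.
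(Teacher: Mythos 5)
Your proposal is correct in substance, but it is worth noting that the paper does not actually prove this lemma: it is stated as a direct consequence of Proposition 2.15 (items (ii) and (iv)) of \cite{BMM2022}, imported via the observation (Remark after Definition \ref{def:moperators}) that the scalar, real, average-preserving setting embeds into the framework of \cite{BMM2022} through $\tilde M(u)=\Pi_0^\bot M(\Pi_0^\bot u)\Pi_0^\bot$. What you do instead is reprove the cited result from scratch by substituting into \eqref{espr.hom.sym} and \eqref{smoocara0} and chaining \eqref{homosymbo}, \eqref{nonhomosymbo}, \eqref{eq:bound_fourier_representation_m_operators}, \eqref{piove}, \eqref{stimettasmoothing} and \eqref{eq:2123}; this is exactly the mechanism behind the reference's proof, so the two routes are equivalent, with yours being self-contained and the paper's being shorter. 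Your individual verifications are sound: the coefficient bound $|\mathtt{p}(j-k)Q_{j,k}|\lesssim \max\{|j-k|,|k|\}^{\max\{0,m_0\}-\vr}\min\{|j-k|,|k|\}^{\mu}$ for item (2), the enlargement of $\mu$ by $\max\{0,m_0\}$ in item (1) (permissible since $\mu$ is a free parameter of the class), and the index-matching $\|M(u)u\|_{s_1}\lesssim\|u\|_{s_0}^2\|u\|_{s_1+\max\{0,m\}}$ followed by enlarging $s_0$ are all the right moves. One small imprecision: your parenthetical claim that the reality conditions are ``preserved automatically'' because $\mathtt{p}(D)$ is real is not quite right --- if $\mathtt{p}$ is real-to-real then $-\ii\mathtt{p}(D)$ maps real functions to purely imaginary ones, and the composed coefficients satisfy $\overline{b_j(\xi)}=-b_{-j}(-\xi)$ rather than \eqref{eq:reality_homogeneous_symbol}; this is harmless here because the lemma asserts only membership in the (complex-coefficient) symbol and operator classes, and the sign flip is exactly what the paper exploits later (e.g.\ in \eqref{betachoice}), but you should not present it as automatic preservation of reality.
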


\section{Para-differential  reduction}

\begin{notation}
\label{notation:r}
From now on we denote with $ r \pare{f;x, \xi} = r_1 \pare{f;x, \xi} + r_{\geq 2} \pare{f;x, \xi} $ any  symbol in the space $ \Gamma^{0}\bra{\epsilon_0}  $ with $ r_1\pare{f;x, \xi} \in\tilde{\Gamma}^0_1 $ and $ r_{\geq 2}\pare{f;x, \xi}\in \Gamma^0_{\geq 2}\bra{\epsilon_0} $ such that 
\begin{equation}\label{eq:rreal}
\bar{r}\pare{f;x, \xi} = - r\pare{f;x, -\xi} .
\end{equation}
  The explicit expression of $ r \pare{f;x, \xi}$ may vary from line to line. 
\end{notation}
The symbol $ r\pare{f;x, \xi} $ defined in \Cref{notation:r} is such that the operator $ \ii \OpBW{r\pare{f;x, \xi}} $ is real-to-real. \\

In this section we para-linearize the equation of motion and we reduce it to a cubic equation up to a (already quadratic) smoothing remainder.

\begin{prop}[Paralinearization of the $ \alpha $-SQG patch equation]
\label{prop:paralinearization_1}
Let $ \alpha\in\pare{0,1}\cup \pare{1, 2} $, $ \Omega \in \bR $,  $ \varrho>0$.
There is $s_0 >0$ and $\epsilon_0 >0$ such that for 
 $ f\in \Ball{}{s_0} $ the evolution equation  \eqref{eq:SQG_Hamiltonian} has the form 
 \begin{equation}
 	\label{eq:paralinearized_1}
 	f_t +\partial_x \circ \OpBW{\pare{ 1+\nu\pare{f; x} } L_\alpha\pare{\xi} + \Omega + V\pare{f; x} + P\pare{f; x, \xi}  }  \  f \\
 	= R\pare{f} f .
 \end{equation}
where:
\begin{enumerate}[\rm i)]

\item $L_\alpha \pare{\xi} $ is the real-to-real Fourier multiplier  in $ \tilde{\Gamma}^{\max\set{0, \alpha-1}}_0 $ (cf. \Cref{def:Fourier_mult}), defined in \Cref{eq:Lalpha};

\item $ \nu\pare{f;x} $ and $ V\pare{f;x} $  are real functions in $ \cF\bra{\epsilon_0}$;

\item  $ P \pare{ f;x,\xi } $ is a real-to-real symbol in  $ \Gamma^{-1}\bra{\epsilon_0}  $ (see \Cref{def:symbols}),

\item $ R$ is a smoothing operator in $\cR^{-\vr}\bra{\epsilon_0}  $ (see \Cref{def:moperators}).
\end{enumerate}

In particular , setting $ \Omega = -\bV_\alpha $ and noticing that $\nu\pare{f;x} \in \cF\bra{\epsilon_0} $, \Cref{notation:r} and after a harmless relabeling of $ V\pare{f;x} $ we rewrite \cref{eq:paralinearized_1} as
\begin{equation}
	\label{eq:paralinearized_2}
	f_t +  \ii \  \OpBW{ V\pare{f; x} \ \xi +\pare{ 1+\nu\pare{f; x} }  \dot \omega_\alpha \pare{\xi} +  r\pare{f; x, \xi}  }  \  f \\
	= R\pare{f} f   .
\end{equation}

\end{prop}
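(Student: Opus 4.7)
\bigskip

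\noindent\textbf{Proof plan.} The strategy is to insert the explicit formula \eqref{eq:gradient-pseudoenergy} for $\nabla E_{\alpha}(f)$ into the Hamiltonian form \eqref{eq:SQG_Hamiltonian} and then paralinearize the resulting singular integral operator following the same template as Theorem 4.1 of \cite{BCGS2023}, adapted to the sublinear range $\alpha\in(0,1)$. Writing $H_{\alpha,\Omega}(f) = E_\alpha(f) + \Omega J(f)$, the contribution $\partial_x \nabla (\Omega J)(f) = \Omega \, \partial_x f = \partial_x \circ \OpBW{\Omega}\, f$ is already of the desired form, so the only real work is to paralinearize the singular integral operator
\[
\mathcal{S}(f)(x) \defeq \fint \frac{1+2f(y)+\sqrt{1+2f(x)}\,\partial_y[\sqrt{1+2f(y)}\sin(x-y)]}{[1+2f(x)+1+2f(y)-2\sqrt{1+2f(x)}\sqrt{1+2f(y)}\cos(x-y)]^{\alpha/2}}\,\dd y.
\]

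\noindent\textbf{Paralinearization step.} For every nonlinear factor appearing in the numerator of $\mathcal{S}(f)$ one applies Bony paralinearization, writing $\sqrt{1+2f} = 1 + \OpBW{m(f;x)}f + (\text{smoother})$ and similarly for products and derivatives. The denominator is the more delicate piece: after writing $A(f;x,y) \defeq 1+2f(x)+1+2f(y)-2\sqrt{(1+2f(x))(1+2f(y))}\cos(x-y)$, one observes that at $f=0$ it reduces to $4\sin^2\bigl(\tfrac{x-y}{2}\bigr)$, which is precisely the kernel producing the constant-coefficient Fourier multiplier $\av{D}^{\alpha-1}$ — hence the leading order part $L_\alpha(\xi)$ of Lemma \ref{lem:linearization}. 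A Taylor expansion of $A(f;x,y)^{-\alpha/2}$ around $y=x$, combined with the quasi-linear paralinearization of the coefficients, identifies the top-order contribution as the para-product of a function $(1+\nu(f;x))$ with $L_\alpha(D)$, i.e. $\OpBW{(1+\nu(f;x)) L_\alpha(\xi)}$. The transport coefficient $V(f;x)$ arises from the terms with a $\partial_y$ landing on the parametrization factor $\sin(x-y)$ (producing $\cos(x-y)$ and hence a principal symbol of order one in $\xi$ after composition with $\partial_x$). All subleading variable-coefficient symbols produced in the Taylor expansion are of order $-1$ or less in $\xi$ and, after invoking \Cref{rem:simb1}, are packaged into $P(f;x,\xi)\in \Gamma^{-1}[\epsilon_0]$; the paraproducts–versus–remainders decomposition of Bony produces error operators that, thanks to \Cref{prop:composition_BW} and the smoothing estimate \eqref{piove}, lie in $\cR^{-\varrho}[\epsilon_0]$ and constitute $R(f)f$.

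\noindent\textbf{Symmetries and reality.} The Hamiltonian nature of the equation and the reality of $f$ force the symbols $\nu(f;x), V(f;x)$ to be real, and $P(f;x,\xi)$ to satisfy the real-to-real condition \eqref{areal}; indeed the paralinearization of a real-valued function produces real-to-real para-multipliers, and the singular kernel $A(f;x,y)^{-\alpha/2}$ is symmetric in $(x,y)$, so the Weyl quantization preserves all the required structural properties. Collecting $V(f;x)\xi$ from the transport term and absorbing the constant piece $\Omega = -\bV_\alpha$ via the identity $L_\alpha(\xi) = \bV_\alpha + \dot{\omega}_\alpha(\xi)/\xi + \text{l.o.t.}$ (see \Cref{eq:asympt_L1} and \Cref{def:omega}), the final form \eqref{eq:paralinearized_2} is obtained after a harmless relabelling of $V$ and an absorption of lower-order real-to-real symbols into $r(f;x,\xi)\in\Gamma^0[\epsilon_0]$ with the sign convention \eqref{eq:rreal}.

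\noindent\textbf{Main obstacle.} The delicate point is the extraction of the top-order variable coefficient $(1+\nu(f;x))L_\alpha(\xi)$ from the singular integral: one must verify that the Taylor expansion of $A(f;x,y)^{-\alpha/2}$ in the difference $z = x-y$, paired with the Bony paralinearization of the coefficients in $f$, indeed reconstructs a Weyl-quantized para-symbol of the declared class, and that the error between the full singular integral and this para-symbol gains $\varrho$ derivatives. For $\alpha\in(0,1)$ the kernel is less singular than in \cite{BCGS2023}, so the principal estimates are in fact easier; the care required is in tracking that the symbol class memberships and the reality conditions are preserved at each step, using \Cref{prop:composition_BW} and \Cref{nuovetto} to absorb all subleading contributions systematically.
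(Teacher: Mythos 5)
Your outline follows the same route the paper itself takes: the paper does not reprove this proposition but imports it from \cite[Theorem 4.1]{BCGS2023} (as stated just after \eqref{eq:paralinearized_2_intro}), and your sketch --- paralinearizing the singular integral \eqref{eq:gradient-pseudoenergy}, extracting $(1+\nu(f;x))L_\alpha(\xi)$ from the kernel whose $f=0$ limit is $\bigl[4\sin^2\bigl(\tfrac{x-y}{2}\bigr)\bigr]^{-\alpha/2}$, and then passing to \eqref{eq:paralinearized_2} via the exact identity $\xi L_\alpha(\xi)=\bV_\alpha\xi+\dot\omega_\alpha(\xi)$ with the choice $\Omega=-\bV_\alpha$ --- is precisely the strategy of that reference. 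Be aware, though, that what you wrote is a plan rather than a proof: the substantive work (carrying out the Bony/Taylor expansion of $A(f;x,y)^{-\alpha/2}$, verifying the symbol-class memberships, and showing the remainder genuinely gains $\varrho$ derivatives in the sense of \eqref{piove}) is asserted rather than executed, which is acceptable here only because the paper likewise treats the statement as a citation.
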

\begin{remark}
	In view of the second bullet in \Cref{rem:smoo}, the equation \ref{eq:paralinearized_2} can be written as 
\be\label{puppa}
	f_t= X\pare{f}= -\ii \dot \omega(D)f+ M_{\mathtt{SQG}}\pare{f}f,
\ee
	where $M_{\mathtt{SQG}}\pare{f}$ is a real $1$-operator in $ \mM^1[\epsilon_0]$.
\end{remark}

\subsection{Paradifferential quadratic reduction}
In the present section we suppress the quadratic components of the paradifferential term
\begin{equation}
\label{eq:paradifferentiatosuppress}
 \OpBW{\pare{1+\nu\pare{f;x}} \  \dot{\omega}_\alpha\pare{\xi} + V\pare{f;x} \  \xi + r\pare{f;x, \xi}} , 
\end{equation}
appearing in \Cref{eq:paralinearized_2}. We will perform a reduction in decreasing order (up to order $ -\varrho $)  in the linear part ($\mathpzc{O}\pare{f}$)  of the equation \eqref{eq:paralinearized_2}. All said and done we prove the following result:

\begin{proposition}\label{prop61}
There is $\varrho_0\defeq \vr_0(\alpha)>0$ such that for any $\vr >\varrho_0$  there are $s_0,\, \epsilon_0>0$ such that 
for any solution $f \in B_{s_0}(I;\epsilon_0) $ of  \eqref{eq:paralinearized_2}, there exists a 
real-to-real invertible linear map $ {\bf B}\pare{f}$ such that  the following holds true: 
\\[1mm]
$(i)$ {\bf Boundedness:} ${\bf B}\pare{f}$ and ${\bf B}\pare{f}^{-1}$ are bounded in Sobolev spaces, namely: for any $s\in \R$ there is $ \epsilon_0':= \epsilon_0'(s)\in (0,\epsilon_0] $ such that for any $f \in B_{s_0}\pare{ I;\epsilon_0' }$ and $ \zeta \in H^s_0 $ 
\begin{equation}\label{stimaBione}
	\norm{ {\bf B}\pare{f}\zeta}_s+ \norm{{\bf B}\pare{f}^{-1}\zeta}_s \lesssim \norm{ \zeta}_s
\end{equation}
$(ii)$ {\bf Conjugation:} If $f$ solves \eqref{eq:paralinearized_2} then  $z\defeq {\bf B}\pare{f}f$ solves 
\begin{equation} \label{teo61}
	\partial_t z = -\ii\Opbw{  \dot{\omega}_\alpha\pare{\xi}+ q_{\geq 2}\pare{f;x,\xi} +{r}_{\geq 2}\pare{f;x,\xi}}z
	+ R\pare{f}z
\end{equation}
where:
\begin{itemize}
	\item $\dot{\omega}_\alpha\pare{\xi} \in \widetilde\Gamma^{\alpha}_0$ is the Fourier multiplier  defined in \Cref{omeghino};
	\item $q_{\geq 2}\pare{f;x,\xi}$ is a non-homogeneous real, odd in $ \xi $, symbol  in $ \Gamma_{\geq 2 }^1[\epsilon_0]$;
	\item $r_{\geq 2}\pare{f;x}$ is a non-homogeneous symbol in $\Gamma_{\geq 2}^0[\epsilon_0]$ satisfying \eqref{eq:rreal}; 
	\item $R\pare{f}$ is a real, smoothing operator in $ \mR^{-\vr+\vr_0}[\epsilon_0]$.
\end{itemize} 
Moreover the operator $ \ii \ \Opbw{  \dot{\omega}_\alpha\pare{\xi}+ q_{\geq 2}\pare{f;x,\xi} +{r}_{\geq 2}\pare{f;x,\xi}} $ is real-to-real. 
\end{proposition}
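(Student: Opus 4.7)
My plan is a paradifferential Birkhoff-type reduction that iteratively removes the homogeneous linear-in-$f$ contributions in the paradifferential symbol of \eqref{eq:paralinearized_2}, while preserving both the dispersive operator $\Opbw{\dot\omega_\alpha(\xi)}$ and the real-to-real structure. Splitting $V = V_1 + V_{\geq 2}$, $\nu = \nu_1 + \nu_{\geq 2}$ and $r = r_1 + r_{\geq 2}$ according to Definition \ref{def:symbols}, the homogeneous linear-in-$f$ pieces to be eliminated have orders $1$, $\alpha$, and $0$ respectively, so three families of terms of decreasing order must be normalized.

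\textbf{The basic step.} At stage $k$ the equation will have the shape
\begin{equation*}
\partial_t f + \ii\,\Opbw{\dot\omega_\alpha(\xi) + a_1^{(k)}(f;x,\xi) + a_{\geq 2}^{(k)}(f;x,\xi)}\,f = R^{(k)}(f)f,
\end{equation*}
where $a_1^{(k)}\in\tilde\Gamma^{m_k}_1$ is the residual top-order linear-in-$f$ symbol. I would look for a change of variables $z_k \defeq \Phi_k^1(f) f$, where $\Phi_k^\tau(f)$ is the linear flow of $\ii\,\Opbw{g^{(k)}(f;x,\xi)}$ and the generator $g^{(k)}\in\tilde\Gamma^{m_k}_1$ is chosen so that the order-$m_k$ linear-in-$f$ contribution is killed. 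Using $\partial_t f = -\ii\dot\omega_\alpha(D)f + O(f^2)$ and writing $a_1^{(k)}(f;x,\xi) = \sum_{j\neq 0} a_j^{(k)}(\xi) f_j e^{\ii jx}$, this reduces to the homological equation
\begin{equation*}
g^{(k)}(-\ii\dot\omega_\alpha(D)f;x,\xi) + a_1^{(k)}(f;x,\xi) = 0 \quad \Longrightarrow \quad g_j^{(k)}(\xi) = \frac{a_j^{(k)}(\xi)}{\ii\,\dot\omega_\alpha(j)},
\end{equation*}
which is well-posed by the non-degeneracy \eqref{nondegenere} and the asymptotics $|\dot\omega_\alpha(j)|\gtrsim |j|^\alpha$ coming from Definition \ref{def:omega}, giving only a harmless polynomial-in-$j$ loss compatible with $g^{(k)}\in\tilde\Gamma^{m_k}_1$.

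\textbf{Order reduction and iteration.} The crucial gain of the conjugation is computed via Proposition \ref{prop:composition_BW}: since $\dot\omega_\alpha\in\tilde\Gamma_0^\alpha$ with $\alpha<1$, the commutator
\begin{equation*}
[\Opbw{g^{(k)}}, \Opbw{\dot\omega_\alpha(\xi)}] = -\tfrac{1}{\ii}\,\Opbw{\partial_\xi\dot\omega_\alpha(\xi)\,\partial_x g^{(k)}(f;x,\xi)} + (\text{lower order})
\end{equation*}
has order $m_k - (1-\alpha)$, so each stage strictly decreases by $1-\alpha>0$ the order of the surviving linear-in-$f$ symbol. The $\partial_t g^{(k)} + a_1^{(k)}$ contribution cancels by construction, the degree-$\geq 2$ pieces produced by the conjugation are absorbed into $a_{\geq 2}^{(k+1)}$ and $R^{(k+1)}(f)$ through the nonlinear substitution rules of Lemma \ref{nuovetto} together with Proposition \ref{prop:composition_BW}, and the reality/oddness-in-$\xi$ conditions are preserved step by step. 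Iterating to normalize in turn the pieces of orders $1$, $\alpha$, $0$ and the newly produced lower-order ones, after $N\sim \varrho_0/(1-\alpha)$ steps every residual linear-in-$f$ paradifferential contribution has order $\leq -(\varrho-\varrho_0)$ and, by Proposition \ref{215}, defines a smoothing operator in $\mR^{-\varrho+\varrho_0}[\epsilon_0]$ that can be folded into $R(f)$; this fixes the quantitative dependence of $\varrho_0$ on $\alpha$.

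\textbf{Boundedness and main obstacle.} The delicate point is establishing \eqref{stimaBione} for each flow $\Phi_k^\tau(f)$, since $g^{(k)}$ may have positive order. The key observation is that the real-to-real structure of $a_1^{(k)}$ combined with the reality and oddness of $\dot\omega_\alpha$ forces $g^{(k)}$ to be a real-valued symbol and $\ii\,\Opbw{g^{(k)}}$ to be real-to-real and skew-adjoint modulo smoothing (cf.\ \eqref{A1b}); an $L^2$ energy estimate upgraded by commutator bounds then yields Sobolev boundedness of $\Phi_k^\tau(f)$ uniformly for $\tau\in[0,1]$ and small $\|f\|_{s_0}$. Setting $\mathbf{B}(f)\defeq \Phi_N^1(f)\circ\cdots\circ\Phi_1^1(f)$ gives \eqref{teo61}. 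The principal difficulty is the bookkeeping: as $\alpha\to 1^-$ the per-step gain $1-\alpha$ vanishes and $N$ diverges, so the interaction between the many compositions, the non-homogeneous Poisson brackets of Proposition \ref{prop:composition_BW}, and the nonlinear substitution estimates of Lemma \ref{nuovetto} must be carefully tracked; this is both the technical heart of the reduction and the source of the breakdown of the scheme at the endpoint $\alpha=1$.
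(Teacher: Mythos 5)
Your overall strategy---iterated conjugation by flows of $1$-homogeneous paradifferential generators, homological equations solved by dividing by $\ii\,\dot\omega_\alpha(j)$ (legitimate by \eqref{nondegenere}), a gain of $\ell=1-\alpha$ per step, and $O\pare{\varrho/(1-\alpha)}$ steps overall---is exactly the scheme the paper uses for all the linear symbols of order $\leq\alpha$ (its Steps 2 and 3, which rest on \Cref{prop:Egorov2}). The genuine gap is in your treatment of the order-one transport term $V_1\pare{f;x}\xi$, which you propose to handle uniformly with a generic order-one generator and the symbolic calculus of \Cref{prop:composition_BW}. This does not close: if $g^{(0)}$ has order one, every iterated commutator $\mathrm{ad}^k_{\ii\opbw(g^{(0)})}\pare{\OpBW{V\xi}}$ still has order one (the Poisson bracket of two order-one symbols is of order one), so the Lie expansion of $\Phi\,\OpBW{V\xi}\,\Phi^{-1}$ neither terminates nor gains order, and you cannot conclude that the conjugated operator is again of the form $\OpBW{q_{\geq 2}}$ with $q_{\geq 2}$ a \emph{real, odd-in-$\xi$} symbol of order one plus a smoothing remainder---which is precisely the structure needed for the quartic energy estimate downstream. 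Relatedly, the boundedness you single out as the main difficulty is not the real obstruction: \Cref{lem:flussoGGG} grants \eqref{stimaBione} only for generators of the forms \eqref{eq:g1}, \eqref{eq:g2}, \eqref{eq:g3}, i.e.\ either order $\leq\alpha$ or the special order-one form; a generic order-one generator is outside its scope.

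The paper resolves the order-one step differently: it takes the first generator in the form \eqref{eq:g1}, $g=\frac{\beta(f;x)}{1+\tau\beta_x(f;x)}\,\xi$, whose time-one flow is the paracomposition by the diffeomorphism $x\mapsto x+\beta(f;x)$, and invokes the Egorov-type \Cref{prop:Egorov}: the conjugated symbol is a pull-back $a\pare{u;y,\xi\,\partial_y\Psi^{-1}}|_{y=\Psi(u;x)}$ (so a transport symbol stays a transport symbol with a real coefficient), and \eqref{detconju} gives $\partial_t\Phi_g\,\Phi_g^{-1}=\ii\,\OpBW{\beta\pare{-\ii\dot\omega_\alpha(D)f;x}\xi+\cdots}$, leading to the same homological equation $\beta_j=-(V_1)_j/\pare{\ii\dot\omega_\alpha(j)}$ you wrote. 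Once this single step is done with the paracomposition machinery, the residual linear symbols have order $\leq\alpha<1$, the commutator with $\dot\omega_\alpha$ does give the $(1-\alpha)$-gain you describe, and your iteration coincides with the paper's. So the proposal is correct in architecture but is missing the one ingredient (the diffeomorphism flow and its Egorov theorem) that makes the first, order-one reduction legitimate.
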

\begin{notation}\label{notation:quasilinear_symbol}
	We denote with $ q_{\geq 2}\pare{f;x, \xi} $ any real-valued, odd in $ \xi $, symbol  in the space $  \Gamma^1_{\geq 2}\bra{\epsilon_0} $.  The explicit values of $ q_{\geq 2} $  may implicitly vary from line to line.
\end{notation}
The symbol $ q_{\geq 2}\pare{f;x, \xi} $ defined in \Cref{notation:r} is such that the operator $ \ii \OpBW{q_{\geq 2}\pare{f;x, \xi}} $ is real-to-real and $ L^2 $-energy neutral. \\

The rest of the section is devoted to the proof of Proposition \ref{prop61}. \\

The map linear map ${\bf B}\pare{f}$ has the form 
$$
{\bf B}\pare{f}\defeq \Phi\pare{f}^{(N)}\circ \Phi\pare{f}^{(N-1)}\circ \dots \circ \Phi^{(0)}\pare{f}, \quad N\defeq N(\varrho, \alpha)>0
$$
for suitable transformations $  \Phi\pare{f}^{(j)}$, $j=0, \dots, N$ obtained iteratively as flows of paradifferential PDEs as in \Cref{flusso1para}.

\noindent {\bf Step $1$.} (Reduction of the linear transport term)

We first  construct $ \Phi^{(0)}\pare{f}$ to eliminate the linear component in the transport symbol 
\be \label{traportodecomposto} 
V\pare{f;x} \xi = V_1\pare{f;x}\xi+V_{\geq 2}\pare{f;x}\xi
\ee
in \Cref{eq:paralinearized_2}. To do so we look for a transformation of the form $\Phi^{(0)}\pare{f}\defeq \Phi_g^\tau\pare{f}_{|\tau=1}$ where  $ \Phi^\tau_g\pare{f} $ is the unique solution of \cref{flusso1para} and generating symbol $ g $ is defined as in \cref{eq:g1}.

First of all, thanks to \eqref{stime flusso elementary}, the map $\Phi^{(0)}\pare{f}$ satisfies the estimate \eqref{stimaBione}. Then, the variable  
\be\label{fizero}
w_0\defeq \Phi^{(0)}\pare{f}f,
\ee
solves (cf. \cref{eq:paralinearized_1})
\begin{subequations}
	\label{eq:first_conjugation_flow1}
	\begin{align}
		\pa_t w_0=&-\ii  \Phi^{(0)}\pare{f} \  \OpBW{ V\pare{f; x} \ \xi +\pare{ 1+\nu\pare{f; x} }  \dot{\omega}_\alpha\pare{\xi} +  r\pare{f; x, \xi}  } \Phi^{(0)}\pare{f}^{-1} \  w_0 \label{line1}\\
		& + \pa_t\Phi^{(0)}\pare{f}\Phi^{(0)}\pare{f}^{-1}w_0\label{line2}\\
		&+ \Phi^{(0)}\pare{f} R\pare{f} \Phi^{(0)}\pare{f}^{-1} \  w_0.\label{line3}
	\end{align}
\end{subequations}
We now compute each term in  \cref{line1,line2,line3}. 

We apply \Cref{prop:Egorov}, \cref{item:Egorovi} and obtain that

\begin{align}
	\eqref{line1}= -\ii \OpBW{ V_1\pare{f; x} \ \xi +\pare{ 1+\nu^{\pare{0}}_1\pare{f; x} }  \dot{\omega}_\alpha\pare{\xi} + q_{\geq 2}\pare{f;x, \xi} +  r\pare{f; x, \xi}  } w_0 + R\pare{f} w_0, 
\end{align}
where $ V_1 $ is the linear component of $ V $ (see \eqref{traportodecomposto}), the function $ \nu^{\pare{0}}_1 \in \tilde{\cF}^\bR_1 $ and $ q_{\geq 2} $ is the quasilinear symbol of \Cref{notation:quasilinear_symbol}. We apply \Cref{prop:Egorov}, $(iii)$ and $(ii)$ and obtain
\begin{align*}
	\eqref{line2}
	= & \ 
	\ii \  \OpBW{   \beta  \pare{ -\ii \ \dot \omega_\alpha (D)f; x }\ \xi+  \ q_{\geq 2}\pare{f,x, \xi} } w_0 + R \pare{f} w_0, 
	\\
	\eqref{line3} 
	= &  R\pare{f} w_0,  
\end{align*}
where we denote with $R\pare{f}$ a smoothing remainder in $ \mR^{-\vr+ 1 }[\epsilon_0]$ which can change from line to line.
Equation \eqref{eq:first_conjugation_flow1} is thus transformed into
\begin{multline*}
	\partial_t w_0 = -\ii \ \OpBW{\pare{V_1\pare{f;x} - \beta\pare{-\ii \dot{\omega}_\alpha\pare{D}f; x}}\xi
		+\pare{ 1+\nu^{\pare{0}}_1\pare{f; x} }  \dot{\omega}_\alpha\pare{\xi} + q_{\geq 2}\pare{f;x, \xi} +  r\pare{f; x, \xi}  } w_0 
	\\
	+ R\pare{f} w_0. 
\end{multline*}
In Fourier, we have the homological equation
\begin{equation*}
	0=V_1\pare{f;x} - \beta\pare{-\ii \dot{\omega}_\alpha\pare{D}f; x}
	= 
	\sum _{j\in\bZ \setminus \{0\}} \pare{ \pare{V_1}_j  +\ii \  \dot{\omega}_\alpha\pare{j} {\beta}_j } {f}_j \ e^{ \ii j x}, 
\end{equation*}
so that, thanks to the non-degeneracy property \eqref{nondegenere},  we can define 
\begin{equation}\label{eq:betaj}
	\beta_j \defeq -\frac{\pare{V_1}_j}{\ii \ \dot{\omega}_\alpha\pare{j}}. 
\end{equation}
Notice that since $ \xi\mapsto \dot{\omega}_\alpha\pare{\xi} $ is odd and $ V_1 $ is real valued the Fourier coefficients in \cref{eq:betaj} satisfy the reality condition $ \overline{\beta_j} = \beta_{-j} $, hence we have that
\begin{equation}\label{betachoice}
	\beta\pare{f; x} \defeq- \sum_{j\in\bZ\setminus \{0\}}  \frac{\pare{V_1}_j}{\ii \ \dot{\omega}_\alpha\pare{j}} \ {f}_j \ e^{\ii jx} \in \tilde{\cF}_1. 
\end{equation}
Finally, with the choice \eqref{betachoice}, the equation for $w_0$ becomes 
\be\label{dabliu0}
\partial_t w_0 = -\ii \ \OpBW{\pare{ 1+\nu^{\pare{0}}_1\pare{f; x} }  \dot{\omega}_\alpha\pare{\xi} + q_{\geq 2}\pare{f;x, \xi} +  r\pare{f; x, \xi}  } w_0 
+ R\pare{f} w_0. 
\ee

\noindent {\bf Step $2$.} (Reduction of the linear symbols of positive order)
Next we find a transformation in order to eliminate all the linear symbols of positive order in \eqref{dabliu0}. To do so we shall prove the following inductive Lemma: 
\begin{lemma}
For any $n\geq 0$ there are $n+1$ real, bounded and invertible transformations
 $$
  \Phi^{(0)}\pare{f}, \dots , \Phi^{(n)}\pare{f} 
 $$
 such that:
 \begin{enumerate}
 	\item {\bf Boundedness:} For any $j=0, \dots, n$ the map $\Phi^{(j)}\pare{f}$ satisfies the bound \eqref{stimaBione};
 	\item {\bf Conjugation:} If $f$ solves \eqref{eq:paralinearized_2} then 
\be \label{wenne}
 	w_n\defeq \Phi^{(n)}\pare{f}\circ \dots \circ \Phi^{(0)}\pare{f}f
 	\ee
 	solves 
 	\be \label{ridotta_n}
 	\partial_t w_n = -\ii \ \OpBW{  \dot{\omega}_\alpha\pare{\xi} + b_1^{(n)}\pare{f;x, \xi}+  q_{\geq 2}^{(n)}\pare{f;x, \xi} +  r\pare{f; x, \xi}  } w_n
 	+ R\pare{f} w_n. 
 	\ee
 where:
\begin{itemize}
	\item $\dot{\omega}_\alpha\pare{\xi} \in \widetilde\Gamma^{\alpha}_0$ is the Fourier multiplier  defined in \Cref{omeghino};
	\item $b_1^{(n)}$ is a  homogeneous real symbol in $ \tilde \Gamma_1^{\alpha- n\ell}$ odd in $ \xi $;
	\item $q_{\geq 2}^{(n)}$ is a non-homogeneous real symbol  as in \Cref{notation:quasilinear_symbol};
	\item $r\pare{f;x}$ is a symbol as in \Cref{notation:r}; 
	\item $R\pare{f}$ is a real, smoothing operator in $ \mR^{-\vr+\vr_0}[\epsilon_0]$ with $\vr_0=\vr_0(n)\defeq n+1$.
\end{itemize}  	
 \end{enumerate}
\end{lemma}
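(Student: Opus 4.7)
The plan is to argue by induction on $n$, with the base case $n=0$ given directly by the equation \eqref{dabliu0} derived in Step 1. Indeed, writing $(1+\nu_1^{(0)}(f;x))\,\dot\omega_\alpha(\xi)=\dot\omega_\alpha(\xi)+b_1^{(0)}(f;x,\xi)$ with $b_1^{(0)}(f;x,\xi):=\nu_1^{(0)}(f;x)\,\dot\omega_\alpha(\xi)$ produces a homogeneous symbol in $\widetilde\Gamma_1^{\alpha}$, and upon setting $\ell:=1-\alpha$ one has $b_1^{(0)}\in\widetilde\Gamma_1^{\alpha-0\cdot\ell}$, as required; the identifications of $q_{\geq 2}^{(0)}$, $r$ and $R$ are immediate, with $\vr_0(0)=1$ matching the smoothing loss already quoted in \eqref{dabliu0}. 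The inductive hypothesis is then the validity of \eqref{ridotta_n} at level $n$, and the goal is to construct $\Phi^{(n+1)}(f)$ satisfying \eqref{stimaBione} such that the reduction \eqref{ridotta_n} holds at level $n+1$ with updated loss $\vr_0(n+1)=n+2$.

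\textbf{Construction of the generator.}
I would define $\Phi^{(n+1)}(f)$ as the time-one flow of the linear paradifferential vector field $\ii\,\OpBW{g^{(n+1)}(f;x,\xi)}$, with generator homogeneous of degree one in $f$ of the form
\[
g^{(n+1)}(f;x,\xi)=\sum_{j\neq 0}g_j(\xi)\,f_j\,e^{\ii j x}\in\widetilde\Gamma_1^{\alpha-n\ell}.
\]
The coefficients are chosen so that the homological identity
\[
g^{(n+1)}\!\bigl(-\ii\dot\omega_\alpha(D)f;x,\xi\bigr)=b_1^{(n)}(f;x,\xi)
\]
holds, which by the non-degeneracy \eqref{nondegenere} admits the explicit solution
\[
g_j(\xi)=\frac{\ii\,(b_1^{(n)})_j(\xi)}{\dot\omega_\alpha(j)},\qquad j\neq 0.
\]
Since $|\dot\omega_\alpha(j)|$ is bounded away from zero for $|j|\geq 1$ by \eqref{nondegenere} and grows polynomially in $|j|$, the bound \eqref{homosymbo} for $(b_1^{(n)})_j$ is inherited by $g_j$ at the same order $\alpha-n\ell$ and with the same parameter $\mu$. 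The real-to-real symmetry is preserved by this choice, since $\dot\omega_\alpha$ is real and odd and $b_1^{(n)}$ is real and odd in $\xi$ by the inductive hypothesis. Boundedness \eqref{stimaBione} for $\Phi^{(n+1)}(f)^{\pm 1}$ then follows from standard estimates for paradifferential flows generated by symbols of order strictly less than one, since $\alpha-n\ell<1$ for every $n\geq 0$.

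\textbf{Conjugation and cancellation.}
Setting $w_{n+1}:=\Phi^{(n+1)}(f)\,w_n$ and using the Egorov-type symbolic calculus provided by \Cref{prop:composition_BW}, one computes at first order in $g^{(n+1)}$
\[
\Phi^{(n+1)}(-\ii\OpBW{\dot\omega_\alpha})(\Phi^{(n+1)})^{-1}=-\ii\OpBW{\dot\omega_\alpha}-\OpBW{\partial_\xi\dot\omega_\alpha\,\partial_x g^{(n+1)}}+\text{l.o.t.},
\]
whose Poisson-bracket correction lies in $\widetilde\Gamma_1^{\alpha-n\ell+(\alpha-1)}=\widetilde\Gamma_1^{\alpha-(n+1)\ell}$, while the conjugation of $-\ii\OpBW{b_1^{(n)}}$ yields $-\ii\OpBW{b_1^{(n)}}$ up to quadratic-in-$f$ contributions. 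Exploiting \eqref{puppa} to substitute $\partial_t f=-\ii\dot\omega_\alpha(D)f+M_{\mathtt{SQG}}(f)f$, and the linearity of $g^{(n+1)}$ in $f$,
\[
(\partial_t\Phi^{(n+1)})(\Phi^{(n+1)})^{-1}=\ii\OpBW{g^{(n+1)}(-\ii\dot\omega_\alpha(D)f;x,\xi)}+\ii\OpBW{g^{(n+1)}(M_{\mathtt{SQG}}(f)f;x,\xi)}+\text{l.o.t.}
\]
By the homological identity the two linear-in-$f$ contributions of order $\alpha-n\ell$ cancel exactly, and the only residual leading linear symbol is the Poisson-bracket correction above, which by definition becomes the new $b_1^{(n+1)}\in\widetilde\Gamma_1^{\alpha-(n+1)\ell}$.

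\textbf{Bookkeeping, main obstacle, and conclusion.}
The main technical obstacle is the careful classification of all non-homogeneous contributions produced by the conjugation. The symbol $g^{(n+1)}(M_{\mathtt{SQG}}(f)f;x,\xi)$ lies in $\Gamma_{\geq 2}^{\alpha-n\ell}[\epsilon_0]\subset\Gamma_{\geq 2}^{1}[\epsilon_0]$ by \Cref{nuovetto}~(1) and is absorbed into the updated $q_{\geq 2}^{(n+1)}$. The quadratic-in-$f$ tails produced by the Egorov expansion of the conjugations of $-\ii\OpBW{b_1^{(n)}}$ and of $-\ii\OpBW{q_{\geq 2}^{(n)}+r}$ are likewise absorbed into $q_{\geq 2}^{(n+1)}$ or $r_{\geq 2}$ using \Cref{prop:composition_BW} and \Cref{nuovetto}~(3), while the subleading terms in the symbolic calculus contribute smoothing remainders. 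Since $\Phi^{(n+1)}$ and its inverse are bounded in all Sobolev spaces, the conjugation $\Phi^{(n+1)}R(f)(\Phi^{(n+1)})^{-1}$ remains in $\mathcal{R}^{-\vr+\vr_0(n)}[\epsilon_0]$, but the new smoothing pieces stemming from the Egorov expansion cost one extra derivative, yielding the updated loss $\vr_0(n+1)=\vr_0(n)+1=n+2$. This completes the inductive step; the iteration will be stopped at $N\sim\alpha/(1-\alpha)$, where $\alpha-N\ell\leq 0$ and no further positive-order linear symbol remains, fixing $\vr_0=\vr_0(\alpha)$ in \Cref{prop61}.
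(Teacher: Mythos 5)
Your proposal is correct and follows essentially the same route as the paper: the same base case from Step 1 with $b_1^{(0)}=\nu_1^{(0)}\dot\omega_\alpha$, the same generator (your Fourier formula $g_j=\ii (b_1^{(n)})_j/\dot\omega_\alpha(j)$ is exactly the paper's $g=b_1^{(n)}(\ii\dot\omega_\alpha^{-1}(D)f;x,\xi)$), the same homological cancellation against $\partial_t\Phi^{(n+1)}\,(\Phi^{(n+1)})^{-1}$ via \eqref{puppa}, and the same bookkeeping giving $b_1^{(n+1)}\in\tilde\Gamma_1^{\alpha-(n+1)\ell}$ and the one-derivative increment $\vr_0(n+1)=\vr_0(n)+1$; the only difference is that the paper outsources the symbolic computations to \Cref{prop:Egorov2} while you sketch the first-order Poisson-bracket expansion directly (with a harmless constant-factor slip in the commutator term).
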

\begin{proof}
\noindent {\bf Initialization case: $n=0$.} For $n=0$ the thesis is exactly the conclusion of Step $1$  choosing the map $ \Phi\pare{f}^{(0)}$ as in \eqref{fizero} and denoting the symbol   
$$
 b_1^{(0)}\pare{f;x,\xi}\defeq \nu_1^{(0)}\pare{f;x} \dot{\omega}(\xi) \in \tilde \Gamma_1^\alpha.
$$
Notice that $ b_1^{(0)}\pare{f;x,\xi} $ is real valued and odd in $ \xi $, so that $ \ii  \OpBW{b_1^{(0)}\pare{f;x,\xi}} $ is real-to-real. 

\noindent {\bf Inductive step:} We look for a transformation $ \Phi^{(n+1)}\pare{f}$ to eliminate the linear symbol $b_1^{(n)}$ in \eqref{ridotta_n} up to lower order terms.
We define $\Phi^{(n+1)}\pare{f}\defeq \Phi_g^\tau\pare{f}|_{\tau=1}$ the time-$1$ flow of 
$$
\partial_\tau \Phi_g^\tau\pare{f}= \ii \OpBW{g\pare{f;x,\xi}}\Phi_g^\tau\pare{f}, \quad \Phi_g^0\pare{f}= {\rm Id},  
$$
where $ g\pare{f;x,\xi}$ is the real, odd in $ \xi $, symbol in $ \tilde \Gamma_1^{\alpha- n\ell }$ defined by
\be
g\pare{f;x,\xi}\defeq b_1^{(n)}\pare{\ii\dot \omega_\alpha^{-1}(D)f;x, \xi} .
\ee 
First of all, thanks to \eqref{stime flusso elementary}, the map $\Phi^{(n+1)}\pare{f}$ satisfies the estimate \eqref{stimaBione}. Then, the variable  
\be\label{fienne}
w_{n+1}\defeq \Phi^{(n+1)}\pare{f}w_n,
\ee
solves (cf. \eqref{ridotta_n})
\begin{subequations}
	\label{eq:first_conjugation_flownn}
	\begin{align}
		\pa_t w_{n+1}=&-\ii  \Phi^{(n+1)}\pare{f} \  \OpBW{  \dot{\omega}_\alpha\pare{\xi} + b_1^{(n)}\pare{f;x, \xi}+  q_{\geq 2}^{(n)}\pare{f;x, \xi} +  r\pare{f; x, \xi} } \Phi^{(n+1)}\pare{f}^{-1} \  w_{n+1} \label{line1n}\\
		& + \pa_t\Phi^{(n+1)}\pare{f}\Phi^{(n+1)}\pare{f}^{-1}w_{n+1}\label{line2n}\\
		&+ \Phi^{(n+1)}\pare{f} R\pare{f} \Phi^{(n+1)}\pare{f}^{-1} \  w_{n+1}.\label{line3n}
	\end{align}
\end{subequations}
We now compute each term in  \cref{line1n,line2n,line3n}. \\
We apply \Cref{prop:Egorov2}, \cref{ennesimpeg} and obtain that
\begin{equation}\label{pin}
	\eqref{line1n}= -\ii \OpBW{ \dot{\omega}_\alpha\pare{\xi} + b_1^{(n)}\pare{f;x, \xi}+ b_1^{(n+1)}\pare{f;x, \xi}+  \tilde q_{\geq 2}^{(n,1)}\pare{f;x, \xi} +  r\pare{f; x, \xi}} w_{n+1} + R\pare{f} w_{n+1}, 
\end{equation}
where $ \tilde q_{\geq 2}^{(n,1)}\pare{f;x, \xi} \in \Gamma_{\geq 2 }^1[\epsilon_0]$,  $b_1^{(n+1)}\pare{f;x, \xi}\in\tilde \Gamma_1^{\alpha- \ell (n+1)} $ are real valued and odd in $ \xi $, $R\pare{f}\in \mathcal{R}^{-\vr}[\epsilon_0]$.  We apply Proposition \ref{prop:Egorov2}, $(iii)$ and obtain
\be\label{dun}
\eqref{line2n} =  \ii \OpBW{   g\pare{-\ii \dot \omega_\alpha(D)f;x,\xi}  + \tilde q_{\geq 2}^{(n,2)}\pare{f;x,\xi} }w_{n+1} + R \pare{f}w_{n+1}
\ee
where $ \tilde q_{\geq 2}^{(n,2)}\in \Gamma_{\geq 2 }^1[\epsilon_0]$, real and odd in $ \xi $, and $ R \pare{f}\in \mathcal{R}^{-\vr} [\epsilon_0]$. Finally, by Proposition \ref{prop:Egorov2} $(ii)$, we get 
\be \label{tren}
\eqref{line3n} 
=  R\pare{f} w_0
\ee
where $R\pare{f}\in\mathcal{R}^{-\vr+\vr_0+1}[\epsilon_0] $.
 Note that the order of the smoothing operator above is $-\vr+\vr_0(n)+1= -\vr+\vr_0(n+1)$. 
 Defining 
 $$
 q_{\geq 2}^{(n+1)}\pare{f;x,\xi}\defeq \tilde q_{\geq 2}^{(n,1)}\pare{f;x,\xi} +\tilde q_{\geq 2}^{(n,2)}\pare{f;x,\xi}
 $$
 and, summarizing all the contribution in \eqref{pin}, \eqref{dun}, \eqref{tren} and noting that 
 $$
  b^{(n)}_1\pare{f;x,\xi}- g\pare{-\ii \dot \omega_\alpha(D)f;x,\xi}=0,
 $$ 
 we get 
 	\be \label{ridotta_n1}
 \partial_t w_{n+1} = -\ii \ \OpBW{  \dot{\omega}_\alpha\pare{\xi} + b_1^{({n+1})}\pare{f;x, \xi}+  q_{\geq 2}^{({n+1})}\pare{f;x, \xi} +  r\pare{f; x, \xi}  } w_{n+1}
 + R\pare{f} w_{n+1}
 \ee
 as claimed in \eqref{ridotta_n}.
\end{proof}
 \noindent {\bf Choice of the number of iterative steps:} We choose the number $n_\alpha$ of iterative steps such that 
 $$
 n_\alpha \geq \frac{\alpha}{1-\alpha}. 
 $$
In this way we can include the symbol $b_{1}^{(n_\alpha)}\in \tilde\Gamma_1^{\alpha -\ell n_\alpha}$ in \eqref{ridotta_n} in the symbol $r$ since $ \alpha -\ell n_\alpha \leq 0$. Relabeling 
\be\label{def_v}
 v\defeq w_{n_\alpha}
 \ee
   the equation for $v$ becomes
	\be \label{ridotta_restart}
\partial_t v = -\ii \ \OpBW{  \dot{\omega}_\alpha\pare{\xi} +  q_{\geq 2}\pare{f;x, \xi} +  r\pare{f; x, \xi}  } v
+ R\pare{f} v,
\ee
with 
\be\label{ridotta_restartq}
  q_{\geq 2}\defeq q_{\geq 2}^{(n_\alpha)}
  \ee
  as in \Cref{notation:quasilinear_symbol}, $ r $ as in \Cref{notation:r} and $R\pare{f} \in  \mathcal{R}^{-\vr+ \underline \vr_0}[\epsilon_0]$ where 
   \be\label{rhozero}
    \underline{\vr_0}:=\vr_0(n_\alpha)=  n_\alpha +1
    \ee
     depends only on $\alpha$.
To prove Proposition \ref{prop61} it is now sufficient to eliminate the linear component, $r_1\pare{f;x,\xi}$, of the symbol 
$$
r\pare{f;x,\xi}=r_1\pare{f;x,\xi}+r_{\geq 2}\pare{f;x,\xi}. 
$$

This is the content of the following iterative Lemma:
\begin{lemma}[Reduction of the linear symbol]
For any $n\geq 0$ there are $n+1$ bounded and invertible transformations
$$
\Psi^{(0)}\pare{f}, \dots , \Psi^{(n)}\pare{f} 
$$
such that 
\begin{enumerate}
	\item {\bf Boundedness:} For any $j=0, \dots, n$ the map $\Psi^{(j)}\pare{f}$ satisfies the bound \eqref{stimaBione};
	\item {\bf Conjugation:} If $f$ solves \eqref{eq:paralinearized_2} and $v$ is the variable in \eqref{def_v} then 
	\be \label{venne}
	v_n\defeq \Psi^{(n)}\pare{f}\circ \dots \circ \Psi^{(0)}\pare{f}v
	\ee
	solves 
	\be \label{ridotta_n1}
	\partial_t v_n = -\ii \ \OpBW{  \dot{\omega}_\alpha\pare{\xi} +  q_{\geq 2}\pare{f;x, \xi} + r_1^{(n)}\pare{f;x, \xi}+ r^{(n)}_{\geq 2}\pare{f; x, \xi}  } v_n
	+ R\pare{f} v_n
	\ee
	where:
	\begin{itemize}
		\item $\dot{\omega}_\alpha\pare{\xi} \in \widetilde\Gamma^{\alpha}_0$ is the Fourier multiplier  defined in \eqref{omeghino};
		\item $r_1^{(n)}$ is a homogeneous symbol in $ \tilde \Gamma_1^{- n\ell}$ satisfying \eqref{eq:rreal} where  we denote with
		\begin{equation}\label{eq:ell}
			\ell \defeq 1-\alpha >0;
		\end{equation}
		\item $q_{\geq 2}$ is a  symbol as in \Cref{notation:quasilinear_symbol};
		\item $r_{\geq 2}\pare{f;x, \xi}$ is a non-homogeneous symbol in $ \Gamma^0_{\geq 2}\bra{\epsilon_0} $ satisfying \eqref{eq:rreal}; 
		\item $R\pare{f}$ is a smoothing operator in $ \mR^{-\vr+\underline{\vr_0}}[\epsilon_0]$ with $\underline{\vr_0}$ given in \eqref{rhozero}.
	\end{itemize}  	
\end{enumerate}
\end{lemma}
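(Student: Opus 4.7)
The strategy is to adapt the inductive scheme used in the preceding Lemma for the symbols $b_1^{(n)}$, but with generators that now have \emph{non-positive} order, so that each step gains $\ell = 1-\alpha$ in negative order on the homogeneous linear part. The base case $n=0$ is immediate: set $\Psi^{(0)}(f) \defeq \mathrm{Id}$, $r_1^{(0)} \defeq r_1$ and $r_{\geq 2}^{(0)} \defeq r_{\geq 2}$, so that \eqref{ridotta_restart} coincides with \eqref{ridotta_n1} at level zero, with $r_1^{(0)} \in \tilde{\Gamma}_1^{0}$ satisfying \eqref{eq:rreal} by \Cref{notation:r}.

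For the inductive step, assuming \eqref{ridotta_n1} at level $n$ with $r_1^{(n)} \in \tilde{\Gamma}_1^{-n\ell}$, I will define $\Psi^{(n+1)}(f) \defeq \Psi^\tau_{g^{(n+1)}}(f)|_{\tau=1}$ as the time-$1$ flow generated by the real-to-real homogeneous symbol
\[
g^{(n+1)}(f;x,\xi) \defeq r_1^{(n)}\bigl(\ii\, \dot{\omega}_\alpha^{-1}(D) f;\, x, \xi\bigr)\in \tilde{\Gamma}_1^{-n\ell},
\]
where the Fourier multiplier $\dot{\omega}_\alpha^{-1}(D)$ has order $-\alpha$ and is well-defined on zero-average functions by the non-degeneracy \eqref{nondegenere}. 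Since $g^{(n+1)}$ has non-positive order, the bound \eqref{stimaBione} for $\Psi^{(n+1)}(f)$ will follow from the same flow estimates used for the $\Phi^{(j)}$ in the preceding Lemma.

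Writing $v_{n+1} \defeq \Psi^{(n+1)}(f) v_n$, the derived equation will split into three contributions parallel to \eqref{line1n}--\eqref{line3n}. Combining the Egorov-type expansion (\Cref{prop:Egorov2}) with the symbolic calculus (\Cref{prop:composition_BW}), and exploiting $\dot{\omega}_\alpha \in \tilde{\Gamma}^\alpha_0$, the principal symbol of the commutator $[\OpBW{g^{(n+1)}},\OpBW{\dot{\omega}_\alpha}]$ is homogeneous linear of order $-n\ell + (\alpha-1) = -(n+1)\ell$ and furnishes the new $r_1^{(n+1)} \in \tilde{\Gamma}_1^{-(n+1)\ell}$; every other generated piece is of non-homogeneous type and is absorbed in $q_{\geq 2}$, $r_{\geq 2}^{(n+1)}$, or the smoothing remainder. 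For the time-derivative contribution $(\partial_t \Psi^{(n+1)})\Psi^{(n+1),-1}$, which produces $\ii \OpBW{g^{(n+1)}(\partial_t f; x, \xi)}$, I will use \eqref{puppa} and \Cref{nuovetto} to replace $\partial_t f$ by $-\ii \dot{\omega}_\alpha(D) f$ up to non-homogeneous contributions; by the very definition of $g^{(n+1)}$ this yields $\ii \OpBW{g^{(n+1)}(-\ii \dot{\omega}_\alpha(D) f; x, \xi)} = \ii \OpBW{r_1^{(n)}(f;x,\xi)}$, canceling exactly the homogeneous linear term $-\ii \OpBW{r_1^{(n)}}$ produced by the conjugation. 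Tracking the loss of derivatives through \Cref{nuovetto} then keeps the smoothing remainder in $\mR^{-\vr + \underline{\vr_0}}[\epsilon_0]$.

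The points requiring real care are: (i) propagating the reality condition \eqref{eq:rreal} for $r_1^{(n+1)}$ and the oddness-in-$\xi$ of the quasi-linear symbol in \Cref{notation:quasilinear_symbol} through each conjugation --- this is automatic because $\dot{\omega}_\alpha$ is real and odd, so $\dot{\omega}_\alpha^{-1}(D)$ preserves the reality classes of the homogeneous symbols, and the Poisson-bracket term in \Cref{prop:composition_BW} pairs symbols of opposite $\xi$-parities; and (ii) solvability of the homological equation, which is pointwise in Fourier thanks to \eqref{nondegenere}, so that no small-divisor issue arises here --- resonances will enter only later, in the genuine Birkhoff step on the smoothing remainder.
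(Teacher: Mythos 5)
Your proposal follows the paper's proof essentially verbatim: same base case $\Psi^{(0)}=\mathrm{Id}$, same generator $g=r_1^{(n)}\bigl(\ii\,\dot\omega_\alpha^{-1}(D)f;x,\xi\bigr)\in\tilde\Gamma_1^{-n\ell}$, same conjugation via \Cref{prop:Egorov2} producing the new homogeneous symbol of order $-(n+1)\ell$ from the commutator with $\dot\omega_\alpha(D)$, and the same cancellation $r_1^{(n)}(f;x,\xi)-g(-\ii\dot\omega_\alpha(D)f;x,\xi)=0$ for the time-derivative contribution, with the smoothing order preserved since the generator has non-positive order. The plan is correct and matches the paper's argument.
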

\begin{proof}
\noindent {\bf Initialization case: $n=0$.} It is sufficient to choose $ \Psi\pare{f}^{(0)}={\rm Id}$ since equation \eqref{ridotta_restart} already has the claimed form \eqref{ridotta_n1}.

\noindent {\bf Inductive step:} We look for a transformation $ \Psi^{(n+1)}\pare{f}$ to eliminate the linear symbol $r_1^{(n)}$ in \eqref{ridotta_n1} up to lower order terms.
We define $\Psi^{(n+1)}\pare{f}\defeq \Phi_g^\tau\pare{f}|_{\tau=1}$ the time-$1$ flow of 
$$
\partial_\tau \Phi_g^\tau\pare{f}= \OpBW{g\pare{f;x,\xi}}\Phi_g^\tau\pare{f}, \quad \Phi_g^0\pare{f}= {\rm Id},  
$$
where 
\be
g\pare{f;x,\xi}\defeq r_1^{(n)}\pare{\ii\dot \omega_\alpha^{-1}(D)f;x, \xi} 
\ee 
 is a  symbol in $ \tilde \Gamma_1^{- n\ell }$ thanks to Lemma \ref{nuovetto}.
Then, by  \eqref{stime flusso elementary}, the map $\Psi^{(n+1)}\pare{f}$ satisfies the estimate \eqref{stimaBione}. Then, the variable  
\be\label{fienne}
v_{n+1}\defeq \Psi^{(n+1)}\pare{f}v_n,
\ee
solves (cf. \eqref{ridotta_n1})
\begin{subequations}
	\label{eq:first_conjugation_flownn1}
	\begin{align}
		\pa_t v_{n+1}=&-\ii  \Phi^{(n+1)}\pare{f} \  \OpBW{  \dot{\omega}_\alpha\pare{\xi} +  q_{\geq 2}\pare{f;x, \xi} + r_1^{(n)}\pare{f;x, \xi}+  r_{\geq 2}^{(n)}\pare{f; x, \xi} } \Phi^{(n+1)}\pare{f}^{-1} \  v_{n+1} \label{line1n1}\\
		& + \pa_t\Phi^{(n+1)}\pare{f}\Phi^{(n+1)}\pare{f}^{-1}v_{n+1}\label{line2n1}\\
		&+ \Phi^{(n+1)}\pare{f} R\pare{f} \Phi^{(n+1)}\pare{f}^{-1} \  v_{n+1}.\label{line3n1}
	\end{align}
\end{subequations}
We now compute each term in  \cref{line1n1,line2n1,line3n1}. 

We apply \Cref{prop:Egorov2}, \cref{ennesimpeg} and obtain that

\begin{equation}\label{pin1}
	\eqref{line1n}= -\ii \OpBW{ \dot{\omega}_\alpha\pare{\xi} +q_{\geq 2}\pare{f;x, \xi}+ r_1^{(n)}\pare{f;x, \xi}+ r_1^{(n+1)}\pare{f;x, \xi}+  \tilde r_{\geq 2}^{(n,1)}\pare{f;x, \xi} } v_{n+1} + R\pare{f} v_{n+1}, 
\end{equation}
where $ \tilde r_{\geq 2}^{(n,1)}\pare{f;x, \xi} \in \Gamma_{\geq 2}^0[\epsilon_0]$,  $r_1^{(n+1)}\pare{f;x, \xi}\in\tilde \Gamma_1^{- \ell (n+1)} $ satisfying \eqref{eq:rreal} and   $R\pare{f}\in \mathcal{R}^{-\vr}[\epsilon_0]$.  We apply Proposition \ref{prop:Egorov2}, $(iii)$ and obtain
\be\label{dun1}
\eqref{line2n1} =  \ii \OpBW{   g\pare{-\ii \dot \omega_\alpha(D)f;x,\xi}  + \tilde r_{\geq 2}^{(n,2)}\pare{f;x,\xi} }v_{n+1} + R \pare{f}v_{n+1}
\ee
where $ \tilde r_{\geq 2}^{(n,2)}\in \Gamma_{\geq 2}^0[\epsilon_0]$ and $ R \pare{f}\in \mathcal{R}^{-\vr} [\epsilon_0]$. Finally, by  Proposition \ref{prop:Egorov2}, $(ii)$, we get 
\be \label{tren1}
\eqref{line3n} 
=  R\pare{f} w_0
\ee
where $R\pare{f}\in\mathcal{R}^{-\vr+\underline{\vr_0}}[\epsilon_0] $.

Note that the order of the smoothing operators remains unchanged.

Defining 
$$
r_{\geq 2}^{(n+1)}\pare{f;x,\xi}\defeq \tilde r_{\geq 2}^{(n,1)}\pare{f;x,\xi} +\tilde r_{\geq 2}^{(n,2)}\pare{f;x,\xi}
$$
and, summarizing all the contribution in \eqref{pin1}, \eqref{dun1}, \eqref{tren1} and noting that 
$$
r^{(n)}_1\pare{f;x,\xi}- g\pare{-\ii \dot \omega_\alpha(D)f;x,\xi}=0,
$$ 
we get 
\be \label{ridotta_n1}
\partial_t v_{n+1} = -\ii \ \OpBW{  \dot{\omega}_\alpha\pare{\xi} + b_1^{({n+1})}\pare{f;x, \xi}+  q_{\geq 2}^{({n+1})}\pare{f;x, \xi} +  r\pare{f; x, \xi}  } v_{n+1}
+ R\pare{f} v_{n+1}
\ee
as claimed in \eqref{ridotta_n}.
\end{proof}
\noindent {\bf Choice of the number of iterative steps:} We choose the number $\bar{n}_{\alpha,\vr}$ of iterative steps such that 
$$
\bar{n}_{\alpha,\vr} \geq \frac{\vr }{1-\alpha}. 
$$
In this way we can include the operator  $\OpBW{r_{1}^{(\bar n_{\alpha,\vr})}}\in \tilde {\mathcal{R}}_1^{ -\ell \bar n_{\alpha,\vr} }$ in \eqref{ridotta_n1} in the smoothing remainder $r$ since $  -\ell \bar n_{\alpha,\vr}  \leq- \vr $. Relabeling 
\be\label{def_w}
z\defeq v_{\bar{n}_{\alpha,\vr} }
\ee
the equation for $z$ becomes
\be \label{ridotta_finale}
\partial_t z = -\ii \ \OpBW{  \dot{\omega}_\alpha\pare{\xi} +  q_{\geq 2}\pare{f;x, \xi} +  r_{\geq 2}\pare{f; x, \xi}  } z
+ R\pare{f} z,
\ee
which has the claimed form \eqref{teo61}.

\section{The normal form reduction for the smoothing remainder}
 In this section we eliminate, from the smoothing remainder
 \begin{equation}\label{eq:remainder_decomposition}
  R\pare{f}= R_1\pare{f}+ R_{\geq 2}\pare{f},
 \end{equation}
 appearing in \Cref{teo61} 
its linear (with respect to $f$) component up to a higher homogeneity smoothing remainder.

\begin{proposition}\label{prop:BNF}
Let $ \alpha\in\pare{0,1}  $.
There exists $ \underline{\vr} \defeq\underline{\vr}\pare{\alpha} $, such that for any $ \vr\geq \underline{\vr} $  there is $ \underline{s_0} > 0 $ such that for any $ s\geq \underline{s_0} $, there is  $  \underline{\epsilon_0}\pare{s} > 0 $ such that
 for any $ 0<\epsilon_0 \leq \underline{\epsilon_0}\pare{s} $ and any real solution $ f \in B_{\underline{s_0}}\pare{I;\epsilon_0} \cap C\pare{I;H^s_0 } $ 
 of the equation \eqref{eq:paralinearized_1} there exists a real
 invertible operator $ \Psi_{{\tt bir}}\pare{f;t} $ on $ H^s_0  $ satisfying the following:
 
\begin{itemize}
	\item {\bf Boundedness: }
	for any $ s\geq s_0  $ there are 
	$ C \defeq C_s>0$ and $ \epsilon_0' (s) \in (0, \epsilon_0) $, 
	such that for any
	$ f, \, v  \in B_{\underline{s_0}}\pare{I;\epsilon_0'(s)} \cap C \pare{  I; H^s_0   }  $  and 
	for any  $ t \in I $, 
	\begin{equation}\label{Birvariable}
		\Big\|   \Psi_{{\tt bir}}\pare{f;t}  v  \Big\|_{s} +
		\Big\|   \Psi_{{\tt bir}}\pare{f;t} ^{-1} v  \Big\|_{s}
		\leq C\left( \| v \|_{s}+ \| f\|_s\|v\|_{s_0}\right) \, ;
	\end{equation}
	
	\item {\bf Conjugation: } Let $z= {\bf B}\pare{f}f$ the auxiliary variable which solves \eqref{teo61}, then the variable 
	\be \label{eq:gdef}
	 g :=\Psi_{{\tt bir}}\pare{f;t} z= \Psi_{{\tt bir}}\pare{f;t}  {\mathbf{B}}\pare{f;t} f
	\ee
	 solves the equation
	\begin{equation}\label{BNF12}
		\pa_{t} g +  \ii \ \OpBW{  \dot{\omega}_\alpha\pare{\xi} +  q_{\geq 2}\pare{f;x, \xi} +  r_{\geq 2}\pare{f; x, \xi}  }g
		=   R_{\geq 2}\pare{f}  g
	\end{equation}
	where

	\begin{itemize}
		\item $\dot{\omega}_\alpha\pare{\xi} \in \widetilde\Gamma^{\alpha}_0$ is the Fourier multiplier  defined in \Cref{omeghino};

		\item $q_{\geq 2}\pare{f;x,\xi}$ is a  symbol as in \Cref{notation:quasilinear_symbol};
		\item $r_{\geq 2}\pare{f;x, \xi}$ is a non-homogeneous symbol satisfying \eqref{eq:rreal}; 
		
		\item
		$ R_{\geq 2}\pare{f} $ is a real smoothing operator  in 
		$  \mathcal{R}^{- \vr +\underline{\vr} }_{\geq 2 }\bra{\epsilon_0} $.
	\end{itemize}

\end{itemize}
\end{proposition}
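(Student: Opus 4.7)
The plan is to construct $\Psi_{\texttt{bir}}(f;t)$ as the time-one flow map of a linear PDE whose generator is a homogeneous smoothing operator $\mathcal{Q}(f) \in \tilde{\mathcal{R}}_1^{-\varrho+\underline{\varrho}}$, depending linearly on $f$. The coefficients of $\mathcal{Q}(f)$ will be selected so that, in the new variable $g = \Psi_{\texttt{bir}}(f;t) z$, the linear-in-$f$ smoothing contribution $R_1(f)$ in the decomposition \eqref{eq:remainder_decomposition} is cancelled, while all other terms are either paradifferential (of the claimed form) or quadratic smoothing remainders.

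\textbf{Homological equation.} Write $R_1(f) v = \sum_{j,k \neq 0} (R_1)_{j,k}\, f_{j-k}\, v_k\, e^{\ii j x}$ and look for $\mathcal{Q}(f)$ with the same Fourier structure $\mathcal{Q}(f) v = \sum_{j,k \neq 0} Q_{j,k}\, f_{j-k}\, v_k\, e^{\ii j x}$. Differentiating the relation $g = \Psi_{\texttt{bir}}(f) z$ in $t$, using equation \eqref{teo61} for $z$ and that $\partial_t f = -\ii \dot\omega_\alpha(D) f + M_{\texttt{SQG}}(f) f$ from \eqref{puppa}, the cancellation of the linear smoothing terms reduces to
\begin{equation*}
\mathcal{Q}(-\ii \dot\omega_\alpha(D) f) + \bigl[\ii \dot\omega_\alpha(D), \mathcal{Q}(f)\bigr] - R_1(f) = 0.
\end{equation*}
In Fourier this yields, for $j,k \in \Z\setminus\{0\}$ with $n := j-k \neq 0$,
\begin{equation*}
\ii\bigl(\dot\omega_\alpha(j) - \dot\omega_\alpha(k) - \dot\omega_\alpha(n)\bigr)\, Q_{j,k} = (R_1)_{j,k}.
\end{equation*}
Since the linear transport piece $\bV_\alpha \xi$ of $\omega_\alpha$ cancels in the combination $\omega_\alpha(j)-\omega_\alpha(k)-\omega_\alpha(n)$ when $j = k+n$, the non-resonance Lemma \ref{lem:nonres_cond} gives $|\dot\omega_\alpha(j) - \dot\omega_\alpha(k) - \dot\omega_\alpha(n)| \geq \omega_\alpha(2) > 0$, so
\begin{equation*}
Q_{j,k} := \frac{(R_1)_{j,k}}{\ii\bigl(\dot\omega_\alpha(j) - \dot\omega_\alpha(k) - \dot\omega_\alpha(n)\bigr)}
\end{equation*}
is well-defined. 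The bound \eqref{eq:bound_fourier_representation_m_operators} for $Q_{j,k}$ with the same $\varrho$ is inherited from $R_1$; the reality condition $\overline{Q_{j,k}} = Q_{-j,-k}$ follows from $\overline{(R_1)_{j,k}} = (R_1)_{-j,-k}$ together with the oddness and realness of $\dot\omega_\alpha$.

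\textbf{Flow and conjugation.} I would then set $\Psi_{\texttt{bir}}(f;t) := \Phi^{\tau}|_{\tau=1}$ where $\partial_\tau \Phi^\tau = \mathcal{Q}(f) \Phi^\tau$, $\Phi^0 = \mathrm{Id}$. Since $\mathcal{Q}(f)$ is smoothing, the flow exists, is invertible and satisfies the tame estimate \eqref{Birvariable} via Gronwall and the estimate \eqref{stimettasmoothing}. Computing $\partial_t g = (\partial_t \Psi_{\texttt{bir}}(f)) z + \Psi_{\texttt{bir}}(f) \partial_t z$ and expanding $\Psi_{\texttt{bir}}(f) = \mathrm{Id} + \mathcal{Q}(f) + O(f^2)$, the $O(f)$ contributions cancel by the homological equation. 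The remaining pieces split into: (a) commutators $[\mathcal{Q}(f), \ii \OpBW{\dot\omega_\alpha(\xi) + q_{\geq 2} + r_{\geq 2}}]$, which by item \ref{item:nuovetto3} of Lemma \ref{nuovetto} produce operators in $\mathcal{R}^{-\varrho + \underline\varrho}_{\geq 2}[\epsilon_0]$; (b) $\mathcal{Q}(f) R(f) z$ and $R(f) \mathcal{Q}(f) z$, which compose two smoothing operators and are therefore in $\mathcal{R}^{-\varrho + \underline\varrho}_{\geq 2}[\epsilon_0]$ by the last bullet of Remark \ref{rem:smoo}; (c) the mismatch $\mathcal{Q}(\partial_t f) - \mathcal{Q}(-\ii \dot\omega_\alpha(D) f) = \mathcal{Q}(M_{\texttt{SQG}}(f) f)$, which by item (2) of Lemma \ref{nuovetto} lies in $\mathcal{R}^{-\varrho + \underline\varrho}_{\geq 2}[\epsilon_0]$; (d) higher order terms from the Taylor expansion of $\Psi_{\texttt{bir}}$, all composition of smoothing. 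All such operators are absorbed into a new $R_{\geq 2}(f)$ belonging to $\mathcal{R}^{-\varrho + \underline\varrho}_{\geq 2}[\epsilon_0]$, while any residual paradifferential contributions can be absorbed into $q_{\geq 2}$ and $r_{\geq 2}$ without altering their structural properties.

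\textbf{Main obstacle.} The primary technical difficulty is the bookkeeping of the smoothing losses: one must exhibit a single finite $\underline\varrho = \underline\varrho(\alpha)$ such that every commutator, composition, and Taylor correction produced by the conjugation lies in $\mathcal{R}^{-\varrho + \underline\varrho}_{\geq 2}[\epsilon_0]$, and verify that the generator $\mathcal{Q}(f)$ — in particular the small denominator $\dot\omega_\alpha(j)-\dot\omega_\alpha(k)-\dot\omega_\alpha(n)$ — preserves the admissible Fourier growth \eqref{eq:bound_fourier_representation_m_operators} uniformly in $(j,k)$. A secondary, more routine, issue is checking that the reality/real-to-real structure is propagated through the flow and through the identifications of $q_{\geq 2}$, $r_{\geq 2}$, which follows from the real-to-real nature of $\mathcal{Q}(f)$ shown above.
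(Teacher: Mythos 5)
Your proposal follows the paper's proof essentially verbatim: the generator is the $1$-homogeneous smoothing operator whose Fourier coefficients are those of $R_1$ divided by the divisor $\ii\big(\dot\omega_\alpha(j)-\dot\omega_\alpha(k)-\dot\omega_\alpha(j-k)\big)$, which is bounded away from zero by \Cref{lem:nonres_cond}, and the flow, homological equation and quadratic corrections are handled exactly as in \Cref{prop:Egorov_smoothing} and \Cref{nuovetto}. The only discrepancy is the sign in front of $R_1(f)$ in your homological equation --- the cancellation in $\partial_t g$ requires $Q(-\ii\dot\omega_\alpha(D)f)+\comm{Q(f)}{-\ii\dot\omega_\alpha(D)}+R_1(f)=0$, so your $Q_{j,k}$ should carry the opposite sign --- a consistent, purely notational slip that affects nothing structural.
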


In view of \eqref{eq:gdef}, the bounds in  \eqref{Birvariable} and \eqref{stimaBione}  imply in particular that  for any $ s\geq s_0 $,  there exists $ C \defeq C_{s,\alpha} > 0  $ such that
\begin{align}
\label{eq:equivalence_yf}
C^{-1} \norm{ f \pare{t} }_{s} \leq \norm{ g\pare{t} }_{s} \leq C \norm{ f \pare{t}  }_{s} \, , && \forall t \in I \,  . 
\end{align}

\begin{proof}
Notice that
$ R_1\pare{f} $ in \cref{eq:remainder_decomposition} is a real homogenous smoothing operator 
in $ {\wt{\mathcal{R}}}^{ \ -  \vr +\vr_0 }_1 $ , that we expand 
(cf.  \eqref{smoocara0}) as 
\be\label{Qsmo2}
R_1\pare{f}  v = \sum_{n,k, j \in \Z \setminus \{0\}, \atop n+j=k} 
R_{j,k} f_{j-k} v_k e^{\ii j x}\, , \quad  
\quad R_{j,k}  \in \C \, , 
\ee
In order to remove $ R_1 \pare{f} $ 
we conjugate \eqref{ridotta_finale} with  the  flow (cf. \Cref{well-posflusso2})
 \begin{equation}\label{BNFstep1}
\partial_{\tau} \mathcal{\Phi}_{Q}^{\tau}\pare{f}  = Q\pare{f} \mathcal{\Phi}_{Q}^{\tau}\pare{f} \, , 
\qquad  \mathcal{\Phi}_{Q}^{0}\pare{f} = {\rm Id} \, ,  
\end{equation}
generated by the $ 1$-homogenous smoothing operator
\begin{align}\label{omoBNF5}
Q \pare{f} v = \sum_{n,k, j \in \Z \setminus \{0\}, \atop n+j=k} 
 Q _{n,j,k} f_{n} v_j e^{\ii k x}\, , 
&& 
    Q _{n,j,k} \defeq
  \frac{-( R_1)_{n,j,k}}{\ii \big( \dot \omega_\alpha(j) -  \dot \omega_\alpha\pare{k} -
   \dot \omega_\alpha(j-k) \big)} \, , 
\end{align} 
which is  well-defined
by Lemma  \ref{lem:nonres_cond}. Notice that, since $ R_1 $ satisfies \eqref{eq:reality_cond_reminder} (i.e. it is a real-to-real operator) and $ \dot \omega_\alpha(\xi)$ is odd, the coefficients $ Q_{n,j,k} $ satisfy \eqref{eq:reality_cond_reminder} and thus $ Q\pare{f} $ is real-to-real.  We thus apply \Cref{prop:Egorov_smoothing}, \cref{item:Egorov_smoothingi,item:Egorov_smoothingii}
and obtain that
\begin{multline*}
\Phi_{Q}(u)\circ \pare{ \OpBW{\dot{\omega}_\alpha\pare{\xi} + q_{\geq 2}\pare{f;x, \xi} + r_{\geq 2}\pare{f;x, \xi} } + R_1\pare{f}+ R_{\geq 2}\pare{f} } \circ  \Phi_{Q}(u)^{-1}
\\
 =  \OpBW{\dot{\omega}_\alpha\pare{\xi} + q_{\geq 2}\pare{f;x, \xi} + r_{\geq 2}\pare{f;x, \xi} }
+\comm{Q\pare{f}}{\dot{\omega}_\alpha\pare{D}} 
  + R_1\pare{f} + R_{\geq 2}^{1}\pare{f}, 
\end{multline*}
where $ R_{\geq 2}^{1}\pare{f} \in \cR^{-\vr+\vr_0+1}_{\geq 2}\bra{\epsilon_0} $. Next, by \Cref{prop:Egorov_smoothing}-\ref{item:Egorov_smoothing_iii} we obtain that
\begin{equation*}
\partial_t   \Phi_{Q}\pare{f}\circ \Phi_{Q}\pare{f}^{- 1} =- \ii \  Q\pare{ \dot{\omega}_\alpha\pare{D} f} + R_{\geq 2}^{2}\pare{f}, 
\end{equation*}
where $ R_{\geq 2}^{2}\pare{f} \in \cR^{-\vr+\vr_0+1}_{\geq  2}\bra{\epsilon_0}   $. We now prove  that $ Q\pare{f} $
solves 
the homological equation
\begin{equation}\label{omoBNF}
Q \pare{ -\ii \dot{\omega}_\alpha (D) f} + \comm{Q \pare{ f} }{ -\ii \dot{\omega}_\alpha (D) } + R_1\pare{f}=0 \, .
\end{equation}
Writing \eqref{omoBNF5} as
$$ Q\pare{f} v = \sum_{k, j \in \Z \setminus \{0\}} \big[ Q\pare{f} \big]_k^j v_j e^{\ii k x}\quad 
 \text{with}  \big[ Q\pare{f} \big]_k^j \defeq q_{n,j,k} f_{n}, $$
we see that the homological 
equation \eqref{omoBNF} 
amounts to 
$$
\bra{Q(-\ii \dot{\omega}_\alpha (D) f)}^{j}_{k}+  \bra{Q\pare{f}}^j_k \big(
\ii  \dot{\omega}_\alpha (k) - \ii \dot{\omega}_\alpha \pare{j}   \big) + \bra{R_1 \pare{f}}^{j}_{k}=0,\quad \text{ 
 for any} \ j,k\in \Z\setminus\{0\},$$  
and then, recalling \eqref{Qsmo2}, 
to 
$ q_{n,j,k} \, \ii \big(\dot{\omega}_\alpha (k) -\dot{\omega}_\alpha \pare{j}- \dot{\omega}_\alpha (n) \big)+
   (r_1)_{n,j,k}=0 $.
This proves  \eqref{omoBNF}, and, as a consequence, we conclude the proof of   \Cref{prop:BNF} setting $$\underline{\vr}\defeq \vr_0+1 , \quad \Psi_{{\tt bir}}\pare{f;t}\defeq \Phi^1_Q\pare{f;t}.$$ 
\end{proof}

\subsection{Energy estimates and proof of \Cref{thm:main} }

%

The first step is to choose the parameters in \Cref{prop:BNF}. 
In the statement of
 \Cref{prop:BNF} we fix 
   $ \vr \defeq \underline{\vr}\pare{\alpha} $. Then 
   \Cref{prop:BNF} gives us  $ \underline{s_0} >0 $. For any $ s\geq \underline{s_0}$ there is  $ 0< \underline{\epsilon_0}\pare{s}  $ where $\underline{\epsilon_0}\pare{s} $ is
   defined in \Cref{prop:BNF}.

We perform a \( H^s_0 \) energy estimate on \Cref{BNF12}. First, using the equation \eqref{BNF12} for the variable \( g \) and the reality of the symbols $\dot \omega_\alpha(\xi)$  and \( q_{\geq 2}\pare{f;x,\xi} \), we get 

\begin{align}
\ddt \norm{g}_s^2 =  &  \psc{  \ii \comm{\OpBW{q_{\geq 2}\pare{f;x,\xi}}}{ |D|^s\Big.}  g }{|D|^sg}+\psc{|D|^{s}g}{\ii \comm{\OpBW{q_{\geq 2}\pare{f;x,\xi}}}{ |D|^s\Big.}  g} \label{primaenergy}\\ 
&+ \psc{|D|^{s}  B_{\geq2}\pare{f}g}{|D|^sg}+\psc{|D|^{s}g}{|D|^s B_{\geq2}\pare{f}g}\label{secondaenergy}
\end{align}
where the bounded operator \(
B_{\geq2}\pare{f}:= -\ii \ \OpBW{  r_{\geq 2}\pare{f; x, \xi}  }+ R_{\geq 2}\pare{f}
\) satisfies (cf. \Cref{notation:r,eq:2123,piove})
\[
\norm{B_{\geq2}\pare{f}g}_s \lesssim_{s,\alpha} \norm{f}_{s_0}^2 \norm{g}_s+\norm{f}_{s_0} \norm{g}_{s_0}\norm{f}_s.
\]

Next applying \eqref{commuemmino} to the Fourier multiplier \( \mathtt{m}\pare{\xi}:= \av{\xi}^{2s} \in \tilde \Gamma_0^{2s} \) and the symbol \( a:= q_{\geq 2}\in \Gamma_{\geq 2}^1[\epsilon_0] \), together with Proposition \ref{215}, we get 
\[
\av{\eqref{secondaenergy}}+\av{\eqref{primaenergy}} \lesssim_{s,\alpha} \norm{f}_{s_0}^2\norm{g}_s^2+ \norm{f}_{s_0}\norm{f}_s\norm{g}_s\norm{g}_{s_0}.
\]

hence, after an integration in time, we obtain the quartic energy estimate
\[
\norm{ g \pare{t} }_{s}^2 \leq \norm{ g \pare{0} }_{s}^2 +  \bar C_1 \pare{s, \alpha}   \int_0^t
\norm{f\pare{t'}}_{s_0}^2\norm{g(t')}_s^2+ \norm{f\pare{t'}}_{s_0}\norm{f\pare{t'}}_s\norm{g(t')}_s\norm{g(t')}_{s_0} \, \dd t' , \quad \forall 0 < t < T ,
\]
and,
by  \eqref{eq:equivalence_yf}, we deduce
\begin{equation}\label{stimaenf}
\norm{f\pare{t}}_s^2 \leq \bar{C}_2 \pare{s, \alpha}  \pare{\norm{ f \pare{0}}_s^2 
+ \int_0^t\norm{ f\pare{t'}}_s^4 \dd t' } \,  , 
\quad \forall 0 < t < T  \, . 
\end{equation}

The energy estimate \eqref{stimaenf}, the equivalence $ \norm{f}_s\sim\norm{h}_s $ stemming from \eqref{eq:def_f} and 
the local existence result in  \cite{Gancedo2008}
(which amounts to a local existence result 
 for the equation \eqref{eq:paralinearized_1}), 
imply, by a standard 
bootstrap argument,  \Cref{thm:main}. 
\hfill$ \Box $

\appendix

\section{Flows and conjugations}\label{sec:flows}
In this section we prove some abstract results about the conjugation of 
paradifferential operators and smoothing remainders under flows, we shall refer continuously the reader to \cite{BD2018,BFP2018,BMM2022}.\\

Let $ u\in \Ball{}{s_0} $, $ \beta \in \tilde{\cF}_{1} $ and   $ g \defeq g\pare{ u, \tau ;x,\xi }  $ be a  symbol of the form
\begin{align}
	\label{eq:g1}
	g\pare{u, \tau ; x, \xi} \defeq & \ \frac{\beta\pare{u ; x}}{1+\tau \beta_x\pare{u; x }} \ \xi \defeq b\pare{u,\tau; x} \ \xi
	, 
	& \ \in &\  \Gamma^1\bra{\epsilon_0}, 
	& \delta &  =1 ,
	\\
	\label{eq:g2}
	g\pare{u ; x, \xi }= & \ \Re g\pare{u ; x, \xi }
	&  \in &   \  \tilde{\Gamma}^\delta_{1}, 
	& \delta &  \in \left(0, \alpha\right] , \\
	\label{eq:g3}
	g\pare{u ; x, \xi}  &  
	& \ \in &\ \tilde{ \Gamma}^{\delta}_{1}, 
	& \delta &  \leq 0. 
\end{align}
Notice that the symbols in \cref{eq:g2,eq:g3} are considered to be constant in $ \tau $ and linear with respect to $u$. \\

Consider  
the flow $\Phi^\tau_g(u)$, $\tau\in[-1,1]$
defined by 
\begin{align}\label{flusso1para}
	\left\{\begin{aligned}
		&\pa_{\tau}\Phi^{\tau}_{g}(u)=\ii G(u, \tau) \ \Phi^{\tau}_{g}(u)\, \\
		&\Phi^{0}(u)={\rm Id}
	\end{aligned}\right. \ , 
	&&
	G(u)\defeq\OpBW{g(u, \tau ;x,\xi)} \ . 
\end{align}

We have the following.
\begin{lemma}[\bf Linear flows]\label{lem:flussoGGG}
	There are  $s_0,\, \epsilon_0 >0$ such that, for any  
	$u \in \Ball{}{s_0}$,
	the problem \eqref{flusso1para} admits a unique solution $\Phi^{\tau}_{g}(u)$.
	Moreover, for any
	$s\in \R $ we have that $ \Phi_g^\tau (u)\in\cL\pare{H^s_0} $ and there is a constant $C(s)>0$ such that
	\begin{multline}\label{stime flusso elementary}
			\norm{ \Phi_g^\tau (u) w }_{s}+\norm{ \Phi_g^\tau (u)^{-1} w }_{s} \leq   C \pare{ s }  \| w \|_{s}\,, \\ \norm{ \left[\Phi_g^\tau (u) -\uno \right]w }_{s-\delta}+\norm{\left[\Phi_g^\tau (u)^{-1}-\uno\right] w }_{s-\delta} \leq    C \pare{ s } \norm{u}_{s_0} \norm{w}_{s } 
	\end{multline}
	for any $ w\in H^{s}_0$.
\end{lemma}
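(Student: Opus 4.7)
The plan is to treat the three admissible classes of generators \eqref{eq:g1}, \eqref{eq:g2}, \eqref{eq:g3} separately, since each requires a different argument, and then derive the difference estimate uniformly from a Duhamel identity.

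I would first dispatch the \emph{subprincipal case} \eqref{eq:g3}, where $\delta \leq 0$. By Proposition \ref{215}\ref{item:OpBWmaps1}, the operator $G(u,\tau) = \OpBW{g(u;x,\xi)}$ is bounded on every $H^s_0$ with operator norm controlled by $C(s)\norm{u}_{s_0}$. Hence \eqref{flusso1para} is a genuine Banach-space ODE in $\cL(H^s_0)$, which is solved by Picard--Lindel\"of on $\tau\in[-1,1]$ for $\epsilon_0$ small enough; Gronwall's inequality yields $\norm{\Phi_g^\tau(u)w}_s \leq e^{C(s)\norm{u}_{s_0}}\norm{w}_s$. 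Next, for the \emph{dispersive case} \eqref{eq:g2} with $\delta\in(0,\alpha]\subset(0,1)$ and real-valued $g$, the operator $\ii\OpBW{g}$ is no longer bounded, so I would pass through a high-frequency regularization $g_N\defeq \chi(\xi/N)g(u;x,\xi)$, solve the regularized flow (reducing to the previous case), and derive a uniform $H^s_0$ bound by the energy method: using Proposition \ref{prop:composition_BW} with the multiplier $\mathpzc{m}(\xi)=\langle\xi\rangle^{2s}$ one checks that $[\OpBW{g_N},\langle D\rangle^{2s}]$ has order $2s+\delta-1<2s$, while the reality of $g$ together with \eqref{A1b} makes $\OpBW{g_N}$ self-adjoint up to an operator of order $\delta-3$; hence
\begin{equation*}
\ddt \norm{\Phi_{g_N}^\tau(u)w}_s^2 \lesssim_s \norm{u}_{s_0}\,\norm{\Phi_{g_N}^\tau(u)w}_s^2,
\end{equation*}
and Gronwall gives a bound independent of $N$, enabling passage to the limit.

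The \emph{transport case} \eqref{eq:g1} is the main difficulty, because the generator has principal order one and no dispersive or self-adjointness mechanism is available. Here the specific choice $b(u,\tau;x) = \beta(u;x)/(1+\tau\beta_x(u;x))$ is not accidental: the characteristic ODE $\dot{x}(\tau)=b(u,\tau;x(\tau))$ is solved explicitly by $x(\tau,y) = y+\tau\beta(u;y)$, so that $\Phi_g^\tau(u)$ coincides, up to smoothing remainders, with the paracomposition associated to the family of diffeomorphisms $y\mapsto y+\tau\beta(u;y)$ of $\bT$, which is well defined as soon as $\norm{u}_{s_0}$ is small enough that $\sup_{\tau\in[-1,1]}\norm{\tau\beta_x(u;\cdot)}_{L^\infty}\leq 1/2$. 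The $H^s_0$-boundedness of such paracompositions is a classical result proved in \cite[Prop. 3.19]{BD2018} (see also \cite[Appendix]{BMM2022}), which I would simply invoke, having checked that the smallness hypothesis on $\beta$ reduces to a smallness condition on $\norm{u}_{s_0}$ via $\beta\in\tilde\cF_1$. This is the step where the detailed structure of \eqref{eq:g1} is used crucially; a plain energy estimate would not close because the commutator $[\OpBW{g},\langle D\rangle^{2s}]$ is of full order $2s$.

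Once the first bound in \eqref{stime flusso elementary} is established in all three cases, the difference estimate follows uniformly from the Duhamel representation
\begin{equation*}
\Phi_g^\tau(u)-\uno = \int_0^\tau \ii\,\OpBW{g(u,\sigma;x,\xi)}\,\Phi_g^\sigma(u)\,\dd\sigma,
\end{equation*}
combined with the action bound $\norm{\OpBW{g(u,\sigma;\cdot)}v}_{s-\delta}\lesssim_s \norm{u}_{s_0}\norm{v}_s$ from Proposition \ref{215} and the $H^s_0$-boundedness of $\Phi_g^\sigma(u)$ just proved; the estimate for $\Phi_g^\tau(u)^{-1}$ is identical since it solves the analogous backward flow with generator $-\ii G(u,\tau)$. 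The main obstacle, as noted, is case \eqref{eq:g1}: the order-one, non-dispersive, non-skew character of the generator forces one to exploit the paracomposition structure rather than any abstract semigroup argument.
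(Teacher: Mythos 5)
Your treatment of the bounded case \eqref{eq:g3}, of the sublinear case \eqref{eq:g2} (regularization plus uniform energy estimate plus passage to the limit), and the Duhamel argument for the second bound in \eqref{stime flusso elementary} are all correct and consistent with the paper, whose proof is essentially a citation of \cite[Lemma 3.22]{BD2018} together with the observation that the flow preserves zero averages. The problem is your diagnosis of the transport case \eqref{eq:g1}. You claim that ``a plain energy estimate would not close because the commutator $[\OpBW{g},\langle D\rangle^{2s}]$ is of full order $2s$'', but order $2s$ is exactly what is needed, not an obstruction. Since $g=b(u,\tau;x)\xi$ is real, \eqref{A1b} gives $\OpBW{g}^*=\OpBW{g}$ exactly, so in $\frac{\dd}{\dd\tau}\psc{\langle D\rangle^{2s}\Phi w}{\Phi w}$ the two principal contributions combine into $\ii\psc{[\langle D\rangle^{2s},\OpBW{g}]\Phi w}{\Phi w}$; by \Cref{prop:composition_BW} the commutator has order $2s+1-1=2s$ with the gain of one derivative coming from the Poisson bracket, whence $\av{\psc{[\langle D\rangle^{2s},\OpBW{g}]\Phi w}{\Phi w}}\lesssim_s\norm{u}_{s_0}\norm{\Phi w}_s^2$ and Gronwall closes. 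This is precisely the ``standard energy estimate argument'' the paper invokes, and it treats all three generator classes uniformly; the specific form $b=\beta/(1+\tau\beta_x)$ is needed later for the Egorov-type conjugation formulas (\Cref{prop:Egorov}), not for the mere boundedness of the flow.

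Beyond being unnecessary, your paracomposition detour is not self-contained as written: asserting that $\Phi^\tau_g(u)$ ``coincides, up to smoothing remainders, with the paracomposition associated to $y\mapsto y+\tau\beta(u;y)$'' presupposes that the flow exists and acts boundedly on $H^s_0$ (or requires an independent construction of the flow from the paracomposition operator and a correction argument), which is exactly what the lemma is asking you to prove; for an order-one generator the existence itself still has to go through a regularization and a uniform a priori bound, i.e.\ through the very energy estimate you discarded. If you repair the transport case by running the same regularized energy argument you used for \eqref{eq:g2} — noting that exact $L^2$-self-adjointness of $\OpBW{g_N}$ and the one-order gain in the commutator are all that is required — the proof is complete and matches the paper's.
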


\begin{proof}
	The result is classical and follows by standard energy estimate arguments. See, for example, Lemma 3.22 in \cite{BD2018}. Moreover, since the operator $G(u)=\OpBW{g(u, \tau ;x,\xi)} $ satisfies $\Pi_0^\bot G(u)= G(u)\Pi_0^\bot$, it is a standard fact that also its flow satisfies  $\Pi_0^\bot\Phi^{\tau}_{g}(u)=\Phi^{\tau}_{g}(u)\Pi_0^\bot$. As a consequence $\Phi^{\tau}_{g}(u)$ leaves invariant the space $H^s_0$.
\end{proof}

We set $\Phi_g(u) \defeq \Phi^1_g(u)$ and its inverse $\Phi_g(u)^{-1} : = \left. \Phi^\tau_g(u) \right| _{ \tau = - 1}$.

\subsection{Conjugation by a flow generated by a real symbol of order one} \label{sec:flow-1}

We study the case in which $ g $ is as in \cref{eq:g1}. 
In such setting we recall (see \cite{BD2018}) that, given $ \beta\in\wt\cF^\bR_{1} $ we can define a path of diffeomorphism of $ \bT $ via the transformation 
\begin{equation}
	\label{eq:diffeo}
	\Psi \pare{ u, \tau ;x } \defeq  x + \tau \beta\pare{u; x} \quad \text{with inverse}\quad \Psi^{-1}\pare{ u, \tau;y }  \defeq y + \breve{\beta}\pare{u, \tau;y} \quad \text{where} \   \breve{\beta}\in  \cF\bra{\epsilon_0} .
\end{equation}
We set also 
$\Psi(u;x)\defeq\Psi (u, 1;x)$.

The following are Egorov-type theorems for whose proof we refer the reader to \cite[Lemmas A.4 and A.5]{BFP2018}. 
\begin{proposition}[Conjugations for a transport flow]
	\label{prop:Egorov}
	Let $m\leq 1$, $\varrho>0$,and let $\Phi_g(u)$ be the flow generated by $ g $ as per \Cref{lem:flussoGGG}.
	\begin{enumerate}[{\bf i}]
		
		\item \label{item:Egorovi} {\bf Space conjugation of a para-differential operator:} Let $a \in  \Gamma^{m}\bra{\epsilon_0}$ and
		\begin{equation}\label{eq:amp}
			a^{\pare{m}}\pare{u;x, \xi} \defeq \left.  a\pare{u; y, \xi \ \partial_y\Psi^{-1}\pare{u;y}}\right|_{y=\Psi\pare{u;x}}
			\in  \Gamma^{m}\bra{\epsilon_0}.
		\end{equation}
		Then
		\begin{equation}
			\label{eq:conj_generic_symbol}
			\begin{aligned}
				\Phi_g(u) \circ \OpBW{a\pare{u;x,\xi}} \circ \Phi_g(u)^{- 1} = & \  \OpBW{a^{\pare{m}}\pare{u;x, \xi} 
					+ a^{\pare{m-2}}_{\geq 2 }  \pare{u;x, \xi}
				} + R\bra{u}
				\\
				= & \ \OpBW{a\pare{u;x,\xi} 
					+ a^{\pare{m}}_{\geq 2 }\pare{u;x, \xi}} + R\bra{u},
			\end{aligned} 
		\end{equation} 
		where: 
		\begin{itemize}
			
			\item  $a^{\pare{m-2}}_{\geq 2 }\pare{u;x,\xi}$ and $ a^{\pare{m}}_{\geq 2}\pare{u;x,\xi}$ are  non-homogeneous symbols respectively in $ \Gamma^{m-2}_{\geq 2}\bra{\epsilon_0} $ and $\Gamma^m_{\geq 2}\bra{\epsilon_0}$; 
			
			\item  $ R\bra{u}$ is a smoothing operator in $ \cR^{-\vr }_{\geq 2}\bra{\epsilon_0} $.
		\end{itemize}
		
		In addition: 
		\begin{enumerate}
			
			\item If $ a $ is real then $ a^{\pare{m}}_{\geq 2} $ is real;
			
			\item If $ a\pare{u;x,\xi} = V\pare{u;x} \xi $ for some $ V\in \cF\bra{\epsilon_0} $ (hence $ m=1 $) then in \cref{eq:conj_generic_symbol} $ a^{\pare{ m-2 }}_{\geq 2 }\equiv 0 $  is nil and $ a^{\pare{ m }} \pare{u;x, \xi} = \tilde{V}\pare{u;x} \ \xi $ for a suitable function $ \tilde{V}\in \cF\bra{\epsilon_0} $. 
		\end{enumerate}
		
		\item{\bf Space conjugation of a smoothing remainder:} If $ R(u)$ is a smoothing operator in $ \cR^{-\vr}\bra{\epsilon_0} $ then 
		\begin{equation*}
			\Phi_g \pare{u} \circ R \pare{u} \circ \Phi_g\pare{u}^{-1} = R(u)+ R_{\geq 2 }\pare{u},
		\end{equation*}
		where $ R\in \cR^{-\vr+1}_{\geq 2}\bra{\epsilon_0} $;
		
			\item {\bf Conjugation of $\partial_t$:} If $ u=f $ solution of \Cref{puppa} then
		\begin{equation}\label{detconju}		
				\pa_t \Phi_g\pare{f} \Phi_g\pare{f}^{-1} =  \ii \  \OpBW{   \beta  \pare{ -\ii \ \dot \omega_\alpha (D)f; x }\ \xi+ \ii \ \fV_{\geq 2}\pare{ f;x } \xi } + R \pare{f}, 
		\end{equation}
		where
		\begin{itemize}
			\item $\dot \omega_\alpha (D)$  is the real Fourier multiplier defined in \eqref{omeghino};
			\item $ \fV_{\geq 2}\pare{f;x}$  is a real function in $\cF_{\geq 2}\bra{\epsilon_0}$;
			\item $ R\pare{f}$ is a smoothing operator in $  \cR^{-\vr}\bra{\epsilon_0}$.
		\end{itemize}

	\end{enumerate} 
	\begin{proof}
	\begin{enumerate}[\bf i]
	\item Follows by Lemmas A4 and A5 in \cite{BFP2018}. 
	
	\item
		 We first define the operator 
	\be
	P^\tau(f):= \Phi_g^\tau \pare{u} \circ R \pare{u} \circ \Phi_g^\tau\pare{u}^{-1}.
	\ee
	Then $\Phi_g \pare{u} \circ R \pare{u} \circ \Phi_g\pare{u}^{-1}=P^1(f)$ The operator $P^\tau(f)$ solves the Heisenberg equation 
	$$
	\partial_\tau P^\tau(f)= \comm{G(f,\tau)}{P^\tau(f)}.
	$$
		Moreover, by \eqref{stime flusso elementary}, \eqref{stimettasmoothing} and \eqref{eq:2121}, \eqref{eq:2123} we have 
		$$
		\| P^\tau(f) w \|_{s+\vr }\lesssim_s \| f\|_s\|w\|_{s_0} + \| f\|_{s_0}\|w\|_s, \quad \| G(f,\tau) w \|_{s-1 }\lesssim_s \| f\|_{s_0}\|w\|_s, \quad \forall s\geq s_0.
		$$
		Then 
		$$
		\norm{\left(P^1(f)- R(f)\right) w}_{s+\vr -1}\lesssim \int_0^1 \norm{ \comm{G(f,\tau)}{P^\tau(f)}w}_{s+\vr -1}\di \tau \lesssim_s \| f\|_{s_0}^2\|w\|_s+\|f\|_{s_0}\|f\|_s\|w\|_{s_0}
		$$
		
		\item
		Differentiating \eqref{flusso1para} with respect to time, we get 
		\be \label{conju}
		\pa_t \Phi_g\pare{f} \Phi_g\pare{f}^{-1}= \int_0^1  \Phi_g \pare{f} \left[\Phi_g^{\tau} \pare{f}\right]^{-1} \OpBW{\ii \partial_t g(f,\tau;x,\xi) }   \Phi_g^{\tau} \pare{f}\Phi_g \pare{f}^{-1}\, \di \tau.
		\ee
		 We claim that that 
		\be\label{betamaggiore}
		\partial_t b(f,\tau;x)= \beta(-\ii \dot \omega(D)f;x)+ \beta_{\geq 2}(f,\tau;x), \quad \beta_{\geq 2}(f,\tau ;x)\in \mF_{\geq 2}^{\R}[\epsilon_0].
		\ee
		Indeed, differentiating  $b(f(t),\tau;x)$ with respect to $ t $ and using the equation for $f$ \eqref{puppa}, we get  
		
		\begin{align*}
			\pa_t  b(f,\tau;x)&
			=\beta\pare{-\ii \dot  \omega(D) f  ; x}+\beta\pare{M_{\mathtt{SQG}}\pare{f}f  ; x} - \tau \left[ \frac{\beta\pare{f ; x}\beta_x\pare{X\pare{f}; x}}{(1+\tau \beta_x\pare{f; x })^2}+\frac{\beta_x\pare{f; x}\beta\pare{X\pare{f}; x}}{(1+\tau \beta_x\pare{f; x })}\right].
		\end{align*}
		then \eqref{betamaggiore} follows from the fact that, using Lemma \ref{nuovetto} for each internal compositions, 
		$$
		\beta_{\geq 2}(f,\tau;x):= \beta\pare{M_{\mathtt{SQG}}\pare{f}f; x} - \tau \left[ \frac{\beta\pare{f ; x}\beta_x\pare{X\pare{f}; x}}{(1+\tau \beta_x\pare{f; x })^2}+\frac{\beta_x\pare{f; x}\beta\pare{X\pare{f}; x}}{(1+\tau \beta_x\pare{f; x })}\right]
		$$
		is a function in $\mF_{\geq 2}^{\R}[\epsilon_0]$.
	
	\end{enumerate}

	\end{proof}
\end{proposition}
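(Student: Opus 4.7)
The proof will proceed by analyzing the Heisenberg equation satisfied by the $\tau$-dependent conjugate. For item (i), I would define $P^\tau(u) \defeq \Phi_g^\tau(u) \circ \OpBW{a(u;x,\xi)} \circ \Phi_g^\tau(u)^{-1}$; differentiating and using \eqref{flusso1para} gives
\begin{equation*}
\partial_\tau P^\tau(u) = [\ii\, G(u,\tau),\, P^\tau(u)], \qquad P^0(u) = \OpBW{a(u;x,\xi)}.
\end{equation*}
By the symbolic calculus of \Cref{prop:composition_BW} applied to $G(u,\tau) = \OpBW{b(u,\tau;x)\,\xi}$, the leading term of this commutator equals $\OpBW{\{b\xi,\, a^\tau\}}$ (see \eqref{poibra}), with a symbolic remainder of order $m-2$ and a smoothing contribution of order $-\varrho + m$. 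Consequently the transported symbol $a^\tau$ satisfies, up to lower order, the transport equation
\begin{equation*}
\partial_\tau a^\tau = b(u,\tau;x)\,\partial_x a^\tau - \xi\,\partial_x b(u,\tau;x)\,\partial_\xi a^\tau,
\end{equation*}
which is precisely the Hamilton flow of $g = b\xi$ on phase space. Integrating along characteristics yields the explicit formula \eqref{eq:amp}: positions transform by the diffeomorphism $\Psi^\tau(u;x) = x + \tau\beta(u;x)$ and momenta by the cotangent rescaling $\xi \mapsto \xi\,\partial_y\Psi^{-\tau}$. The subprincipal error is collected into the bilinear symbol $a^{(m-2)}_{\geq 2}$ and a smoothing remainder of order $-\varrho$.

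For the special case $a(u;x,\xi) = V(u;x)\,\xi$ of subitem (i)(b), the Poisson bracket $\{b\xi,\, V\xi\}$ remains linear in $\xi$ and all subleading corrections in the Weyl calculus (which involve second and higher $\xi$-derivatives) vanish identically. Hence the flow closes exactly on symbols of the form $\tilde V(u;x)\xi$, with $\tilde V$ obtained by pulling back $V$ through $\Psi$ and rescaling by $(1+\beta_x)^{-1}$, so that $a^{(m-2)}_{\geq 2} \equiv 0$.

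For item (ii), I would apply the same Heisenberg strategy to $P^\tau(u) \defeq \Phi_g^\tau(u)\,R(u)\,\Phi_g^\tau(u)^{-1}$. Since $R(u) \in \mathcal{R}^{-\varrho}[\epsilon_0]$ and $G(u,\tau)$ is of order $1$, the tame bounds \eqref{stime flusso elementary}, \eqref{stimettasmoothing} and \eqref{eq:2121}--\eqref{eq:2123} imply that $[\ii\, G(u,\tau),\, P^\tau(u)]$ maps $H^s_0$ into $H^{s+\varrho-1}_0$ tamely. Integrating in $\tau \in [0,1]$ yields $\Phi_g(u)\, R(u)\, \Phi_g(u)^{-1} - R(u) \in \mathcal{R}^{-\varrho+1}_{\geq 2}[\epsilon_0]$; the bilinearity in $u$ follows because $\Phi_g^\tau(u) - \Id$ is $O(u)$ in the tame sense of \eqref{stime flusso elementary}. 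For item (iii), differentiating \eqref{flusso1para} in $t$ gives the Duhamel formula
\begin{equation*}
\partial_t \Phi_g(f)\,\Phi_g(f)^{-1} = \int_0^1 \Phi_g(f)\,[\Phi_g^\tau(f)]^{-1}\, \OpBW{\ii\,\partial_t g(f,\tau;x,\xi)}\, \Phi_g^\tau(f)\,\Phi_g(f)^{-1}\,d\tau.
\end{equation*}
I would then compute $\partial_t b(f,\tau;x)$ using $\partial_t f = -\ii\,\dot\omega_\alpha(D)f + M_{\mathtt{SQG}}(f)f$ from \eqref{puppa} together with the chain rule: the linear-in-$f$ contribution is exactly $\beta(-\ii\dot\omega_\alpha(D)f;x)$, while every other piece is at least quadratic in $f$ by \Cref{nuovetto}, producing the function $\fV_{\geq 2}$. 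Conjugating the resulting quantization by $\Phi_g^\tau(f)$ via item (i) contributes only higher-homogeneity and smoothing corrections, yielding \eqref{detconju}.

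The main obstacle throughout is the careful bookkeeping of orders in the iterated symbolic calculus: the flow $\Phi_g$ does not lower the order of any operator, so in item (i) one must exploit the gain of derivatives in each commutator and iterate it finitely many times to push the residual error into $\mathcal{R}^{-\varrho}$, while simultaneously identifying the bilinear piece $a^{(m-2)}_{\geq 2}$. The key simplification making this tractable is the transport structure $g = b\xi$: the Poisson bracket closes on such symbols up to strictly lower orders, which is exactly what permits the characteristic-method solution to give the leading transported symbol in closed form.
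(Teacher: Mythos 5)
Your proposal is correct and follows essentially the same route as the paper: the Heisenberg/Duhamel strategy for items (ii) and (iii) is exactly what the paper does, and for item (i) the paper simply cites Lemmas A.4--A.5 of \cite{BFP2018}, whose content is precisely the characteristics/Egorov argument you sketch (transport of the symbol along the Hamiltonian flow of $g=b\xi$, with the Poisson bracket closing on transport symbols for case (i)(b)). The only cosmetic difference is that you reprove the cited Egorov lemmas rather than invoking them.
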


\subsection{Conjugation by a flow generated by a real symbol of order $  \delta \leq  \alpha $} \label{sec:flow<0}

We study the case in which $ g $ is as in \cref{eq:g2,eq:g3}. 

\begin{proposition}[Conjugation by flows generated by symbols of order smaller than one]
	\label{prop:Egorov2}
	With the same hypothesis of \Cref{prop:Egorov}, let $a \in \Gamma^m \bra{\epsilon_0}$ and $\Phi_g^\tau(u)$ be the flow generated by $ g $ as in \cref{eq:g2,eq:g3} per \Cref{lem:flussoGGG}. One has the following conjugation rules:
	\begin{enumerate}[{\bf i}]

		\item \label{ennesimpeg}{\bf Space conjugation:} \cite[Lemma A.6]{BFP2018} 
		\begin{align}
			&\Phi_g(u) \circ \OpBW{\ii a\pare{u;x, \xi}} \circ \Phi_g(u)^{- 1}=  \ \ii\OpBW{  a\pare{u;x, \xi} + r_{\geq 2 }\pare{u;x, \xi}  } + R_{\geq 2}\pare{u} , \label{eq:conjsyyyymbol} \\
			&\Phi_g(u) \circ \OpBW{-\ii \omega(\xi)}\circ \Phi_g(u)^{- 1}=\OpBW{-\ii \omega(\xi)+ \ii \tilde r(u;x,\xi)}+ R\pare{u}\nonumber
		\end{align}
		where:
		\begin{itemize}
			\item  $  r_{\geq 2}\pare{u;x, \xi}$ is a non-homogeneous symbol in $\Gamma^{m-\ell}_{\geq 2}\bra{\epsilon_0}$, with $\ell:=1-\alpha>0$ (cfr. \eqref{eq:ell});
			\item $  R_{\geq 2}(u)$ is a non-homogeneous smoothing remainder in  $\cR^{-\vr}_{\geq 2 }\bra{\epsilon_0}$;
			
			\item $  \tilde r \pare{u;x, \xi}$ is a symbol in $\Gamma^{\delta-\ell}\bra{\epsilon_0}$;
			\item $  R(u)$ is a non-homogeneous smoothing remainder in  $\cR^{-\vr}\bra{\epsilon_0}$;
			
		\end{itemize}
		Moreover:
		\begin{itemize}
		\item if   $ a(u;x,\xi) $ and $ g(u;x,\xi) $ are real valued (and odd in $ \xi $) then $ r_{\geq 2}(u;x,\xi) $ and $ \tilde r (u;x,\xi)$ are real valued (and odd in $ \xi $) as well;
		\item if   $ \ii a(u;x,\xi) $ and $ \ii g(u;x,\xi) $ are real-to real (cfr. \cref{eq:reality_homogeneous_symbol,areal})   then $ \ii r_{\geq 2}(u;x,\xi) $ and $ \ii \tilde r (u;x,\xi)$ are real-to-real as well.
\end{itemize}		 
		
		\item  {\bf Conjugation of smoothing operators:} Given
		$ R\in  \cR^{-\vr}\bra{\epsilon_0} $ we have that
		\begin{equation*}
			\Phi_g \pare{u} \circ R \pare{u}  \circ \Phi_g\pare{u}^{-1} =R(u)+ R_{\geq 2}(u) 
		\end{equation*}
	where $R_{\geq 2}(u) \in  \cR^{-\vr + \max\set{0, \delta}}_{\geq 2 }\bra{\epsilon_0}$ 
		\item   {\bf Conjugation of $\partial_t$:} If $ u=f $ solution of \Cref{puppa} then
		\begin{equation*}
			\partial_t \Phi_g\pare{f}  \circ \Phi_g\pare{f}^{-1} =   \  \OpBW{  \ii  g\pare{-\ii  \dot{\omega}_\alpha\pare{D} f;x,\xi}  + r_{\geq 2} } + R_{\geq 2} \pare{f}.
		\end{equation*}
		where $r_{\geq 2}$ is a symbol in $ \Gamma^{\delta-\ell}_{\geq 2 }\bra{\epsilon_0}$ and  $ R_{\geq 2}$ is a smoothing remainder in $ \cR^{-\vr}_{\geq 2}\bra{\epsilon_0} $.
		Moreover if $ g$ is real then $ r_{\geq 2} $ is real;
	\end{enumerate} 
	
\end{proposition}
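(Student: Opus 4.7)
The overall strategy is the standard Egorov scheme: view each conjugated object as the time-$1$ value of a one-parameter family satisfying the Heisenberg-type ODE
$$
\partial_\tau \bigl( \Phi_g^\tau(u) \, T(u) \, \Phi_g^\tau(u)^{-1} \bigr) = \bigl[ \ii \OpBW{g(u;x,\xi)} \, , \, \Phi_g^\tau(u) \, T(u) \, \Phi_g^\tau(u)^{-1} \bigr],
$$
and then iterate the symbolic calculus of \Cref{prop:composition_BW} together with \Cref{nuovetto}, exploiting the crucial gain: since $g$ has order $\delta \le \alpha$ and the Poisson bracket drops an order, each commutator step gains $1-\delta \ge 1-\alpha = \ell > 0$ derivatives. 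After finitely many iterations (depending on $\vr$, $m$, and $\ell$) the residual term is absorbed into smoothing remainders.

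For item \ref{ennesimpeg}, I would set $A^\tau(u) := \Phi_g^\tau(u) \OpBW{\ii a(u;x,\xi)} \Phi_g^\tau(u)^{-1}$ and expand by the Duhamel formula
$$
A^1(u) = \OpBW{\ii a} + \int_0^1 \bigl[ \ii \OpBW{g} , A^\tau(u) \bigr] \, d\tau.
$$
By \Cref{prop:composition_BW} the commutator $[\ii \OpBW{g}, \OpBW{\ii a}]$ equals $\OpBW{\{g,a\}}$ plus a symbol of order $m+\delta-3$ plus a smoothing term; since $g$ is homogeneous of degree $1$ in $u$, $\{g,a\}$ lies in $\Gamma^{m+\delta-1}_{\geq 2}[\epsilon_0]$, i.e.\ its order drops by at least $\ell$. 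Substituting this back into Duhamel and iterating $N \sim \vr/\ell$ times produces the claimed expansion with $r_{\geq 2}\in\Gamma^{m-\ell}_{\geq 2}[\epsilon_0]$ and a remainder in $\cR^{-\vr}_{\geq 2}[\epsilon_0]$. The special case of $\OpBW{-\ii \omega_\alpha(\xi)}$ is treated identically, observing that the bracket with a constant-coefficient multiplier reduces to $-\partial_\xi \omega_\alpha(\xi) \, \partial_x g$, which belongs to $\Gamma^{\delta-\ell}[\epsilon_0]$ by \Cref{prop:composition_BW}. Reality/parity preservation follows because the commutator bracket preserves both properties when $\ii g$ and $\ii a$ are real-to-real, a structural check performed at the level of the principal symbols.

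For item (ii), I would run the same Duhamel argument on $P^\tau(u) := \Phi_g^\tau(u) R(u) \Phi_g^\tau(u)^{-1}$. Here $\partial_\tau P^\tau = [\ii \OpBW{g}, P^\tau]$, and Remark \ref{rem:smoo} together with \Cref{nuovetto}\ref{item:nuovetto3} guarantee that the composition of a smoothing operator in $\cR^{-\vr}[\epsilon_0]$ with $\OpBW{g}$ of order $\delta$ is a smoothing operator in $\cR^{-\vr+\max\{0,\delta\}}[\epsilon_0]$ and, since $g$ is linear in $u$, each bracket step produces an operator of homogeneity at least $2$ in $u$. Integrating in $\tau$ yields the required decomposition $P^1(u) = R(u) + R_{\geq 2}(u)$ with $R_{\geq 2}\in \cR^{-\vr+\max\{0,\delta\}}_{\geq 2}[\epsilon_0]$.

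For item (iii), I would differentiate the flow equation \eqref{flusso1para} in $t$ and obtain, exactly as in \eqref{conju},
$$
\partial_t \Phi_g(f) \, \Phi_g(f)^{-1} = \int_0^1 \Phi_g(f) \Phi_g^\tau(f)^{-1} \, \OpBW{\ii \partial_t g(f;x,\xi)} \, \Phi_g^\tau(f) \Phi_g(f)^{-1} \, d\tau.
$$
Because $g(u;x,\xi)$ is linear in $u$, the chain rule gives $\partial_t g(f;x,\xi) = g(\partial_t f; x,\xi)$; substituting $\partial_t f = -\ii \dot \omega_\alpha(D) f + M_{\mathtt{SQG}}(f) f$ from \eqref{puppa} and invoking \Cref{nuovetto} splits this symbol as $g(-\ii \dot \omega_\alpha(D)f ; x,\xi) \in \tilde\Gamma^\delta_1$ plus $g(M_{\mathtt{SQG}}(f)f ; x,\xi) \in \Gamma^\delta_{\geq 2}[\epsilon_0]$. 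Finally, conjugating the resulting paradifferential operator by the bounded flows $\Phi_g(f)\Phi_g^\tau(f)^{-1}$ and reapplying item \ref{ennesimpeg} item by item (to propagate the principal symbol through while generating only quadratic corrections of order $\delta - \ell$ and smoothing remainders), yields the stated formula.

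The main technical obstacle is the bookkeeping in item \ref{ennesimpeg}: one must check that at every iteration the new error symbol remains in the correct class $\Gamma^{\bullet}_{\geq 2}[\epsilon_0]$ (rather than just $\Gamma^\bullet[\epsilon_0]$) so that its order decreases geometrically until it is absorbed in $\cR^{-\vr}_{\geq 2}[\epsilon_0]$, and that the Neumann-type series for $\Phi_g^\tau$ acting on symbols of unbounded order converges in the scale of Sobolev spaces. The sublinearity $\delta \le \alpha < 1$ is precisely what makes this iteration terminate after finitely many steps and is the reason the argument breaks down at the endpoint $\alpha = 1$.
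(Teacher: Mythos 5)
Your proposal is correct and follows essentially the same route as the paper: a Heisenberg/Duhamel iteration exploiting the order gain $\ell=1-\alpha$ for the space conjugations, and differentiation of the flow equation combined with \eqref{puppa} and \Cref{nuovetto} for the conjugation of $\partial_t$. The only real difference is presentational — the paper outsources the bulk of item \ref{ennesimpeg} to the cited \cite[Lemma A.6]{BFP2018} and handles the reality/parity claims by a clean symmetrization $M\mapsto\tfrac12(M+\bar M)$, whereas you sketch the iteration directly and leave the reality check as a structural remark.
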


\begin{proof}
Item ${\bf i}$ is proved in \cite[Lemma A.6]{BFP2018}. We only need to verify the reality conditions \cref{eq:reality_homogeneous_symbol,areal}. We prove such statement for \eqref{eq:conjsyyyymbol} only, the proof for the latter being the same. Let $ M\pare{u} \defeq \OpBW{\ii r_{\geq 2}\pare{u;x, \xi}} + R_{\geq 2}\pare{u} $, $ M\pare{u} $ is indeed real-to-real, hence $ M\pare{u}=\frac{M\pare{u}+\bar{M}\pare{u}}{2} = \OpBW{\ii \ \frac{r_{\geq 2}\pare{u;x, \xi} - \overline{r_{\geq 2}}\pare{u;x, - \xi}}{2}} + \frac{R_{\geq 2}\pare{u}+\overline{R_{\geq 2}}\pare{u}}{2} $, hence with the relabeling $ \ii r_{\geq 2}\pare{u;x, \xi} \leadsto  \ii \ \frac{r_{\geq 2}\pare{u;x, \xi} - \overline{r_{\geq 2}}\pare{u;x, - \xi}}{2} $ and $ R_{\geq 2}\pare{u}\leadsto \frac{R_{\geq 2}\pare{u}+\overline{R_{\geq 2}}\pare{u}}{2} $ we prove the desired claim. 

	The proof of item ${\bf ii}$ is the same of the analogous item  ${\bf ii}$ in Proposition \ref{prop:Egorov}.
	To prove item ${\bf iii}$
	we start from \eqref{conju} and we set 
	$$
	\wt g_{\geq 2}(f;x,\xi):= \partial_t g(f;x,\xi)- g(-\ii \dot \omega(D)f;x,\xi).
	$$
	By \eqref{puppa}, one has 
	$
	\wt g_{\geq 2}(f;x,\xi)= g\left( M_{\mathtt{SQG}}\pare{f}f;x,\xi\right),
	$
	which belongs to $\Gamma_{\geq2}^{\delta}[\epsilon_0]$ thanks to Lemma \ref{nuovetto}.
	
\end{proof}

\subsection{Conjugation by flows generated by linear smoothing operators}

In this section we study the conjugation rules for the flow generated by a linear smoothing operator of the form  
$\Phi_{Q}(u)\defeq \Phi_{Q}^{1}(u)$
where
$\Phi_{Q}^{\tau}(u)$, $\tau\in[-1,1]$ are given by
\begin{align}\label{flusso smoothing}
	\partial_\tau \Phi^\tau_{Q}(u) = Q(u) \circ \Phi^\tau_{Q}(u), 
	&&
	\Phi^0_{Q}(u) = \Id \,,
	&&  Q(u)\in \tilde{{\cal R}}^{-\varrho}_1 \,.
\end{align}
We only state the properties with the flow $\Phi_{Q}(u)$. The proof follows from standard theory for linear ODEs in Banach spaces.

\begin{lemma}\label{well-posflusso2}
	There exists $s_0>0$ such that for any  $u \in C(I;H^{s_0}_0) $
	the problem \eqref{flusso smoothing} admit unique solution  
	$\Phi^{\tau}_{Q}(u)$. Moreover $\Phi^\tau_{Q}(u)- {\rm Id}- \tau Q(u)$ is a $\vr$-smoothing operator in $\mR_{\geq 2}^{-\vr}[r]$ for any $r>0$. In particular $\Phi_{Q}^{\pm\tau} (u) -\uno$ is in $\mR^{-\vr}[r]$ and  for any $s\geq s_0$ there is $\epsilon_0 >0$ such that 
	\begin{equation}\label{flowsmoothestimate}
		\begin{aligned}
			&{ \norm{ \Phi^\tau_{Q}(u)v }_{s}+\norm{ \Phi^{-\tau}_{Q}(u)v }_{s} \lesssim_s  \norm{ v }_{s}+\norm{v}_{s_0} \norm{u}_{s} },
		\end{aligned}
	\end{equation}
	{for any} \ $ u \in { B_{s_0}(I;\epsilon_0)}$,  {and} $v \in C\pare{I;H^{s}(\mathbb{T};\bR)}$ uniformly in $\tau\in[-1,1]$.  
\end{lemma}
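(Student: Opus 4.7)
The plan is to treat the Cauchy problem \eqref{flusso smoothing} as a linear ODE in the Banach space $H^s_0$, whose coefficient $Q(u)$ is, for each fixed $u \in B_{s_0}(I;\epsilon_0)$, a bounded operator on $H^s_0$ (in fact, a smoothing one, mapping into $H^{s+\varrho}_0$). First I would invoke the standard Cauchy--Lipschitz theorem to obtain a unique $\mathcal{C}^1$ solution $\tau\mapsto\Phi^\tau_{Q}(u) \in \mathcal{L}(H^s_0)$; compatibility across different Sobolev exponents follows from uniqueness.

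For the tame estimate \eqref{flowsmoothestimate}, I would first work at the low regularity $s=s_0$: by \eqref{stimettasmoothing} one has $\|Q(u)w\|_{s_0}\lesssim \|u\|_{s_0}\|w\|_{s_0}$, so Grönwall applied to the integral form of \eqref{flusso smoothing} gives $\|\Phi^\tau_{Q}(u)v\|_{s_0}\lesssim e^{C\|u\|_{s_0}}\|v\|_{s_0}\lesssim \|v\|_{s_0}$ provided $\|u\|_{s_0}\leq \epsilon_0$ is sufficiently small. Feeding this back in at general $s\geq s_0$,
\begin{equation*}
\|\Phi^\tau_{Q}(u)v\|_s\leq \|v\|_s + C\int_0^\tau\bigl(\|u\|_s\|\Phi^{\tau'}_{Q}(u)v\|_{s_0}+\|u\|_{s_0}\|\Phi^{\tau'}_{Q}(u)v\|_s\bigr)\,d\tau',
\end{equation*}
and a second application of Grönwall produces the bound $\|\Phi^\tau_{Q}(u)v\|_s\lesssim_s \|v\|_s+\|u\|_s\|v\|_{s_0}$, which is \eqref{flowsmoothestimate}. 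The same estimate for the inverse $\Phi^{-\tau}_{Q}(u)$ follows identically from the relation $\partial_\tau \Phi^{-\tau}_{Q}(u) = -\Phi^{-\tau}_{Q}(u)Q(u)$.

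To prove that $\Phi^\tau_{Q}(u)-\mathrm{Id}-\tau Q(u)$ belongs to $\mathcal{R}^{-\varrho}_{\geq 2}[r]$, I would iterate the Duhamel formula once and write
\begin{equation*}
\Phi^\tau_{Q}(u)v = v + \tau Q(u)v + \int_0^\tau\!\!\int_0^{\tau'} Q(u)^2\,\Phi^{\tau''}_{Q}(u)v\,d\tau''\,d\tau'.
\end{equation*}
Applying \eqref{stimettasmoothing} twice, together with the flow bound just proved, a straightforward counting of the factors of $u$ yields
\begin{equation*}
\bigl\|Q(u)^2\,\Phi^{\tau''}_{Q}(u)v\bigr\|_{s+\varrho}\lesssim_s \|u\|_{s_0}^2\|v\|_s + \|u\|_{s_0}\|u\|_s\|v\|_{s_0},
\end{equation*}
which is exactly the tame bound \eqref{piove} defining $\mathcal{R}^{-\varrho}_{\geq 2}[r]$. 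Since $\tau Q(u)\in\tilde{\mathcal{R}}_1^{-\varrho}$ for $\tau\in[-1,1]$, the decomposition \eqref{maps} then gives $\Phi^{\pm\tau}_{Q}(u)-\mathrm{Id}\in\mathcal{R}^{-\varrho}[r]$; the reality condition $\overline{\Phi^\tau_Q(u)}=\Phi^\tau_Q(u)$ is preserved at each step because $Q(u)$ is real.

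The only delicate point in the argument is verifying that the bookkeeping of the $\|u\|_{s_0}$ and $\|u\|_s$ factors in the bound for $Q(u)^2\Phi^{\tau''}_{Q}(u)v$ produces exactly the structure required by \eqref{piove}: intermediate terms of the form $\|u\|_{s_0}^2\|u\|_s\|v\|_{s_0}$ naturally appear, but these are cubic in $u$ and hence absorbable into $\|u\|_{s_0}\|u\|_s\|v\|_{s_0}$ by smallness of $\|u\|_{s_0}\leq\epsilon_0$. All other steps are routine ODE and tame-estimate manipulations.
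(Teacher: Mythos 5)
Your proposal is correct and follows essentially the same route as the paper: both treat \eqref{flusso smoothing} as an autonomous linear ODE in $H^s_0$, use the tame bound \eqref{stimettasmoothing} iteratively, and isolate $\mathrm{Id}+\tau Q(u)$ so that the remainder satisfies \eqref{piove}. The only (cosmetic) difference is that the paper sums the explicit exponential series $\sum_k \tau^{k+2}Q^{k+2}(u)/(k+2)!$ to control the quadratic remainder, whereas you obtain the flow bound first by Gr\"onwall and then use a single Duhamel iteration; note also that smallness of $\|u\|_{s_0}$ is not actually needed for the remainder to lie in $\mR^{-\vr}_{\geq 2}[r]$, since the Gr\"onwall constant may simply depend on $r$.
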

\begin{proof}
	First of all there exist $s_0>0$ such that $ Q(u)\in \mL(H^s_0,H^s_0)$ for any $s\geq s_0$ and $u\in H^{s}_0$ (see \eqref{stimettasmoothing}). Then the well-posedness of $ \Phi^\tau_{Q}(u)$,  follows by standard ODE's theory in Banach spaces (see also Lemma A.3 in \cite{BFP2018}), and  $\Phi^\tau_{Q}(u)^{-1}=\Phi^{-\tau}_{Q}(u)$ since the ODE is autonomous. Moreover one has the Taylor's expansion 
	$$
	\Phi^\tau_{Q}(u)= {\rm Id}+ \tau Q(u)+\tau^2\sum_{k=0}^\infty\frac{\tau^{k} }{(k+2) !}Q^{k+2}(u).
	$$
	 Applying iteratively  \eqref{stimettasmoothing} to the smoothing operator $Q(u)$, we get that there is $ L_s>0$ such that 
	  $$
	 \norm{ Q^{k+2}(u)v}_{s+\vr}\leq L_s^{k+2} \left( \norm{u}_{s_0}^{k+2} \norm{v}_{s } + \norm{u}_{s_0}^{k+1} \norm{u}_{s} \norm{v}_{s_0} \right).
	 $$
	Then, for any $r>0$ and $u\in B_{s_0}(I;r)$, we deduce that  
	\begin{align}
	\norm{\Phi^\tau_{Q}(u)- {\rm Id}- \tau Q(u)}_{s+\vr}&\leq \tau^2 \sum_{k=0}^\infty\frac{(L_s\, r)^k }{(k+2) !} \left(\norm{u}_{s_0}^2 \norm{v}_{s}+\norm{u}_{s_0} \norm{u}_s\norm{v}_{s_0}\right)  \notag\\
	& \leq e^{L_s\, r } \left( \norm{u}_{s_0}^2 \norm{v}_{s } + \norm{u}_{s_0}\norm{u}_{s}\norm{v}_{s_0} \right)\label{nuovatay}
	\end{align}
	which, in view of \eqref{piove}, proves that $\Phi^\tau_{Q}(u)- {\rm Id}- \tau Q(u)$ is a $\vr$-smoothing operator in $\mR_{\geq 2}^{-\vr}[\epsilon_0]$.
	Then  the smoothing estimates in \eqref{nuovatay} and \eqref{stimettasmoothing} for $Q(u)$ imply \eqref{flowsmoothestimate} for any $u\in B_{s_0}(I;\epsilon_0)$ for a sufficiently small $\epsilon_0:=\epsilon_0(s)>0$.
\end{proof}

We set $\Phi_{Q}(u) \defeq\Phi_{Q}^1(u)$. Its inverse is given by 
$\Phi_{Q}(u)^{- 1} = \left. \Phi^\tau_{Q}(u)\right|_{ \tau = - 1}$. \\

\begin{prop}[Conjugation by flows generated by smoothing operators]\label{prop:Egorov_smoothing}
	Let $m\in \bR$, $\varrho, \vr' >0$,  $Q(u) \in \tilde{\mathcal{R}}^{-\varrho}_1$ and let $\Phi_Q (u)$ be the flow generated by $ Q $ as per \Cref{well-posflusso2}. 
	Then the following holds:
	
	\begin{enumerate}[\bf i ]
		\item {\bf Space conjugation:} \label{item:Egorov_smoothingi} If $ a\in  \Gamma^m\bra{\epsilon_0} $,   then 
		\begin{align}
			\Phi_{Q}(u)\circ \OpBW{a\pare{u;x,\x}} \circ  \Phi_{Q}(u)^{-1}
			-  \OpBW{a \pare{u;x,\x}}    &\in  \cR^{-\vr + \max\{m,0\}}_{\geq 2 }\bra{\epsilon_0},\label{eq:conj_symbol_smoothing1}\\
			\Phi_{Q}(u)\circ  \omega(D) \circ  \Phi_{Q}(u)^{-1}- \pare{ \omega(D)+\comm{Q\pare{f}}{ \omega(D)} } & \in  \cR^{-\vr + \alpha}_{\geq 2 }\bra{\epsilon_0}\label{eq:conj_symbol_smoothing2}
		\end{align}
		
		\item {\bf Conjugation of smoothing operators:} \label{item:Egorov_smoothingii}If $ R \in  \cR^{-\vr'}\bra{\epsilon_0}  $, then 
		\be \label{bottadevita}
		\Phi_{Q}(u)\circ R\pare{u}  \circ  \Phi_{Q}(u)^{-1} - R\pare{u} \in  \cR^{-\min\{\vr,\vr'\}}_{\geq 2 }\bra{\epsilon_0}.
		\ee
		\item \label{item:Egorov_smoothing_iii} {\bf Conjugation of $\partial_t$:} If $ u=f $ solution of \Cref{puppa} then
		\begin{equation}\label{claimed}
			\partial_t   \Phi_{Q}\pare{f}\circ \Phi_{Q}\pare{f}^{- 1} -  \  Q\pare{ -\ii\dot{\omega}_\alpha\pare{D} f} \in  \cR^{-\vr+1}_{\geq  2}\bra{\epsilon_0} \ . 
		\end{equation}
		
	\end{enumerate}
\end{prop}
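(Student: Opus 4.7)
The plan is to base everything on the Heisenberg identity for the flow \eqref{flusso smoothing}. For any operator $A$, setting $B^\tau(u) := \Phi^\tau_{Q}(u) A \Phi^\tau_{Q}(u)^{-1}$ yields $\partial_\tau B^\tau = [Q(u), B^\tau]$ with $B^0 = A$, which on integration gives the Duhamel expansion
$$
\Phi_{Q}(u)\, A\, \Phi_{Q}(u)^{-1} = A + [Q(u), A] + \int_0^1\!\int_0^\tau \bigl[Q(u),\, [Q(u), B^s(u)]\bigr]\, ds\, d\tau.
$$
All three items will follow by choosing $A$ appropriately and controlling the (iterated) commutators via Lemma \ref{nuovetto} and the composition bullets of Remark \ref{rem:smoo}.

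For item \textbf{i}, take $A = \OpBW{a(u;x,\xi)}$ with $a \in \Gamma^m[\epsilon_0]$. Since $Q(u)$ is a degree-one smoothing operator of order $-\varrho$ and $\OpBW{a}$ has order $m$ and is at least linear in $u$, the commutator $[Q(u), \OpBW{a}]$ has order $-\varrho + \max\{m,0\}$ (a loss of $m$ derivatives when $m>0$ cannot be recovered by $Q$) and homogeneity $\geq 2$; the iterated commutator in the Duhamel remainder is of the same order and higher homogeneity, proving \eqref{eq:conj_symbol_smoothing1}. For \eqref{eq:conj_symbol_smoothing2}, $A = \omega(D)$ is $u$-independent, so the degree-one contribution $[Q(u), \omega(D)]$ must be pulled out explicitly; since $\omega \in \tilde\Gamma_0^\alpha$, each further commutator with $Q$ gains $\varrho - \alpha$ derivatives, leaving a remainder of order $-\varrho+\alpha$ and homogeneity $\geq 2$. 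Item \textbf{ii} is analogous with $A = R(u) \in \cR^{-\varrho'}[\epsilon_0]$: the commutator of two smoothing operators is again smoothing, of order $-\min\{\varrho,\varrho'\}$ by the fifth bullet of Remark \ref{rem:smoo}, and of homogeneity $\geq 2$ because $Q(u)$ vanishes at $u=0$.

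For item \textbf{iii}, differentiate $\partial_\tau \Phi_Q^\tau(f) = Q(f)\,\Phi_Q^\tau(f)$ in $t$. The map $\Psi^\tau := \partial_t \Phi_Q^\tau(f)$ satisfies $\partial_\tau \Psi^\tau - Q(f)\Psi^\tau = Q(\partial_t f)\,\Phi_Q^\tau(f)$ with $\Psi^0 = 0$; by variation of parameters and the $\tau$-autonomy of the generator,
$$
\partial_t \Phi_Q(f)\, \Phi_Q(f)^{-1} = \int_0^1 \Phi_Q^{1-s}(f)\, Q(\partial_t f)\, \bigl[\Phi_Q^{1-s}(f)\bigr]^{-1}\, ds.
$$
Using \eqref{puppa}, $\partial_t f = -\ii \dot\omega_\alpha(D) f + M_{\mathtt{SQG}}(f)\,f$, and the linearity of $Q$ in its first entry, split $Q(\partial_t f) = Q(-\ii \dot\omega_\alpha(D) f) + Q(M_{\mathtt{SQG}}(f)\,f)$. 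By Lemma \ref{nuovetto}(2) the second summand lies in $\mR^{-\varrho+1}_{\geq 2}[\epsilon_0]$, while the first is a homogeneous smoothing operator of order $-\varrho+\alpha$; conjugating this degree-one operator by $\Phi_Q^{1-s}(f)$ gives, by item \textbf{ii} just proved, $Q(-\ii\dot\omega_\alpha(D) f)$ plus a remainder in $\cR^{-\varrho+\alpha}_{\geq 2}[\epsilon_0] \subset \cR^{-\varrho+1}_{\geq 2}[\epsilon_0]$ (since $\alpha < 1$), and the $s$-integral of the constant (in $s$) term $Q(-\ii\dot\omega_\alpha(D) f)$ returns itself, yielding \eqref{claimed}.

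The main technical obstacle is the careful bookkeeping of homogeneity-in-$u$ degree versus smoothing order at each commutator step. In item \textbf{i} one must distinguish between the two equations, since only the $u$-independent $\omega(D)$ requires the explicit extraction of the linear-in-$u$ correction $[Q,\omega(D)]$; in item \textbf{iii}, one must verify that the decomposition of the vector field via \eqref{puppa}, combined with Lemma \ref{nuovetto}(2), routes the quasilinear component into a genuinely $\geq 2$-homogeneous smoothing remainder of order $-\varrho+1$. Throughout we tacitly assume $\varrho$ large enough for the conditions $\varrho > m$ in Lemma \ref{nuovetto}(3) to hold at every composition.
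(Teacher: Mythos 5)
Your proof is correct and follows essentially the same route as the paper: for items \textbf{i} and \textbf{ii} the paper expands the conjugation using the smoothing of $\Phi_Q^{\pm 1}-\Id$ (and, for \eqref{eq:conj_symbol_smoothing2}, the same second-order Taylor/Duhamel expansion you use), while for item \textbf{iii} it differentiates the flow ODE in $t$ to obtain exactly your integral formula and then splits $Q(\partial_t f)$ via \eqref{puppa} and Lemma \ref{nuovetto}. The only cosmetic difference is that you phrase the zeroth/first-order expansions through the Heisenberg commutator rather than through $\Phi_Q-\Id$; both rest on the same composition lemmas and give the same orders and homogeneities.
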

\begin{proof}

\begin{enumerate}[i]

\item We prove \eqref{eq:conj_symbol_smoothing1} first. Denoting $ A\pare{u}\defeq \OpBW{a\pare{u;x, \xi}} $ we have indeed that
\begin{equation*}
\begin{aligned}
\Phi_Q\pare{u}\circ A\pare{u} \circ \Phi_Q\pare{u}^{-1} = & \ A\pare{u} + R_A\pare{u}
\\
R_A\pare{u} \defeq & \ A\pare{u}\circ \pare{\Phi^{-1}_Q\pare{u}-\uno} + \Big( \Phi_Q(u)- {\rm Id} \Big) \circ A(u) \circ \Phi_Q(u)^{- 1}, 
\end{aligned}
\end{equation*}
hence the fact that $ R_A\pare{u}\in \cR^{-\vr + \max\{m,0\}}_{\geq 2 }\bra{\epsilon_0} $ follows from a repeated application of \Cref{nuovetto}, \cref{item:nuovetto3} and \Cref{rem:smoo}, the last bullet. \\
We prove now \eqref{eq:conj_symbol_smoothing2}. By a Taylor expansion we have that
\begin{multline}\label{eq:conj_symbol_smoothing3}
\Phi_{Q}(u)\circ \omega(D) \circ  \Phi_{Q}(u)^{-1}
\\
=
\omega\pare{D} + \comm{Q\pare{u}}{\omega\pare{D}}
+
\int_0^1 \pare{1-\tau} \Phi_Q^\tau\pare{u} \comm{Q\pare{u}}{\comm{Q\pare{u}}{\omega\pare{D}}} \Phi_Q^{-\tau}\pare{u} \ \dd \tau. 
\end{multline}
It is immediate that $ \comm{Q\pare{u}}{\comm{Q\pare{u}}{\omega\pare{D}}} \in \cR^{-\vr+\alpha}_{\geq 2}\bra{\epsilon_0} $, next being $ \Phi_Q^{-\tau}\pare{u} $ and $ \Phi_Q^{\tau}\pare{u} $ bounded operators in Sobolev spaces uniformly in $ \av{\tau}\leq 1 $ (cf. \cref{flowsmoothestimate}) we obtain that the integral term in \cref{eq:conj_symbol_smoothing3} is a smoothing operator in $ \cR^{-\vr}_{\geq 2}\bra{\epsilon_0}  $, this concludes the proof of \eqref{eq:conj_symbol_smoothing2}.

\item Follows the same lines as in the proof of \eqref{eq:conj_symbol_smoothing1}, with the exception that we have to use repeatedly \Cref{rem:smoo}, the last bullet, instead of  \Cref{nuovetto}, \cref{item:nuovetto3}. 

\item

	 Differentiating with respect to $t$ the equation \eqref{flusso smoothing} and using that $f$ solves \eqref{puppa}, we get
	$$
	\partial_t   \Phi_{Q}\pare{f}\circ \Phi_{Q}\pare{f}^{- 1}=\int_0^1  \Phi_{Q}^\tau\pare{f} Q\left(X\pare{f}\right)  \Phi_{Q}^{-\tau}\pare{f}\, \di \tau.
	$$
	Next, thanks to Lemma \ref{nuovetto}, we have that $Q\left(X\pare{f}\right)$ is a smoothing remainder in $\mR^{-\vr +1}[\epsilon_0]$. Moreover, by \eqref{bottadevita} and  \eqref{puppa} we have 
	$$
	\int_0^1  \Phi_{Q}^\tau\pare{f} Q\left(X\pare{f}\right)  \Phi_{Q}^{-\tau}\pare{f}\, \di \tau= Q(X(f))+ R_{\geq 2}(f)= Q(-\ii \dot \omega(D)f )+ Q(M_{\mathtt{SQG}}(f))+ R_{\geq 2}(f)
	$$
	where $ R_{\geq 2}$ is in $\mR_{\geq 2}^{-\vr}[\epsilon_0]$ 
	and applying again Lemma \ref{nuovetto} to $Q(M_{\mathtt{SQG}}(f))$ we get the claimed expansion \eqref{claimed}.

\end{enumerate}
\end{proof}

\appendix

\begin{footnotesize}

\end{footnotesize}

\end{document}